\theoremstyle{definition}
\newcommand{\disp}{\displaystyle}
\newcommand{\C}{\mathbb{C}}
\newcommand{\Q}{\mathbb{Q}}
\newcommand{\Z}{\mathbb{Z}}
\newcommand{\G}{\mathcal{G}}
\newcommand{\x}{\textbf{\textit{x}}}
\renewcommand{\phi}{\varphi}
\newtheorem*{theorem*}{Theorem}
\newtheorem{theorem}{Theorem}[section]
\newtheorem{lemma}[theorem]{Lemma}
\newtheorem{example}[theorem]{Example}
\newtheorem{corollary}[theorem]{Corollary}
\newtheorem{remark}[theorem]{Remark}
\newtheorem{proposition}[theorem]{Proposition}
\newtheorem{definition}[theorem]{Definition}
\newcommand{\M}{\mathcal{M}}
\newcommand{\F}{\mathcal{F}}
\renewcommand{\P}{\mathcal{P}}
\newcommand{\Gr}{\mathrm{Gr}}
\newcommand{\FR}{\mathbb{FR}}
\newcommand{\slk}{\mathbb{SL}_k}
\title{$SL_k$-Tilings and Paths in $\Z^k$}
\author{Zachery Peterson}
\address{College of William \& Mary, Department of Mathematics, Jones Hall, 200 Ukrop Way, Williamsburg, VA 23187-8795, USA }
\author{Khrystyna Serhiyenko}
\address{University of Kentucky, Lexington, Department of Mathematics, 951 Patterson Office
Tower, Lexington, KY 40506-0027, USA}
\begin{document}

	\begin{abstract}
	An $SL_k$-tiling is a bi-infinite array of integers having all adjacent $k\times k$
minors equal to one and all adjacent $(k+1)\times (k+1)$ minors equal to zero.  Introduced and studied by Bergeron and Reutenauer, $SL_k$-tilings generalize the notion of Conway-Coxeter frieze patterns in the case $k=2$. In a recent paper, Short showed a bijection between bi-infinite paths of reduced rationals in the Farey graph and $SL_2$-tilings.  We extend this result to higher $k$ by constructing a bijection between $SL_k$-tilings and certain pairs of bi-infinite strips of vectors in $\Z^k$ called paths.  The key ingredient in the proof is the connection to Pl\"ucker friezes and Grassmannian cluster algebras.  As an application, we obtain results about periodicity, duality, and positivity for tilings.

\end{abstract}
\maketitle
	\section{Introduction}\label{sec:introduction}

A \textit{frieze} is a bi-infinite array of integers satisfying the so-called diamond rule.
    Friezes were first introduced and studied in the case $k=2$ by Conway and Coxeter in 1970's \cite{cc73}\cite{cc732}.
    They proved various facts about periodicity and symmetry of friezes and, in particular, that there is a bijection between $SL_2$-friezes over positive integers, often called \textit{Conway-Coxeter friezes}, and triangulations of polygons.
    Later, the discovery of cluster algebras in 2000's \cite{fz02} created a newfound interest in friezes, as  cluster algebras of type A are closely tied to triangulations of polygons. 
    Moreover, additive categorification of cluster algebras yields another important relation between friezes and representation theory of quivers, first shown in \cite{cc06}.
    Subsequent connections were also discovered between frieze patterns and Farey graphs, cross-ratios, and continued fractions, see the survey \cite{mg15} and references therein.
    From this foundation came a series of generalizations of $SL_2$-friezes that lead to a whole body
of work on the topic in the case $k= 2$. Some examples include friezes coming from triangulations of
annuli or once-punctured disks \cite{BM09, BPT16}, friezes with entries in other rings \cite{CHJ, HJ20}, friezes satisfying
variants of the diamond rule \cite{CJ21, FL21, MGOT15}, and mesh friezes coming from quiver representations \cite{bfgst21}.  

In another direction, the notion of $SL_2$-friezes was extended to higher values of $k$, which can be thought of as dimension.
Subsequently, $SL_k$-friezes were found to be related to linear difference equations, the
Gale duality, and $T$-systems from physics, see \cite{mgost13}.  Furthermore, these higher friezes are  closely connected to the coordinate ring of the Grassmannian $\Gr(k, n)$.
    The Grassmannian $\Gr(k, n)$ is a projective variety via the Pl\"ucker embedding, whose homogeneous coordinate ring is one of the first and most well-known examples of cluster algebras \cite{s03}.
    It was shown in \cite{bfgst21} that one may instead use certain Pl\"ucker coordinates as entries in an $SL_k$-frieze, and the Pl\"ucker relations that they satisfy yield the desired diamond rule, the defining property of a frieze.  These Pl\"ucker friezes can be thought of as universal friezes, since any frieze with integer values appears as an evaluation of a Pl\"ucker frieze at a particular element of the Grassmannian. 
    Later, a generalization of $SL_k$-friezes was studied in relation to juggling functions and positroid varieties in \cite{dm22}. 
    In addition, recently \cite{c23} extended the work on triangulations of subpolygons and friezes beyond the two-dimensional case.

    In another direction, Bergeron and Reutenauer \cite{br10} further generalized friezes to $SL_k$-tilings, infinite arrays $\M=(m_{i\,j})_{i,j\in\Z}$ where the determinant of every $k\times k$ submatrix equals $1$.
    These tilings are called tame if every larger adjacent minor has a determinant of $0$.
    Additionally, they showed how one could construct a tiling from a frieze by rotating the frieze and extending it infinitely to the plane. 
    In their exploration of tame $SL_k$-tilings they show that tilings may be represented using their so-called linearization data.
    In the case of two-dimensional tilings, Short \cite{s22} relates this linearization data to sequences of rational numbers, and proves a bijection between $SL_2$-tilings and pairs of paths in the Farey graph.
    Further restrictions of this map give geometric interpretations for positive and periodic $SL_2$-tilings, as well as tilings from $SL_2$-friezes.
    This bijection was later extended to the 3D Farey graph by considering tilings over Eisenstein integers \cite{fkst23}, as well as entries in the field $\Z/n\Z$ \cite{svsz24}.

    In summary, there has been a lot of work in the case $k=2$ as well as some more recent developments in the case of $SL_k$-friezes for higher values of $k$.  In this paper, we study $SL_k$-tilings for $k\geq 2$, which are much less understood.
	We consider them from the perspective of linear algebra and obtain a generalization of Short's result \cite{s22}.
	Lacking the connection to the geometry of the Farey graph as in the case $k=2$, we introduce a new algebraic notion of a path in $\Z^k$.
	Let $\gamma=\{\gamma_i\}_{i\in\Z}$ be a bi-infinite strip of $k$-column vectors $\gamma_i\in\Z^k$ with the property that the matrix $\left(\gamma_i,\ldots,\gamma_{i+k-1} \right)$, whose columns are $k$ consecutive entries of $\gamma$ has determinant 1.
	We denote the set of all such strips $\P_k$ and we call $\gamma$ a \textit{path}.
    Additionally, we define the notion of multiplication of a path by a matrix $A\in SL_k(\Z)$ as
    \[
    A\gamma=(\cdots, A\gamma_1,A\gamma_2,A\gamma_3,\cdots)\in \P_k.
    \]
	Let $\slk$ denote the set of all tame $SL_k$-tilings, then our main result can be stated as follows. 
		
	\begin{theorem}
		The map $\Phi$ given by
		\begin{align*}
		\Phi:(\P_k\times\P_k)/SL_k(\Z)&\rightarrow\slk\\
		(\gamma,\delta)&\mapsto \M=(m_{i\,j})_{i,j\in\Z},
		\end{align*} 
		where $m_{i\,j}=\det(\gamma_i,\ldots,\gamma_{i+k-2},\delta_j)$ is a bijection between tame $SL_k$-tilings and pairs of paths modulo the diagonal action by $SL_k(\Z)$.
	\end{theorem}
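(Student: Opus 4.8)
The plan is first to check that $\Phi$ is well defined on $SL_k(\Z)$-orbits, and then to build an explicit two-sided inverse, which yields surjectivity and injectivity on orbits in one stroke. That $\Phi$ descends to the quotient is immediate: for $A\in SL_k(\Z)$ one has $\det(A\gamma_i,\dots,A\gamma_{i+k-2},A\delta_j)=\det A\cdot\det(\gamma_i,\dots,\gamma_{i+k-2},\delta_j)=\det(\gamma_i,\dots,\gamma_{i+k-2},\delta_j)$. To see that $\M=\Phi(\gamma,\delta)$ is a tame $SL_k$-tiling, I would attach to each $i$ the vector $\gamma_i^\vee\in\Z^k$ characterized by $\langle\gamma_i^\vee,x\rangle=\det(\gamma_i,\dots,\gamma_{i+k-2},x)$ for all $x$ (the generalized cross product of $\gamma_i,\dots,\gamma_{i+k-2}$), so that $m_{ij}=\langle\gamma_i^\vee,\delta_j\rangle$. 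Every adjacent $(k+a)\times(k+a)$ window of $\M$ then factors as $V^{\mathsf{T}}D$, where $V$ and $D$ are the $k\times(k+a)$ matrices whose columns are the relevant consecutive $\gamma^\vee$'s and $\delta$'s; for $a\ge 1$ this product has rank at most $k$, so its determinant is $0$ (this gives tameness, in particular the vanishing of adjacent $(k+1)$-minors), while for $a=0$ the determinant equals $\det V\cdot\det D$ with $\det D=1$ because $\delta$ is a path. Equivalently, the $m_{ij}$ are Pl\"ucker coordinates of the bi-infinite matrix obtained by concatenating the columns of $\gamma$ and of $\delta$, and the tiling identities are instances of the Pl\"ucker relations in the Grassmannian cluster algebra; I expect this viewpoint to also be what drives the stated applications.

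What remains for well-definedness is a \emph{duality lemma}: the map $\gamma\mapsto\gamma^\vee:=(\gamma_i^\vee)_i$ carries $\P_k$ into $\P_k$ and is a bijection, so in particular $\det V=\det(\gamma_i^\vee,\dots,\gamma_{i+k-1}^\vee)=1$. For the determinant I would note that $\langle\gamma_{i+a}^\vee,\gamma_{i+k-1+b}\rangle=\det(\gamma_{i+a},\dots,\gamma_{i+a+k-2},\gamma_{i+k-1+b})$ vanishes for $0\le b<a$ (a repeated column) and equals $1$ for $b=a$ (an adjacent $k$-window); hence the $k\times k$ matrix with rows $(\gamma_i^\vee)^{\mathsf{T}},\dots,(\gamma_{i+k-1}^\vee)^{\mathsf{T}}$, multiplied by the matrix $(\gamma_{i+k-1},\dots,\gamma_{i+2k-2})$ of determinant $1$, is upper triangular with unit diagonal, forcing $\det(\gamma_i^\vee,\dots,\gamma_{i+k-1}^\vee)=1$. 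For bijectivity, the vector $\gamma_b$ is orthogonal to the $k-1$ consecutive vectors $\gamma_{b-k+2}^\vee,\dots,\gamma_b^\vee$, which are independent by the determinant just computed, so $\gamma_b$ is their cross product up to a scalar; writing out the same vanishing/unit pairings shows this scalar is forced by the path condition, so $\vee$ applied twice returns $\gamma$ up to an index shift and a harmless sign, whence $\vee$ is invertible.

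For the inverse map I start from a tame tiling $\M$ and first extract a \emph{linear recurrence of order $k$ on columns}: consecutive columns $c_j,\dots,c_{j+k}$ restricted to $k+1$ consecutive rows are dependent (the adjacent $(k+1)$-minor is $0$), the dependency is unique up to scale and has nonzero coefficients on $c_j$ and $c_{j+k}$ (adjacent $k$-minors equal $1$), and comparing overlapping row-windows shows it is independent of the chosen rows — so it holds between the full bi-infinite columns, and iterating shows every column lies in the $k$-dimensional space $W=\operatorname{span}(c_0,\dots,c_{k-1})$. Writing $c_j=\sum_{l=0}^{k-1}(\delta_j)_l c_l$ and using that the submatrix of $\M$ on rows and columns $0,\dots,k-1$ lies in $SL_k(\Z)$, Cramer's rule gives $\delta_j\in\Z^k$, while $\det(\delta_j,\dots,\delta_{j+k-1})$ equals the adjacent $k$-minor of $\M$ on rows $0,\dots,k-1$ and columns $j,\dots,j+k-1$, namely $1$; hence $\delta:=(\delta_j)\in\P_k$. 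Setting $v_i:=(m_{i0},\dots,m_{i,k-1})^{\mathsf{T}}$, the matrix with columns $v_i,\dots,v_{i+k-1}$ is the transpose of the adjacent $k\times k$ submatrix of $\M$ on rows $i,\dots,i+k-1$ and columns $0,\dots,k-1$, of determinant $1$, so $v:=(v_i)\in\P_k$; let $\gamma\in\P_k$ be its preimage under $\vee$, so $\gamma_i^\vee=v_i$. Then $\det(\gamma_i,\dots,\gamma_{i+k-2},\delta_j)=\langle v_i,\delta_j\rangle=\sum_l m_{il}(\delta_j)_l=m_{ij}$, the last step being the $i$-th coordinate of $c_j=\sum_l(\delta_j)_l c_l$, so $\Phi([(\gamma,\delta)])=\M$. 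This is surjectivity. For injectivity, given any $(\gamma,\delta)$ with $\Phi([(\gamma,\delta)])=\M$, replace it by $(B^{-1}\gamma,B^{-1}\delta)$, $B=(\delta_0,\dots,\delta_{k-1})\in SL_k(\Z)$ — the same orbit, the same image — so that $\delta_l=e_l$ for $0\le l\le k-1$; then $m_{il}=\langle\gamma_i^\vee,e_l\rangle$ forces $\gamma_i^\vee=v_i$, and expansion of $c_j$ in the basis $(c_0,\dots,c_{k-1})$ forces $\delta_j$ to be the coordinate vector above, so the normalized representative of the orbit is read off $\M$ (using injectivity of $\vee$ to recover $\gamma$ from $\gamma^\vee$); hence two preimages of $\M$ lie in one orbit.

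The step I expect to be the main obstacle is the extraction of the order-$k$ column recurrence from tameness, together with the integrality statements it underpins: this is where the hypotheses ``all adjacent $k$-minors are $1$, all larger adjacent minors are $0$'' really enter, and, unlike the tiling property itself, it is not the consequence of a single determinant identity. The duality lemma is the other key input, and — as the abstract signals — recognizing the entries $m_{ij}$ as Pl\"ucker coordinates is precisely what makes both that lemma and the tiling property transparent, and is the bridge to the Grassmannian cluster algebra exploited in the applications.
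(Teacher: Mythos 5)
Your proposal is correct, but it takes a genuinely different and more self-contained route than the paper. Where you prove well-definedness by the rank factorization $V^{\mathsf T}D$ together with a duality lemma for the cross-product path $\gamma^\vee$ (the triangularity argument pairing $\gamma_{i+a}^\vee$ against $\gamma_{i+k-1+b}$ is exactly right), the paper instead embeds every finite window of $\Phi(\gamma,\delta)$ into a tiling coming from a Pl\"ucker frieze: it splices $\gamma$ and $\delta$ into a single skew-periodic path $\phi(\gamma,\delta)$ (using that $J$ matrices generate $SL_k(\Z)$, Lemma~\ref{generators}) and then invokes the theorem of \cite{bfgst21} that Pl\"ucker friezes satisfy the diamond rules. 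Likewise, your inverse is built from the order-$k$ column recurrence (which is \cite[Lemma 2]{br10}, re-derived in your sketch by the overlapping-row-window argument) plus invertibility of $\vee$, whereas the paper's $\Psi$ is assembled from the horizontal and vertical transition matrices and the matrix $C$ of Lemma~\ref{lem:mcb}, with the identities $\Phi\circ\Psi=\mathrm{id}$ and $\Psi\circ\Phi=\mathrm{id}$ resting on Proposition~\ref{cor:jmatrices}. Your $\gamma\mapsto\gamma^\vee$ is in essence the paper's tilde operation and duality in disguise: your computation that $\vee\circ\vee$ is a shift by $k-2$ composed with the global sign $(-1)^{k-1}$ matches Remark~\ref{rem:tildeshift} and the corollary that $(\M^*)^*$ is $\M$ shifted by $k-2$. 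What your approach buys is a shorter, purely linear-algebraic proof of the bijection that never leaves $\Z^k$; what the paper's approach buys is the $J$-matrix and Pl\"ucker machinery that it then reuses wholesale for the applications (periodicity, the frieze restriction $\Phi_\iota$, duality, and the positivity analysis). The only detail you wave at rather than verify is that the ``harmless sign'' in $\vee\circ\vee$ really preserves $\P_k$: for $k$ even the sign is $-1$ but $\det((-1)\cdot(\gamma_i,\dots,\gamma_{i+k-1}))=(-1)^k=1$, and for $k$ odd the sign is $+1$, so surjectivity of $\vee$ does go through; this is worth one line in a final write-up.
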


	The proof relies heavily on the connection between $SL_k$-friezes and Pl\"ucker coordinates.
	We also obtain a number of other correspondences.
	The first is about the class of tilings which result specifically from (possibly infinite) friezes, and which we denote by $\FR_k$.
	
	\begin{theorem}
		The restriction of the map $\Phi$ given by
		\begin{align*}
			\Phi_\iota:\P_k/SL_k(\Z)&\rightarrow \FR_k\\
			\gamma&\mapsto \M=(m_{i\,j})_{i,j\in\Z}, 
		\end{align*}
		where $m_{i\,j}=\det(\gamma_i,\ldots, \gamma_{i+k-2},\gamma_j)$ is a bijection between tame $SL_k$-tilings from $SL_k$-friezes and equivalence classes of paths.
	\end{theorem}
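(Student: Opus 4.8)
The plan is to realize $\Phi_\iota$ as the composite of the bijection $\Phi$ with the diagonal map $\Delta\colon\P_k/SL_k(\Z)\to(\P_k\times\P_k)/SL_k(\Z)$, $[\gamma]\mapsto[(\gamma,\gamma)]$, and then to identify the image of $\Phi_\iota$ with $\FR_k$; since $\Phi$ is already a bijection, this is all that is needed. First I would check that $\Delta$ is well defined (because $A\cdot(\gamma,\gamma)=(A\gamma,A\gamma)$) and injective: if $(A\gamma,A\gamma)=(\gamma',\gamma')$ for some $A\in SL_k(\Z)$, then $\gamma'=A\gamma$, so $[\gamma]=[\gamma']$. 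Hence $\Phi_\iota:=\Phi\circ\Delta$ is injective, and unravelling the formula for $\Phi$ in the diagonal case gives exactly $m_{ij}=\det(\gamma_i,\ldots,\gamma_{i+k-2},\gamma_j)$, as in the statement. It then remains to show that the image of $\Phi_\iota$ is precisely $\FR_k$.

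For the inclusion ``image $\subseteq\FR_k$'': given a path $\gamma$, the tiling $\M=\Phi_\iota(\gamma)$ satisfies $m_{ij}=0$ whenever $i\le j\le i+k-2$ (a column of the matrix is repeated) and $m_{i,i+k-1}=1$ (the defining property of a path). Thus $\M$ displays the $(k-1)$-wide diagonal band of zeros bordered by a diagonal of ones that is the signature of the frieze-to-tiling construction of \cite{br10}: restricting $\M$ to the appropriate bi-infinite strip one reads off an $SL_k$-frieze, whose entries are the Pl\"ucker coordinates $p_{\{i,\ldots,i+k-2,j\}}$ of the matrix with columns $\gamma_i$ (here one uses the identification of $SL_k$-friezes with Pl\"ucker friezes from \cite{bfgst21}). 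Feeding this frieze through the Bergeron--Reutenauer construction returns $\M$, because two tame $SL_k$-tilings that agree on such a strip coincide; this is where tameness, via the linearization data of \cite{br10}, does the work. Hence $\M\in\FR_k$.

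For the reverse inclusion ``$\FR_k\subseteq$ image'': let $\M\in\FR_k$ arise from an $SL_k$-frieze, and write $(\gamma,\delta)=\Phi^{-1}(\M)$. It suffices to prove $\delta\sim\gamma$. Since $\M$ comes from a frieze it has $m_{ij}=0$ for $i\le j\le i+k-2$, i.e.\ $\delta_j\in\operatorname{span}_{\Q}(\gamma_i,\ldots,\gamma_{i+k-2})$ for every $i$ with $j-k+2\le i\le j$. These are $k-1$ codimension-one subspaces of $\Q^k$; intersecting them one at a time and using that any $k$ consecutive $\gamma$'s are linearly independent (their determinant is $1$), the common intersection collapses to the line $\operatorname{span}(\gamma_j)$, so $\delta_j=c_j\gamma_j$ with $c_j\in\Z$ (as $\gamma_j$ is primitive, being a column of an $SL_k(\Z)$ matrix). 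Finally $m_{i,i+k-1}=c_{i+k-1}\det(\gamma_i,\ldots,\gamma_{i+k-1})=c_{i+k-1}$, and this equals $1$ because $\M$ carries ones along that diagonal; hence every $c_j=1$, so $\delta=\gamma$ and $\M=\Phi_\iota(\gamma)$.

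I expect the main obstacle to be the inclusion ``image $\subseteq\FR_k$'', specifically making precise which tame $SL_k$-tilings ``come from a frieze'' and verifying that the Pl\"ucker-frieze realization of the above strip, run through the Bergeron--Reutenauer construction, reproduces the entire bi-infinite array $\M$ and not merely its entries near the diagonal; this forces one to lean on tameness and the linearization data, and it is also where the ``possibly infinite'' clause enters, so that non-periodic paths are accounted for. By contrast, once $\Phi$ is known to be a bijection, the reverse inclusion is essentially the short linear-algebra computation indicated above, and injectivity is immediate.
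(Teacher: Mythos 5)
Your proposal is correct, and the two halves of it relate to the paper's proof differently. The forward inclusion (image of $\Phi_\iota$ lies in $\FR_k$) is essentially the paper's argument: both observe that $m_{i\,j}=\det(\gamma_i,\ldots,\gamma_{i+k-2},\gamma_j)$ vanishes for $j\in\{i,\ldots,i+k-2\}$ and equals $1$ for $j=i+k-1$, producing the diagonal band of zeros and ones from which an infinite frieze can be read off, and both then invoke the uniqueness of the tame extension to the left half (the linearization data of Bergeron--Reutenauer) to conclude that the frieze-to-tiling construction reproduces all of $\M$. Where you genuinely diverge is surjectivity. The paper constructs the candidate path explicitly as the horizontal strip $\gamma_i=(m_{1\,i},\ldots,m_{k\,i})^T$ of the first $k$ rows of $\M$ and then verifies $\Phi_\iota(\gamma)=\M$ row by row, using the invariance $\Phi_\iota(\gamma)=\Phi_\iota(V_1^T\gamma)$ under the vertical transition matrices to propagate agreement from the first row to all rows. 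You instead start from the already-established bijectivity of $\Phi$, take any representative $(\gamma,\delta)$ of $\Phi^{-1}(\M)$, and force $\delta=\gamma$ by pure linear algebra: the $k-1$ vanishing conditions place $\delta_j$ in the intersection of $k-1$ hyperplanes $\mathrm{span}(\gamma_i,\ldots,\gamma_{i+k-2})$, $j-k+2\le i\le j$, which collapses to the line through $\gamma_j$ because any $k$ consecutive columns of a path are independent; primitivity of $\gamma_j$ gives an integer scalar, and the diagonal of ones pins it to $1$. Your route buys a shorter, more conceptual surjectivity argument that never touches transition matrices, at the cost of the small extra observations (primitivity, and that the hyperplane intersection is exactly one-dimensional); the paper's route is more computational but produces the preimage path explicitly from the tiling. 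Both are sound, provided (as you implicitly do) one fixes the indexing of $\M$ so that the zero diagonals sit at $m_{i\,j}$ with $j\in\{i,\ldots,i+k-2\}$ and the ones at $j=i+k-1$, matching the indexing built into $\Phi$.
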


	We also prove in Corollary~\ref{blockperiodic} that pairs of periodic paths are in bijection with $SL_k$-tilings which have the corresponding periods on their rows and columns.
	On the other hand, the classification of positive $SL_2$-friezes and tilings in terms of clockwise paths in the Farey graph obtained in \cite{s22}, relies heavily on the geometry of the Farey graph.	It is not clear how to interpret this geometric notion in our setting, and in the case of $SL_k$-friezes we introduce an algebraic notion of a quiddity sequence for higher values of $k$.
	Appealing to the structure of the coordiante ring of the Grassmannian $\mathrm{Gr}(k,n)$ and Pl\"ucker relations, we prove that for certain small values of $n$ and $k$, such as most cases where $n\leq 8$, positive $SL_k$-friezes are in bijection with $n$-periodic positive quiddity sequences, see Theorem~\ref{thm:positivitybij}.
	This proof requires a case-by-case analysis for different choices of $k$ and $n$.
		
	Finally, our bijection reinterprets the notion of a dual of a tiling, introduced by Bergeron and Reutenauer \cite{br10}.  The entries of the dual tiling $\M^*$ are by definition $k-1$ minors of $\M$, and this indeed yields an $SL_k$-tiling. Moreover, $(\M^*)^*$ equals $\M$ up to a shift in indices. 
        The dual can be stated in terms of the map $\Phi$ as follows, where $\widetilde{\gamma}$ denotes a certain diagonalization operation on a path $\gamma$. 
	
	\begin{theorem}
		The dual tiling $\Phi(\gamma,\delta)^*=\Phi(A\widetilde{\gamma},\widetilde{\delta})$ for some $A\in SL_k(\Z)$. 
			\end{theorem}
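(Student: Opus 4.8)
The plan is to compute the entries of the dual tiling $\Phi(\gamma,\delta)^*$ directly from the path data: first use cofactor expansion to linearize the entries of $\Phi(\gamma,\delta)$, then apply the Cauchy--Binet formula to collapse the $(k-1)\times(k-1)$ minors that define the dual, and finally recognize the outcome as $\Phi$ applied to a second iterate of the diagonalization operation, up to a fixed change of basis.

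\emph{Linearizing the entries.} For $\gamma\in\P_k$ and an index $a$, let $g_a(\gamma)\in\Z^{k}$ be the vector whose $\ell$-th coordinate is $(-1)^{\ell+k}$ times the maximal minor of the $k\times(k-1)$ matrix $(\gamma_a,\ldots,\gamma_{a+k-2})$ obtained by deleting row $\ell$. Cofactor expansion along the last column gives, for every $v\in\Z^{k}$,
\[
\det(\gamma_a,\ldots,\gamma_{a+k-2},v)=g_a(\gamma)\cdot v,
\]
so $m_{a\,b}=g_a(\gamma)\cdot\delta_b$. Up to the normalization (a reindexing and possible coordinate reversal) built into its definition, the strip whose $a$-th vector is $g_a(\gamma)$ is exactly the diagonalization $\widetilde{\gamma}$; we use throughout that $\widetilde{\gamma}\in\P_k$ for every $\gamma\in\P_k$, which holds by construction of the operation.

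\emph{Cauchy--Binet.} By definition the entry $m^*_{i\,j}$ of the dual is the determinant of an adjacent $(k-1)\times(k-1)$ submatrix of $\M=\Phi(\gamma,\delta)$; say it is the one on rows $i,\ldots,i+k-2$ and columns $j,\ldots,j+k-2$. By the previous step this submatrix equals $P^{\top}Q$, where $P=\big(g_i(\gamma)\mid\cdots\mid g_{i+k-2}(\gamma)\big)$ and $Q=\big(\delta_j\mid\cdots\mid\delta_{j+k-2}\big)$ are $k\times(k-1)$ matrices. Cauchy--Binet gives
\[
m^*_{i\,j}=\det\big(P^{\top}Q\big)=\sum_{\ell=1}^{k}\det\big(P_{\widehat\ell}\big)\,\det\big(Q_{\widehat\ell}\big),
\]
where $P_{\widehat\ell}$ denotes $P$ with row $\ell$ deleted. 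Since the columns of $P$ are the window $\big(\widetilde\gamma_i,\ldots,\widetilde\gamma_{i+k-2}\big)$, the minor $\det(P_{\widehat\ell})$ is $(-1)^{\ell+k}$ times the $\ell$-th coordinate of $g_i(\widetilde\gamma)$, i.e. of the second iterate of the diagonalization applied to $\gamma$; likewise $\det(Q_{\widehat\ell})$ is $(-1)^{\ell+k}$ times the $\ell$-th coordinate of $g_j(\delta)=\widetilde\delta_j$. The signs cancel in pairs, so
\[
m^*_{i\,j}=\sum_{\ell=1}^{k}\big(g_i(\widetilde\gamma)\big)_\ell\big(\widetilde\delta_j\big)_\ell=g_i(\widetilde\gamma)\cdot\widetilde\delta_j=\det\big(\widetilde\gamma_i,\ldots,\widetilde\gamma_{i+k-2},\widetilde\delta_j\big),
\]
using the linearization identity once more, now for the path $\widetilde\gamma$. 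The right-hand side is exactly the $(i,j)$-entry of $\Phi(\widetilde\gamma,\widetilde\delta)$.

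\emph{Producing $A$.} The computation identifies $\M^*$ with $\Phi(\widetilde\gamma,\widetilde\delta)$ once the normalizations of the dual of \cite{br10} and of the operation $\gamma\mapsto\widetilde\gamma$ used earlier in the paper are matched against the ones used here; the residual discrepancy is a fixed reindexing together with a fixed change of basis on the $\gamma$-side. Any such change of basis is realized inside the $\Phi$-picture by a single matrix $A\in SL_k(\Z)$: because
\[
\det\big(A\widetilde\gamma_i,\ldots,A\widetilde\gamma_{i+k-2},\widetilde\delta_j\big)=\det\big(\widetilde\gamma_i,\ldots,\widetilde\gamma_{i+k-2},A^{-1}\widetilde\delta_j\big),
\]
replacing $\widetilde\gamma$ by $A\widetilde\gamma$ is the same as replacing $\widetilde\delta$ by $A^{-1}\widetilde\delta$, which is a move within the $SL_k(\Z)$-equivalence class; choosing $A$ to absorb the discrepancy yields $\Phi(\gamma,\delta)^*=\Phi(A\widetilde\gamma,\widetilde\delta)$. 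As a consistency check, iterating the identity and using that the cofactor construction squares to $\pm\,\mathrm{id}$ on $\Lambda^{k-1}\Z^{k}$ recovers the Bergeron--Reutenauer fact that $(\M^*)^*$ equals $\M$ up to a shift of indices.

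\emph{Main obstacle.} The conceptual core — cofactor linearization, Cauchy--Binet, and recognizing the second iterate of the diagonalization — is short; the work lies in the bookkeeping. One must track the signs $(-1)^{\ell+k}$ and $(-1)^{\binom{k-1}{2}}$ arising from deleted rows and reversed columns, match the exact index windows appearing in the definitions of $\M^*$ and of $\widetilde{\gamma}$, and from these extract the precise matrix $A$ and verify that it lies in $SL_k(\Z)$ rather than merely in $GL_k(\Z)$ — in particular, for odd $k$, that no global sign survives, which would be incompatible with $\M^*$ being an $SL_k$-tiling. One also has to confirm that the Cauchy--Binet expansion of the adjacent minor produces no boundary corrections, which is where the tameness of $\M$ and the defining determinant conditions on $\gamma$ and $\delta$ enter.
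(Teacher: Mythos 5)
Your route is genuinely different from the paper's: you compute the dual entry-by-entry via cofactor linearization and Cauchy--Binet, whereas the paper never computes $\M^*$ directly but instead shows (Lemma~\ref{lem:dualj}, via the periodic path $\phi(\gamma,\delta)$, Pl\"ucker friezes and Proposition~\ref{prop:pluckerdeterminant}) that the horizontal and vertical transition matrices of $\M^*$ coincide with the $J$ matrices of $\widetilde{\delta}$ and $\widetilde{\gamma}$, and then invokes the linearization-data bijection to fix the central $k\times k$ block with a single $A$. Your Cauchy--Binet skeleton is correct: writing $m_{a\,b}=g_a(\gamma)\cdot\delta_b$ with $g_a(\gamma)$ the cofactor vector of $(\gamma_a,\ldots,\gamma_{a+k-2})$, the adjacent $(k-1)\times(k-1)$ minor is $\det(P^{\top}Q)=\sum_\ell\det(P_{\widehat\ell})\det(Q_{\widehat\ell})$, the signs do cancel in pairs, and one gets $\M^*=\Phi(g(\gamma),g(\delta))$ where $g(\gamma)$ denotes the cofactor strip. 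That is an attractive and more elementary reduction than the paper's.

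The genuine gap is the sentence ``up to the normalization built into its definition, the strip $a\mapsto g_a(\gamma)$ is exactly the diagonalization $\widetilde{\gamma}$.'' The paper's $\widetilde{\gamma}$ is \emph{not} defined by cofactors: Definition~\ref{def:tildepaths} prescribes the initial window $(\gamma_{[k]})^{T}$ and a specific diagonal rearrangement of the $J$-matrix entries, $\widetilde{j_{i\,q}}=(-1)^{k}j_{i+q-2,\,k-q+2}$. Identifying the cofactor strip with this object is precisely the content of Lemma~\ref{lem:dualj}(a) (resting on Proposition~\ref{prop:jentries} and Proposition~\ref{prop:pluckerdeterminant}), and it is where all the work lives; your proposal assumes it. Concretely, even when $\gamma_{[k]}=I_k$ one computes $g_1(\gamma)=e_k$ and $g_2(\gamma)=(-1)^{k-1}e_1$, so $g(\gamma)_{[k]}$ is a signed cyclic shift of $I_k=(\gamma_{[k]})^{T}=(\widetilde{\gamma})_{[k]}$; the discrepancy between $g(\gamma)$ and $\widetilde{\gamma}$ involves an index shift as well as a change of basis. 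This matters for your final step: in $\M^*=\Phi(g(\gamma),g(\delta))$ only a \emph{left multiplication} discrepancy on either path can be absorbed into $A$ (using $\Phi(B\alpha,B\beta)=\Phi(\alpha,\beta)$), whereas an index shift of the $\delta$-side strip translates the tiling horizontally and cannot be absorbed by any $A\in SL_k(\Z)$ acting on $\widetilde{\gamma}$. So to close the argument you must prove that $g(\delta)$ has exactly the $J$ matrices of $\widetilde{\delta}$ with the correct index alignment (so that $g(\delta)=B\widetilde{\delta}$ for a single $B\in SL_k(\Z)$), and likewise for $g(\gamma)$; that verification is essentially equivalent to the paper's Lemma~\ref{lem:dualj}(a) and is not supplied by the Cauchy--Binet computation itself.
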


 In particular, since the diagonalization operation is an involution, one can easily deduce the same for the dual, whereas the original proof of this in \cite{br10} relied on some complex calculations.

 \medskip

     This paper is organized as follows.
    In Section \ref{sec:1}, we discuss the key definitions and past results relating to friezes, tilings, and Pl\"ucker coordinates.
    In Section \ref{sec:bijection} we define the map between pairs of paths and $SL_k$-tilings and prove that it is a bijection.
    In Section \ref{sec:applications} we study restrictions of this bijection to obtain periodic tilings and $SL_k$-friezes, as well as investigate connections to duality.
    In Section \ref{sec:positivity} we complete the necessary calculations to prove the partial positivity results for $SL_k$-friezes for small values of $n$ and $k$.

    \subsubsection*{Acknowledgments}
    The authors would like to thank Sophie Morier-Genoud for helpful discussions. The authors were supported by the NSF grant DMS-2054255.
	
	\section{Background}\label{sec:1}
	
	In this section we introduce the relevant background and notation.  In particular, we review the notions of friezes and tilings, and their associated linearization data. 
	
	\subsection{Friezes}
	
	We begin by defining a structure first studied by Conway and Coxeter in the case $k=2$ \cite{cc73}\cite{cc732} and then extended for higher $k$ by Bergeron and Reutenauer \cite{br10}.
	
	\begin{definition}\label{def:frieze}
		An \textit{$SL_k$-frieze} is an array of offset bi-infinite rows of integers  consisting of $k-1$ rows of zeros at the top and bottom, a row of ones below and above them, respectively, and in between $w\geq1$ rows of integers $a_{ij}$ satisfying the following properties.
		
\[
\xymatrix@C=.08em@R=.06em{
 & 0 && 0 && 0 && 0 && 0 && 0 && 0  \\ 
\dots && \iddots&& \iddots &   & \iddots  && \iddots && \iddots && \iddots && \dots\\ 
& 0 && 0 && 0 && 0 && 0 && 0 && 0  \\
\dots & & 1 && 1 && 1 && 1 && 1 && 1 &&  \dots \\
 & \dots && \dots && \dots &&a_{0\,w-1} && a_{1\,w} &&  a_{2\,w+1} && \dots \\
 \dots &&&& \iddots &   & \iddots  && \iddots && \iddots && \iddots && \dots\\ 
 & \dots && \dots && a_{0\,1} && a_{1\,2} && a_{2\,3} && a_{3\,4} && \dots \\
 &  & \dots && a_{0\,0} && a_{1\,1} && a_{2\,2} && a_{3\,3} && \dots  && \dots\\
 & 1 && 1 && 1 && 1 && 1 && 1 && 1 && \\ 
\dots &  & 0 && 0 && 0 && 0 && 0 && 0 &&  \dots \\
& && \iddots&& \iddots &   & \iddots  && \iddots && \iddots &&  \\
&  &0 & & 0 && 0 &&  0 && 0 && 0 &&  &&  \\
}
\]

				\begin{enumerate}
			\item Every $k\times k$ diamond of neighboring entries has determinant $1$ when considered as a $k\times k$ matrix formed by a $45^\circ$ clockwise rotation.
			\item Every $(k+1)\times (k+1)$ diamond has determinant $0$ when considered as a matrix.
		\end{enumerate}
		We call $w$ the \textit{width} of the frieze.  We say the frieze is \textit{positive} if all the entries between the rows of ones are positive.
	\end{definition}

For an example of a frieze, see Figure \ref{friezeexample}.  We will also consider friezes with an infinite number of rows.

	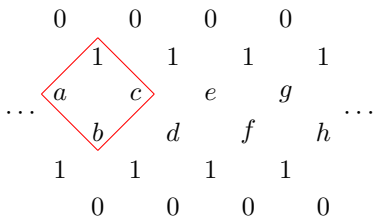
\begin{figure}
		\begin{center}
			\begin{tikzpicture}[scale=0.5]
				\foreach \x in {2,4,6,8}
				{
					\draw (\x,6) node {0};
					\draw (\x,2) node {1};
				}
				\foreach \x in {3,5,7,9}
				{
					\draw (\x,5) node {1};
					\draw (\x,1) node {0};
				}
				\draw (1,3.5) node {$\cdots$};
				\draw (10,3.5) node {$\cdots$};
				\draw (2,4) node {$a$};
				\draw (4,4) node {$c$};
				\draw (6,4) node {$e$};
				\draw (8,4) node {$g$};
				\draw (3,3) node {$b$};
				\draw (5,3) node {$d$};
				\draw (7,3) node {$f$};
				\draw (9,3) node {$h$};
				\draw[red] (1.5,4) -- (3,5.5);
				\draw[red] (3,5.5)--(4.5,4);
				\draw[red] (4.5,4) -- (3,2.5);
				\draw[red] (3,2.5)--(1.5,4);
		\end{tikzpicture}
		\caption{An $SL_2$-frieze of width $2$.  Every $2\times 2$ diamond must have determinant $1$, for example $\begin{vmatrix}
			a&1\\
			b&c
			\end{vmatrix}=ac-b=1$.}
		\label{friezeexample}
		\end{center}
		
	\end{figure}

	\begin{definition}
		An \textit{infinite $SL_k$-frieze} is an array of offset bi-infinite rows of integers  consisting of $k-1$ rows of zeros at the top followed first by a row of ones and then infinitely many rows of integers satisfying properties (1) and (2) of Definition \ref{def:frieze}.
		We say an infinite frieze is \textit{positive} if all the entries below the row of ones are positive.
	\end{definition}

	Note that a non-infinite $SL_k$-frieze may be extended to an infinite frieze with periodic rows.
	When referring to infinite friezes specifically, we will explicitly use the phrase ``infinite friezes".

	For the following definitions, fix $k,n\in\Z_{>0}$ with $k<n$.  Let $\Gr(k,n)$ denote the Grassmannian of $k$ planes in $\C^n$.  
	That is, elements of $\Gr(k,n)$ are $k$-linear subspaces of $\C^n$ which can be represented by $k\times n$ matrices of full rank.   Given a $k$-element subset $I$ of $[n]$, the {\it Pl\"ucker coordinate} $p_I$ denotes the maximal minor of a matrix on columns indexed by $I$. 	It is known that $\Gr(k,n)$ is a projective variety identified with the image of its Pl\"ucker embedding. The homogenous coordinate ring of the affine cone over the Grassmannian is denoted by 
	\[
	\mathcal{A}(k,n)=\C[\widehat{\mathrm{Gr}(k,n)}].
	\]
    Moreover, it was shown by Scott \cite{s03} that $\mathcal{A}(k,n)$ is a cluster algebra where Pl\"ucker coordinates are cluster variables.
	
	        We can also extend $p_I$ to allow for repeated entries as follows.  If $I=(i_1,\ldots,i_k)$  then $p_I(A) = \det(a_{i_1},\ldots,a_{i_k})$ where $A=(a_1,\ldots,a_n)$ is a $k\times n$ matrix. 
 Note that if there exists $p\neq q$ with $i_p=i_q$ then $p_I=0.$
	Additionally, if $(i_1,\ldots,i_k)=\pi(j_1,\ldots,j_m)$ where $\pi$ is a permutation on an ordered $k$-tuple, then $p_{\left(i_1,\ldots,i_k\right)}=\mathrm{sign}(\pi)p_{\left(j_1,\ldots,j_m\right)}$.
	Note that we want the indices of Pl\"ucker coordinates to be ordered and  elements of $[n]$.
	Thus, we introduce the following notation.
	Given a $k$-set, possibly a multiset, $\{i_1,\ldots,i_k\}$, let $j_m\equiv i_m\pmod n$ with $1\leq j_m\leq n$ for all $m\in[k]$.
	Then we write $o(i_1,\ldots,i_k):=(j_{\ell_1},\ldots,j_{\ell_k})$ where $\{j_1,\ldots,j_k\}=\{j_{\ell_1},\ldots,j_{\ell_k}\}$ and $j_{\ell_1}\leq\cdots\leq j_{\ell_k}$.
	Hence, $o(I)$ is obtained from a set $I$ by first reducing the entries mod $n$ and then reordering them in increasing order.
	
	Let $I=\{i_1,\ldots,i_{k-1}\}$ and $J=\{j_0,\ldots,j_k\}$.  
	The Pl\"ucker coordinates satisfy the \textit{Pl\"ucker relations}
	
	\begin{equation}\label{eqn:pluckerrel}
		\sum_{\ell=0}^k(-1)^\ell p_{o(I)j_\ell}\cdot p_{o(J\setminus j_\ell)}=0,
	\end{equation}
	where $o(I)j_p$ denotes the ordered tuple obtained by adjoining $j_p$ at the end of $o(I)$.
	
	We will frequently consider certain kinds of Pl\"ucker coordinates whose indices are at least partially consecutive.  We use the notation $[r]^k:= \{r,r+1,\ldots, r+k-1\}$ to denote the $k$-element set of consecutive integers starting with $r$.
	
	\begin{definition}\label{consecutive}
		The Pl\"ucker coordinate $p_{o(I)}$ is called \textit{consecutive} if $I=[r]^k$. 
		The Pl\"ucker coordinate $p_{o(I\cup\{m\})}$ is called \textit{almost consecutive} if $I=[r]^{k-1}$ and $m\in[n]\setminus I$.
	\end{definition}

	We now define a familiar structure which makes use of Pl\"ucker coordinates in place of integers.

	\begin{definition}
		The \textit{Pl\"ucker frieze} of type $(k,n)$ denoted by $\F_{(k,n)}$ is a $\Z\times[n+k-1]$ array with entries given by the map
		\begin{align*}
		(r,m)&\mapsto p_{o\left([r]^{k-1},m'\right)},
		\end{align*}
		where $m'=m+r-1$.
		For an example, see Figure \ref{pfexample}.
	\end{definition}

	\begin{figure}[H]
		\begin{center}
		\[
		\begin{array}{cccccccccc}
		&0&&0&&0&&0&&\\
		&&p_{1\,2}&&p_{2\,3}&&p_{3\,4}&&p_{4\,5}&\\
		\cdots&p_{2\,5}&&p_{1\,3}&&p_{2\,4}&&p_{3\,5}&&\cdots\\
		&&p_{3\,5}&&p_{1\,4}&&p_{2\,5}&&p_{1\,3}&\\
		&p_{3\,4}&&p_{4\,5}&&p_{1\,5}&&p_{1\,2}&&\\
		&&0&&0&&0&&0&
		\end{array}	
		\]
		\caption{The Pl\"ucker frieze $\F_{(2,5)}$ of type $(2,5)$.  When applied to a matrix satisfying the assumptions in Theorem \ref{thm:pluckerfrieze}, the consecutive Pl\"ucker coordinates in rows $2$ and $5$ become all $1$'s and the remaining entries become integers, resulting in a frieze as in Figure \ref{friezeexample}.}
		\label{pfexample}	
		\end{center}
		
	\end{figure}

	Baur, Faber, Gratz, Serhiyenko and Todorov showed that $\F_{(k,n)}$ satisfies properties (1) and (2) of Definition \ref{def:frieze}. 
	Hence, it is a frieze with entries in $\mathcal{A}(k,n)$ rather than integers \cite{bfgst21}.
	We wish to discuss their relationship to traditional friezes over the integers.
	Let $\F_{(k,n)}(A)$ denote the array of numbers $p_I(A)$ resulting from applying each Pl\"ucker coordinate in the frieze $\F_{(k,n)}$ to the $k\times n$ matrix $A$.
	
	\begin{theorem}\label{thm:pluckerfrieze}\cite[Theorem 3.1]{bfgst21}
		Let $A$ be a $k\times n$ matrix with entries in $\Z$ with the property that each consecutive Pl\"ucker coordinate has a value $1$ when applied to $A$.
		Then $\F_{(k,n)}(A)$ is an $SL_k$-frieze.
	\end{theorem}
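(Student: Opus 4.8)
\textit{Proof idea.}
The plan is to realize $\F_{(k,n)}(A)$ as the entrywise image of the symbolic Pl\"ucker frieze $\F_{(k,n)}$ under the evaluation map $\mathrm{ev}_A\colon p_I\mapsto p_I(A)$ and to transport the frieze structure along it. First note that $\mathrm{ev}_A$ is a ring homomorphism $\mathcal{A}(k,n)\to\Z$: the hypothesis gives $p_{[1]^k}(A)=1\neq 0$, so $A$ has full rank $k$, its Pl\"ucker vector $(p_I(A))_I$ satisfies the Pl\"ucker relations, and hence defines a point of $\widehat{\Gr(k,n)}$; evaluation at this point is a $\C$-algebra homomorphism on $\mathcal{A}(k,n)$, and it lands in $\Z$ because $A$ has integer entries. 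Being a ring homomorphism, $\mathrm{ev}_A$ sends every polynomial identity among the entries of $\F_{(k,n)}$ --- in particular the determinant of any square subarray --- to the corresponding numerical identity among the entries of $\F_{(k,n)}(A)$.

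Next I would read off the boundary shape directly from the defining assignment $(r,m)\mapsto p_{o([r]^{k-1},m')}$ with $m'=m+r-1$. For $m\in\{1,\dots,k-1\}$ and for $m\in\{n+1,\dots,n+k-1\}$ one has $m-1\equiv 0,1,\dots,$ or $k-2\pmod n$, so $m'$ is congruent mod $n$ to an element of $[r]^{k-1}$; then $[r]^{k-1}\cup\{m'\}$ repeats an index modulo $n$ and $p_{o([r]^{k-1},m')}=0$ identically. This gives the $k-1$ rows of zeros at the top and at the bottom. For $m=k$ the index set is $[r]^{k-1}\cup\{r+k-1\}=[r]^k$ and for $m=n$ it is $[r]^{k-1}\cup\{r-1\}=[r-1]^k$, so these two rows consist exactly of consecutive Pl\"ucker coordinates; by hypothesis $\mathrm{ev}_A$ sends each of them to $1$, producing the two rows of ones. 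For the remaining rows $k<m<n$, the index $m'$ is distinct mod $n$ from every element of $[r]^{k-1}$, so the entries are honest almost consecutive Pl\"ucker coordinates and $\mathrm{ev}_A$ carries them to integers. Thus $\F_{(k,n)}(A)$ has the required offset shape, with $w=n-k-1$ interior rows of integers. (If $n=k+1$ then $w=0$ and the statement should be read with the convention that there is no interior.)

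Finally, for the two diamond conditions I would invoke the result recalled just above \cite{bfgst21}: the $k\times k$ diamond determinants of $\F_{(k,n)}$ are products of consecutive Pl\"ucker coordinates, and its $(k+1)\times(k+1)$ diamond determinants vanish in $\mathcal{A}(k,n)$. Applying $\mathrm{ev}_A$ and using once more that consecutive Pl\"ucker coordinates evaluate to $1$, every $k\times k$ diamond of $\F_{(k,n)}(A)$ has determinant $1$ and every $(k+1)\times(k+1)$ diamond has determinant $0$. Combined with the integrality of the entries and the boundary analysis above, this shows $\F_{(k,n)}(A)$ is an $SL_k$-frieze. The substantive part is not any one deep step but the combinatorial bookkeeping of the middle paragraph --- matching the $(r,m)$-coordinates of $\F_{(k,n)}$ to the rows of the frieze diagram, and checking that the $k\times k$ and $(k+1)\times(k+1)$ diamonds pictured there are precisely the subarrays for which \cite{bfgst21} establishes the determinantal identities --- together with being careful that $\mathrm{ev}_A$ is induced by a ring homomorphism, which is exactly what licenses passing from the symbolic Pl\"ucker-frieze identities to the required numerical frieze relations.
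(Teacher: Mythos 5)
Your argument is correct, but note that the paper itself gives no proof of Theorem~\ref{thm:pluckerfrieze} --- it is imported verbatim from \cite{bfgst21}, so there is nothing internal to compare against. Your reconstruction (evaluation at $A$ as a ring homomorphism, the index bookkeeping identifying the zero rows, the two rows of consecutive coordinates, and the interior of almost consecutive ones, then the diamond determinants via the factorization recalled here as Proposition~\ref{prop:pluckerdeterminant} and the rank argument for the $(k+1)\times(k+1)$ minors) is exactly the route taken in the cited source, so this is the same approach rather than a new one.
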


	We call a frieze of the form $F=\F_{(k,n)}(A)$ an $SL_k$-frieze of type $(k,n)$.  Note that these friezes have $n$-periodic rows and width $w=n-k-1$.  Additionally, the following remark says that every frieze arises in this way.
	
	\begin{remark}\label{rem:pluckertofrieze}\cite[Remark 3.7]{bfgst21}
		Any $SL_k$-frieze $F$ of width $w$ over $\Z$ can be embedded into $\Gr(k,n)$ where $n=w+k+1$ as a point which can be represented by a matrix $A_{F}$ over $\Z$ whose consecutive $k\times k$ minors are ones.
		Moreover, $\F_{(k,n)}(A_F)=F$.
	\end{remark}

	We will also need the following result about particular matrices formed by Pl\"ucker coordinates.
    For $m_i\in[n]$, we use the notation $[m_1,m_2]$ for the closed cyclic interval $\{m_1,m_1+1,m_1+2,\ldots,m_2 \}$ where the elements are considered modulo $n$.
    We define open and half-open intervals similarly.
	
	\begin{definition}\label{def:pluckermatrix}
		Let $r,s\in[n]$ and $\underline{m}=(m_1,\ldots,m_s)$ with $m_i\in[n]$.
		We define the $(s\times s)$ matrix
		\[
		A_{\underline{m};r}:=(a_{i\,j})_{1\leq i,j\leq s},
		\]
		where $a_{i\,j}:=p_{o\left([r+i-1]^{k-1},m_j \right)}$ for $1\leq i,j\leq s$.
	\end{definition}

Note that any $s\times s$ diamond of neighboring entries in a Pl\"ucker frieze $\F_{(k,n)}$ yields a matrix of Pl\"ucker coordinates $A_{\underline{m};r}$ where $\underline{m}$ is consecutive. The determinant of any $A_{\underline{m};r}$ can be computed as follows. 

	\begin{proposition}\label{prop:pluckerdeterminant}\cite[Proposition 3.5]{bfgst21}
		Let $r\in[n]$, $s\in[k]$, and $\underline{m}=(m_1,\ldots,m_s)$ with $m_i\in[n]$ for all $i$ satisfying the following conditions.
		\begin{enumerate}
			\item[(c1)] $\underline{m}$ is ordered cyclically modulo $n$.
			\item[(c2)] We have $r+k-2\not\in [m_1,m_s)$.
		\end{enumerate}
		Then
		\[
		\det\left(A_{\underline{m};r}\right)=\left[\prod_{\ell=0}^{s-2}p_{o\left([r+\ell]^k\right)} \right]\cdot p_{o\left([r+s-1]^{k-s},m_1,\ldots,m_s \right)}.
		\]
	\end{proposition}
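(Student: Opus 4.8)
The plan is to prove this by induction on $s$, expanding the determinant along the last column and repeatedly invoking the Plücker relations \eqref{eqn:pluckerrel}. For the base case $s=1$ the matrix $A_{\underline{m};r}$ is the single entry $p_{o([r]^{k-1},m_1)}$, and the claimed formula reduces (with empty product) to exactly this, so there is nothing to prove. For the inductive step, assume the formula holds for $s-1$ and consider the $s\times s$ matrix $A_{\underline{m};r}$ with $\underline{m}=(m_1,\ldots,m_s)$ satisfying (c1) and (c2). The key observation is that the entries in each row of $A_{\underline{m};r}$ are Plücker coordinates whose index sets all share the common ``tail'' $[r+i-1]^{k-1}$ and differ only in the last slot, so a Plücker relation in the appropriate $\Gr(k,n)$ lets us trade the entries $a_{i\,1},\ldots,a_{i\,s}$ against a single ``merged'' coordinate $p_{o([r+i-1]^{k-1},m_1,\ldots,m_s)}$ together with one genuinely consecutive coordinate $p_{o([r+i-1]^{k})}$ — this is precisely the shape of the right-hand side in the statement.

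Concretely, I would proceed as follows. First, set up the correct instance of the Plücker relation: take the $(k-1)$-set $I=[r+s-1]^{k-1}$ and the $(k+1)$-set $J$ built from $[r+s-1]^{k-1}$ shifted appropriately together with the entries of $\underline{m}$, so that \eqref{eqn:pluckerrel} expresses the merged coordinate in the last row as an alternating sum of products $p_{o([r+s-1]^{k-1},m_j)}\cdot(\text{a minor not involving column } j)$. Condition (c1) guarantees that the reorderings $o(\cdot)$ introduce no unexpected sign changes beyond the ones tracked by $(-1)^\ell$, and condition (c2) guarantees that the ``complementary'' coordinate appearing alongside each $a_{i\,j}$ is the consecutive one $p_{o([r+s-1]^{k})}$ rather than some other almost-consecutive coordinate — this is where the hypothesis $r+k-2\notin[m_1,m_s)$ does its real work. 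Next, perform cofactor expansion of $\det(A_{\underline{m};r})$ along the last column; each cofactor is (up to sign) a determinant of an $(s-1)\times(s-1)$ submatrix which, after deleting the last row, is again of the form $A_{\underline{m}';r}$ for a sub-tuple $\underline{m}'$ of $\underline{m}$ still satisfying (c1) and (c2), so the inductive hypothesis applies to it. Finally, I would assemble these pieces: substituting the inductive formula into the cofactor expansion and recognizing the resulting alternating sum as exactly the Plücker relation identified in the first step collapses everything to $\left[\prod_{\ell=0}^{s-2}p_{o([r+\ell]^k)}\right]\cdot p_{o([r+s-1]^{k-s},m_1,\ldots,m_s)}$, completing the induction.

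The main obstacle I anticipate is purely bookkeeping: matching the signs from the cofactor expansion against the signs $(-1)^\ell$ in the Plücker relation \emph{and} against the signs introduced by the ordering operator $o(\cdot)$ when indices are reduced mod $n$ and sorted. One has to check carefully that the cyclic-ordering hypothesis (c1) makes all three sign contributions line up, and that (c2) is used in exactly the spot where an index set could otherwise fail to be the consecutive set $[r+s-1]^k$ after reordering. A secondary, smaller obstacle is verifying that the sub-tuples $\underline{m}'$ arising in the cofactor expansion genuinely inherit conditions (c1) and (c2) from $\underline{m}$ — (c1) is immediate since any sub-tuple of a cyclically ordered tuple is cyclically ordered, but (c2) requires noting that $[m_1',m_s')\subseteq[m_1,m_s)$ for the relevant sub-tuples, which again follows from (c1). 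Once the sign conventions are pinned down, the rest is a direct, if somewhat lengthy, computation.
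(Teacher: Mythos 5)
First, a point of order: the paper does not prove this proposition at all --- it is quoted verbatim from \cite[Proposition 3.5]{bfgst21} and used as an imported black box, so there is no in-paper proof to compare against. Judged on its own, your sketch is the natural argument and, as far as the skeleton goes, it works: the case $s=2$ is exactly the three-term Pl\"ucker relation, and the inductive step closes via a single instance of \eqref{eqn:pluckerrel} with $I=[r+s-1]^{k-1}$ and $J=[r+s-2]^{k-s+1}\cup\{m_1,\ldots,m_s\}$, in which the terms indexed by $j_\ell\in[r+s-1]^{k-s}$ vanish (repeated index), the term $j_\ell=r+s-2$ produces the new consecutive factor $p_{o([r+s-2]^k)}$ paired with $p_{o([r+s-1]^{k-s},m_1,\ldots,m_s)}$, and the remaining terms match the Laplace expansion.

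The one substantive slip is that you say you expand $\det(A_{\underline{m};r})$ along the last \emph{column}. Taken literally this breaks the induction: deleting an interior row $i<s$ leaves the row-windows $[r]^{k-1},\ldots,[r+i-2]^{k-1},[r+i]^{k-1},\ldots$, which are no longer consecutive, so the cofactors are not of the form $A_{\underline{m}';r'}$ and the inductive hypothesis does not apply to them. What your own setup actually describes --- the entries $p_{o([r+s-1]^{k-1},m_j)}$ multiplying minors that omit column $j$, with subtuples $\underline{m}\setminus m_j$ inheriting (c1) and (c2) --- is expansion along the last \emph{row}, and with that correction every cofactor is $A_{\underline{m}\setminus m_j;r}$ and the argument goes through. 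A smaller imprecision: (c2) does not make the ``complementary'' coordinates consecutive (the $p_{o([r+s-2]^{k-s+1},\underline{m}\setminus m_j)}$ are not consecutive in general); its job is to control the cyclic order of $J$ so that the signs $(-1)^\ell$ in \eqref{eqn:pluckerrel}, the cofactor signs $(-1)^{s+j}$, and the reorderings performed by $o(\cdot)$ all agree, and to rule out degenerate placements of the $m_j$ inside the window. You correctly flag this sign bookkeeping as the real labor; with the row/column fix, the proposal is a faithful outline of the standard proof of this identity.
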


    \begin{example}
        Suppose in the case of $\Gr(3,8)$ we have $r=1$, $s=3$, and $\underline{m}=(3,4,5)$.
        Then we have
        \[
        A_{\underline{m};r}=\begin{pmatrix}
            p_{1\,2\,3} & p_{1\,2\,4} & p_{1\,2\,5} \\
            p_{2\,3\,3} & p_{2\,3\,4} & p_{2\,3\,5} \\
            p_{3\,4\,3} & p_{3\,4\,4} & p_{3\,4\,5} \\
        \end{pmatrix}=
        \begin{pmatrix}
            p_{1\,2\,3} & p_{1\,2\,4} & p_{1\,2\,5} \\
            0 & p_{2\,3\,4} & p_{2\,3\,5} \\
            0 & 0 & p_{3\,4\,5} \\
        \end{pmatrix}
        .
        \]
        By Proposition \ref{prop:pluckerdeterminant}, this determinant is
        \[\det\left(A_{\underline{m};r}\right)=\left[\prod_{\ell=0}^{1}p_{o\left([1+\ell]^3\right)} \right]\cdot p_{o\left([3]^{0},3,4,5 \right)}=p_{1\,2\,3}p_{2\,3\,4}p_{3\,4\,5}.
        \]
    \end{example}

	In the set-up of Theorem \ref{thm:pluckerfrieze}, Pl\"ucker coordinates with consecutive entries will be sent to $1$ when applied to the matrices $A$.
	Thus, we have
	\[
	\left[\prod_{\ell=0}^{s-2}p_{o\left([r+\ell]^k\right)}(A) \right]=1,
	\]
	and the determinant in Proposition \ref{prop:pluckerdeterminant} is then given by
	\[
	\det\left(A_{\underline{m};r}\right)(A)=p_{o\left([r+s-1]^{k-s},m_1,\ldots,m_s \right)}(A).
	\]

	\subsection{Tilings}

	 A generalization of friezes called \textit{tilings} was introduced and studied by Bergeron and Reutenauer in \cite{br10}.
	
	\begin{definition}
		A \textit{tiling} $\M=(m_{i\,j})_{i,j\in\Z}$ is an infinite array with values $m_{i\,j}\in\Z$.		We denote by $M_{i\,j}$ the adjacent $k\times k$ sub-matrix of $\M$ with top left entry $m_{ij}$, that is 
		\[
		M_{i\,j}=\M_{[i]^k, [j]^k}=\M_{\{i,\ldots,i+k-1\},\{j,\ldots,j+k-1\}}.
		\]
		We say that $\M$ is an \textit{$SL_k$-tiling} if $M_{i\,j}\in SL_k(\Z)$ for all $i,j\in\Z$.
		In this case we say that $\M$ satisfies the \textit{$SL_k$-property}.
		We say that an $SL_k$-tiling $\M$ is \textit{tame} if every adjacent $(k+1)\times(k+1)$ sub-matrix of $\M$ has determinant $0$.
		We denote by $\slk$ the set of all tame $SL_k$-tilings.
	\end{definition}

	\begin{remark}
		Condition $(2)$ of Definition \ref{def:frieze} is not always required in defining friezes.
		It is also called \textit{tameness}.
		In this paper we only consider tame friezes and tilings, but \textit{wild} tilings, those not satisfying this condition, have been studied by Cuntz \cite{c15}.
	\end{remark}
	
	Friezes correspond to a special type of periodic tilings described as follows.
	
	\begin{definition} \label{def:totiling}\cite[p. 266]{br10}
		Let $F$ be an $SL_k$-frieze.
		Then we denote by $\M_F=(a_{ij})_{i,j\in\Z}$ the tiling constructed from $F$ using the following process.
		Let the rows of a frieze become falling diagonals of a tiling by rotating the frieze $45^\circ$ clockwise  and then reflected across a vertical line. 
		We may fill in the rest of the tiling through a skew extension in both directions as follows.  
		The entries $a_{i\,j}$ of the tiling $\M_F$ are defined by the equation $a_{i\,j+w+k+1}=(-1)^{k-1}a_{i\,j}$ where $w$ is the width of the frieze.  See Figure \ref{friezetotiling} for an example.
	\end{definition}
		
	\begin{figure}[H]
		\begin{center}
			\[
			\begin{array}{rrrrrrrrrrr}
			&\vdots&\vdots&\vdots&\vdots&\vdots&\vdots&\vdots&\vdots&\vdots\\
			\cdots&0&1&a&b&1&0&-1&-a&-b&\cdots\\
			
   \cdots&-1&0&1&c&d&1&0&-1&-c&\cdots\\
			\cdots&-f&-1&0&1&e&f&1&0&-1&\cdots\\
			\cdots&-g&-h&-1&0&1&g&h&1&0&\cdots\\
			&\vdots&\vdots&\vdots&\vdots&\vdots&\vdots&\vdots&\vdots&\vdots
			\end{array}
			\]
			\caption{Tiling resulting from rotating and extending the frieze in Figure \ref{friezeexample}.}
			\label{friezetotiling}
		\end{center}
		
	\end{figure}
	
	Another important structure in our study of tilings is $k$-column vectors.
	Our goal is to construct a bijection between $SL_k$-tilings and pairs of sequences of vectors satisfying certain properties.
	
	\begin{definition}\label{def:path}
		Let $\gamma=\{\gamma_i\}_{i\in\Z}$ be a bi-infinite strip of $k$-column vectors $\gamma_i\in\Z^k$ with the property that for every $i$ the matrix $\left(\gamma_i,\ldots,\gamma_{i+k-1} \right)$, whose columns are $k$ consecutive entries of $\gamma$, is an element of $SL_k(\Z)$.
		We denote the set of all such strips $\P_k$ and we call $\gamma$ a \textit{path}.
	\end{definition}

	These paths also contain the information about how any adjacent $k+1$ vectors are related.  	We encode this information in terms of $J$ matrices, which record how a column of $\gamma$ relates to the preceding $k$ columns.
	
	\begin{definition}\label{def:jmatrices}
		We define a matrix $J_n \in SL_k(\Z)$ for $n\in \Z$ as follows
		\[
		J_n=
		\left[
		\begin{array}{c|c}
		0\cdots0 & (-1)^{k-1}  \\
		\hline
		I_{k-1} & \begin{array}{c}
		j_{n\,2}\\
		\vdots\\
		j_{n\,k}
		\end{array} 
		\end{array}
		\right]
		\]
		where $I_{k-1}$ is the identity matrix of size $k-1$.
        We may also refer to the entry in the top right corner as $j_{n\,1}$ for consistency of notation.
		We use the term \textit{$J$ matrices} when referring to matrices of this form in general, and we write subscript $n$ as we will often refer to sequences of $J$ matrices indexed by $n$.
	\end{definition}

	Observe that multiplying a matrix $A=(a_1,\ldots,a_k)$ on the right by a matrix $J_n$ results in applying the following steps to $A$.
	\begin{enumerate}
		\item Delete the first column $a_1$ of $A$.
		\item Shift all remaining columns to the left by one.
		\item Append a new final column $a_1'$ which is a linear combination of the columns of $A$, namely
		\[
		a_1'=(-1)^{k-1}a_1+j_{n\,2}a_2+\cdots+j_{n\,k}a_k.
		\]
	\end{enumerate}
	The result is the matrix
	\[
	AJ_n=(a_2,\ldots,a_k,a_1').
	\]

	Thus, if $A\in SL_k(\Z)$, then so is $AJ_n$.
	In fact, for any matrix $A'\in SL_k(\Z)$ where the first $k-1$ columns of $A'$ are $a_2,\ldots, a_k$, there exists a unique $J_n$ such that $AJ_n=A'$.
	
	We later show that for any general $B\in SL_k(\Z)$, there is a sequence of $J$ matrices which allow us to transform $A$ into $B$ via right multiplication.
	Thus, these $J$ matrices serve as transitions between $k$ consecutive columns of a path.
	More precisely, to a path $\gamma\in\P_k$, we associate a collection of $J$ matrices $\{J_i\}_{i\in\Z}$ called \textit{transition matrices} such that 
	\[
	(\gamma_i,\ldots,\gamma_{i+k-1})J_i=(\gamma_{i+1},\ldots,\gamma_{i+k}).
	\]
	
	On the other hand, Bergeron and Reutenauer show that all rows (respectively columns) of a tiling $\M\in\slk$ may be written as a linear combination of the previous $k$ rows (respectively columns) with a coefficient of $(-1)^{k-1}$ on the first \cite[Lemma 2]{br10}.
    In particular, they state that
    \[
    \mathrm{Row}_{i+k}=(-1)^{k-1}\mathrm{Row}_i+j_{i\,2}\mathrm{Row}_{i+1}+\cdots+j_{i\,k}\mathrm{Row}_{i+k-1}\]\[
    \mathrm{Col}_{i+k}=(-1)^{k-1}\mathrm{Col}_i+j'_{i\,2}\mathrm{Col}_{i+1}+\cdots+j'_{i\,k}\mathrm{Col}_{i+k-1},
    \]
    where $\mathrm{Row}_i$ (respectively $\mathrm{Col}_i$) refers to the $i$-th row (respectively column) of $\M$.
	They refer to the coefficients, $\{j_{i\,2},j_{i\,3},\ldots,j_{i\,k}\}$ and $\{j'_{i\,2},j'_{i\,3},\ldots,j'_{i\,k}\}$, as the linearization data, and these, in turn, correspond directly with the last column of the $J$ matrix. 	They prove the following result.
	
	\begin{proposition}\label{prop:linearizationdata}\cite[Proposition 3]{br10}
		The mapping
		\begin{align*}
		\xi:\slk&\rightarrow SL_k(\Z)\times \left(\Z^{1\times (k-1)}\right)^\Z\times\left(\Z^{1\times (k-1)} \right)^\Z\\ 
		\M&\mapsto (M_{1\,1},\lambda,\mu),
		\end{align*}
		which associates to a tame $SL_k$-tiling its linearization data $\lambda$ and $\mu$ of the rows and columns, respectively, together with a central matrix $M_{1\,1}=\M_{[k],[k]}$, is a bijection.
	\end{proposition}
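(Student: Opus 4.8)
The map $\xi$ is well defined: by \cite[Lemma 2]{br10} every tame $SL_k$-tiling satisfies the row and column recurrences displayed above, and since some adjacent $k\times k$ minor of $\M$ equals $1$, any $k$ consecutive rows (resp.\ columns) of $\M$ are linearly independent in $\Z^{\Z}$, so the coefficient tuples $\lambda$ and $\mu$ are forced by $\M$. The plan is to construct an explicit inverse $\eta$ by a two-stage reconstruction and then verify $\xi\circ\eta=\mathrm{id}$ and $\eta\circ\xi=\mathrm{id}$.

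Given $(B,\lambda,\mu)$, I would first rebuild the horizontal strip of rows $1,\dots,k$. For $j\in\Z$ let $J'_j$ be the $J$ matrix whose last column is built from $\mu_j=(j'_{j\,2},\dots,j'_{j\,k})$, and let $M_{1\,j}:=\M_{[k],[j]^k}$ be the unique sequence of matrices with $M_{1\,1}=B$ and $M_{1\,j+1}=M_{1\,j}J'_j$ for all $j$. By the description of the action of $J$ matrices recalled above, consecutive windows $M_{1\,j}$ and $M_{1\,j+1}$ overlap consistently, so this determines a well-defined bi-infinite array of rows $1,\dots,k$. I would then fill in the remaining rows using $\lambda$: for $i\ge 1$ put $\mathrm{Row}_{i+k}=(-1)^{k-1}\mathrm{Row}_i+j_{i\,2}\mathrm{Row}_{i+1}+\cdots+j_{i\,k}\mathrm{Row}_{i+k-1}$, and for $i\le 0$ use the same relation solved for its first term, which is legitimate because $(-1)^{k-1}$ is its own inverse. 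Equivalently $M_{i\,j}=L_iM_{1\,j}$, where $L_i$ is a suitable product of the matrices $K_m^{\pm 1}$ and $K_m$ is the transpose of the $J$ matrix built from $\lambda_m$; in particular each $K_m\in SL_k(\Z)$ and $M_{i+1\,j}=K_iM_{i\,j}$ carries out the row recurrence.

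Writing $\M:=\eta(B,\lambda,\mu)$, I would check $\M\in\slk$. Combining $M_{i\,j}=L_iM_{1\,j}$ with $M_{1\,j+1}=M_{1\,j}J'_j$ yields $M_{i\,j+1}=M_{i\,j}J'_j$ and $M_{i+1\,j}=K_iM_{i\,j}$ for all $i,j$; since $J'_j,K_i\in SL_k(\Z)$ and $\det M_{1\,1}=\det B=1$, every adjacent $k\times k$ minor of $\M$ equals $1$, which is the $SL_k$-property. Tameness is then immediate: by construction any $k+1$ consecutive rows of $\M$ are linearly dependent, so every adjacent $(k+1)\times(k+1)$ minor vanishes.

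For $\xi\circ\eta=\mathrm{id}$, one has $M_{1\,1}=B$, the row recurrence of $\M$ has coefficients $\lambda$ by construction, and the column recurrence of $\M$ has coefficients $\mu$ on \emph{every} row because $M_{i\,j+1}=M_{i\,j}J'_j$ holds for all $i$; as consecutive rows and columns of $\M$ are independent (their $k\times k$ minors are $1$), these are the unique such recurrences, so $\xi(\M)=(B,\lambda,\mu)$. For $\eta\circ\xi=\mathrm{id}$, a tiling $\M$ with $\xi(\M)=(M_{1\,1},\lambda,\mu)$ satisfies, by \cite[Lemma 2]{br10}, the column recurrence with data $\mu$ on its top $k$ rows and the row recurrence with data $\lambda$ throughout, so the strip rebuilt from $M_{1\,1}$ and $\mu$ reproduces rows $1,\dots,k$ of $\M$ and the row recurrence then reproduces all of $\M$. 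The step needing the most care is the identity $M_{i\,j+1}=M_{i\,j}J'_j$ for \emph{all} $i$, not just $i=1$: the reconstruction processes columns before rows and thus a priori privileges rows, and one must confirm that ``extend by rows'' and ``extend by columns'' commute, which is exactly what the factorization $M_{i\,j}=L_iM_{1\,j}$ makes transparent.
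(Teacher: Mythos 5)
Your argument is correct. Note that the paper itself does not prove this statement; it is quoted from Bergeron--Reutenauer \cite[Proposition 3]{br10}, so there is no internal proof to compare against, but your reconstruction is the natural (and, as far as the standard treatment goes, essentially the original) argument: well-definedness of $\xi$ via linear independence of $k$ consecutive rows/columns, then an explicit inverse that first propagates $M_{1\,1}$ horizontally by the $J$ matrices built from $\mu$ and then vertically by the transposed $J$ matrices built from $\lambda$. You correctly isolate the one step that actually requires an argument --- that the column recurrence with data $\mu$ holds on \emph{every} row of the reconstructed array, not just rows $1,\dots,k$ --- and the factorization $M_{i\,j}=L_iM_{1\,j}$ with $L_i\in SL_k(\Z)$ independent of $j$ settles it cleanly, while simultaneously giving the $SL_k$-property ($\det M_{i\,j}=1$ as a product of determinant-one matrices) and tameness (every $k+1$ consecutive rows are dependent by construction). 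I see no gap.
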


	This demonstrates that $J$ matrices also serve as transitions between rows and columns of $SL_k$-tilings.
		We recall a few additional definitions appearing in \cite{br10}, starting with the notion of periodicity.

	\begin{definition}\label{def:periodic}
		Let $m\in\Z_{>0}$.  
		We say a path $\gamma$ is \textit{$p$-periodic} if it has the property that $\gamma_i=\gamma_{i+p}$ for all $i\in\Z$.
		Similarly, a sequence of $J$ matrices $\{J_i\}_{i\in\Z}$ is \textit{$p$-periodic} if they have the property that $J_i=J_{i+p}$ for all $i\in\Z$. 
		We say a tiling $\M$ is \textit{$p$-row periodic} (respectively \textit{$p$-column periodic}) if $m_{i\,j}=m_{i+p\,j}$ (respectively $m_{i\,j}=m_{i,j+p}$) for all $i,j\in\Z$.
		We say that a tiling $\M$ is \textit{$(p\times q)$-periodic} if
		\[
		m_{i\,j}=m_{i+p\,j}=m_{i\,j+q}= m_{i+p\,j+q}
		\]
		for all $i,j\in\Z$.
	\end{definition}

We can similarly define the notion of skew periodic paths and tilings as follows. 	
	
	\begin{definition}\label{def:skewperiodic}
		Let $m\in\Z_{>0}$.  
		We say a path $\gamma$ is \textit{skew $p$-periodic} if it has the property that $\gamma_i=(-1)^{k-1}\gamma_{i+p}$ for all $i\in\Z$.
		We denote by $\P_{k,p}$ the set of all skew $p$-periodic paths. 
		We say a tiling $\M$ is \textit{skew $p$-row periodic} (respectively \textit{skew $p$-column periodic}) if $m_{i\,j}=(-1)^{k-1}m_{i+p\,j}$ (respectively $m_{i\,j}=(-1)^{k-1}m_{i,j+p}$) for all $i,j\in\Z$.
		We say that a tiling $\M$ is \textit{skew $(p\times q)$-periodic} if
		\[
		m_{i\,j}=(-1)^{k-1}m_{i+p\,j}=(-1)^{k-1}m_{i\,j+q}= m_{i+p\,j+q}
		\]
		for all $i,j\in\Z$.
	\end{definition}

	Furthermore, \cite{br10} introduces an interesting operation on tilings called duality which we recall below.
	Later, we will present an alternative interpretation of this by applying our results.
	
	\begin{definition}\label{def:dual}\cite[Equation 10]{br10}
		The \textit{$p$-derived tiling} of a tiling $\M$, denoted $\partial_p\M$ is given by
		\[
		\partial_p\M:=\left(M_{i\,j}^{(p)} \right)_{i,j\in\Z}
		\]
		where $M_{i\,j}^{(p)}$ is the adjacent $p\times p$ minor of $\M$ with upper-left entry $m_{i\,j}$, that is $M_{i\,j}^{(p)}=\det \M_{[i]^p, [j]^p}$.
		When $p=k-1$, we call $\partial_{k-1}\M$ the \textit{dual} of $\M$ and we write $\M^*$.
	\end{definition}

	The following proposition shows that the terminology of the dual is justified.
	
	\begin{proposition}\cite[Proposition 6]{br10}
		The dual of a tame $SL_k$-tiling is a tame $SL_k$-tiling.  Moreover, $(\M^*)^*$ and $\M$ coincide up to translation. 
	\end{proposition}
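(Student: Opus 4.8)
There are two claims to prove: (i) the dual $\M^{*}=\partial_{k-1}\M$ is again a tame $SL_k$-tiling, and (ii) $(\M^{*})^{*}$ agrees with $\M$ up to a shift of indices. The quickest route, and the one I would take, passes through the map $\Phi$ and the duality theorem stated in the introduction; I also sketch a more self-contained argument appropriate for this point in the text.

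\medskip
\noindent\textbf{Via $\Phi$.} Since $\Phi$ is a bijection onto the set of tame $SL_k$-tilings, write $\M=\Phi(\gamma,\delta)$. The duality theorem gives $\M^{*}=\Phi(A\widetilde{\gamma},\widetilde{\delta})$ for some $A\in SL_k(\Z)$, and since the codomain of $\Phi$ consists of tame $SL_k$-tilings, part~(i) follows at once. For part~(ii), apply the duality theorem a second time to the pair $(A\widetilde{\gamma},\widetilde{\delta})$, so that $(\M^{*})^{*}=\Phi\!\left(B\,\widetilde{A\widetilde{\gamma}},\ \widetilde{\widetilde{\delta}}\right)$ for some $B\in SL_k(\Z)$. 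Since $\widetilde{\ \cdot\ }$ is an involution, $\widetilde{\widetilde{\delta}}=\delta$ and $\widetilde{\widetilde{\gamma}}=\gamma$ up to a shift of indices; tracking how $\widetilde{\ \cdot\ }$ transports multiplication by an element of $SL_k(\Z)$ then shows that $B$ undoes $A$, so that the pair $\left(B\,\widetilde{A\widetilde{\gamma}},\,\widetilde{\widetilde{\delta}}\right)$ is $(\gamma,\delta)$ up to a common shift, whence $(\M^{*})^{*}=\M$ up to translation. The only genuinely non-formal input is this interaction of $\widetilde{\ \cdot\ }$ with matrix multiplication, together with bookkeeping the shift, and that is the step I expect to require the most care.

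\medskip
\noindent\textbf{Directly.} For an argument that does not rely on later sections, I would begin from \cite[Lemma 2]{br10}, recalled above: in any $\M\in\slk$ each row is an explicit linear combination of the preceding $k$ rows, so every band of more than $k$ consecutive rows has rank $k$; hence every adjacent $(k+j)\times(k+j)$ minor of $\M$ vanishes for $j\ge 1$, while every adjacent $k\times k$ minor equals $1$. An adjacent $s\times s$ block of $\M^{*}=\partial_{k-1}\M$ is the $s\times s$ matrix of consecutive $(k-1)\times(k-1)$ minors of the adjacent $(s+k-2)\times(s+k-2)$ block $N$ of $\M$, and Sylvester's determinant identity (in its sliding-window form) expresses the determinant of such a matrix of minors in terms of the consecutive $k\times k$, $(k-1)\times(k-1)$, and larger minors of $N$. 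Substituting the values just recorded collapses this expression to $\pm 1$ when $s=k$ and to $0$ when $s=k+1$, which gives (i); running the same identity to compute $(\M^{*})^{*}=\partial_{k-1}(\M^{*})$ telescopes --- the intermediate layer $\partial_{k}\M$ being identically $1$ --- down to a single entry of $\M$, which gives (ii). Here the main obstacle is pinning down the precise sliding-window form of Sylvester's identity and carrying the index bookkeeping through, so that the two collapses and the final shift are unambiguous.
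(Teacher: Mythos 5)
This proposition is recalled from \cite[Proposition 6]{br10}; the paper offers no proof of it and instead uses it as an input to its own duality machinery, and that is where your first route breaks down. Lemma~\ref{lem:dualj} opens with ``let $\M^*=\Phi(\gamma^*,\delta^*)$ be its dual,'' which already presupposes that $\M^*$ lies in $\slk$, the image of the bijection $\Phi$ of Theorem~\ref{bijection} --- that is, it presupposes part~(i) of the statement you are proving. Theorem~\ref{thm:dualtiling} is proved from that lemma, so invoking it to conclude that $\M^*$ is a tame $SL_k$-tiling is circular within this paper's logical order. There is a non-circular variant of your idea hiding in the proof of Lemma~\ref{lem:dualj}: embed any finite adjacent block of $\M$ into a tiling $\M'$ coming from a Pl\"ucker frieze via Lemma~\ref{lem:friezemapping}, and observe using Proposition~\ref{prop:pluckerdeterminant} that the corresponding block of $(\M')^*$ consists again of almost consecutive Pl\"ucker coordinates of the same matrix, hence is a block of a Pl\"ucker frieze tiling and therefore satisfies the $SL_k$ and tameness conditions by Theorem~\ref{thm:pluckerfrieze}; but that argument has to be made explicitly, and you do not make it.

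For part~(ii) your structure matches the paper's later corollary to Theorem~\ref{thm:dualtiling} (apply the tilde twice and use Remark~\ref{rem:tildeshift}), but the step ``tracking how $\widetilde{\ \cdot\ }$ transports multiplication by an element of $SL_k(\Z)$ then shows that $B$ undoes $A$'' is exactly the point that does not work as stated: the tilde operation transposes the initial $k\times k$ block and reindexes the $J$ matrices, so $\widetilde{A\gamma}$ is not $A'\widetilde{\gamma}$ for any evident $A'$, and the paper deliberately avoids this by pinning down the central block directly, via Lemma~\ref{lem:dualj}(b), as $(\M^*)^*_{[k],[k]}=\M_{[k-1]^k,[k-1]^k}$. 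Your second, ``direct'' route is closer in spirit to the actual argument in \cite{br10} (the rank-$k$ band structure coming from the linear recurrences on rows and columns is indeed the essential ingredient), but it is only a sketch: classical Sylvester requires all the minors to contain a fixed common submatrix, whereas your $(k-1)\times(k-1)$ windows slide, and a naive application to a block of size larger than $k$ only returns $0=0$ since all such adjacent minors of $\M$ vanish. The precise identity that makes the determinant of the matrix of sliding minors ``collapse to $\pm1$'' is the entire content of the proof, and it is not supplied.
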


	\subsection{$SL_2$-tilings}
	
	$SL_2$-tilings were studied in detail by Short where he relates them to the combinatorics of the Farey graph \cite{s22}.
	We recall the main results below.
	
	\begin{definition}
		A \textit{Farey graph} is a graph with vertices in $\Q\cup\infty$.
		Two reduced rationals $\dfrac{a}{b}$ and $\dfrac{b}{c}$ are connected by an edge if $ad-bc=\pm1$.
		In this case, we take $\infty=\dfrac{1}{0}$.
	\end{definition}

	\begin{definition}
		A \textit{path of reduced rationals} is a bi-infinite sequence of reduced rationals $\gamma=\left\{\dfrac{a_i}{b_i}\right\}_{i\in\Z}$ which satisfy the property that
		\begin{equation}\label{eq1}
		a_ib_{i+1}-a_{i+1}b_i=1.
		\end{equation}
		We denote the set of all such paths as $\P\Q$.
		Note that elements of $\P\Q$ are paths in the graph theoretic sense in the Farey graph.
	\end{definition}
	
	We take a moment to draw an important distinction between $\P\Q$ and $\P_2$ as defined in Definition \ref{def:path}.
	Observe that for a path $\gamma\in\P_2$, there is a difference between columns $\displaystyle\binom{a}{b}$ and $\displaystyle\binom{-a}{-b}$ of $\gamma$ which is lacking in the Farey graph.
	Both of these represent the same reduced rational $\dfrac{a}{b}=\dfrac{-a}{-b}$. 
	Nevertheless, the condition \eqref{eq1} ensures that the same path in the Farey graph can be represented in two ways as $\gamma$ and $-\gamma$, obtained by negating numerator and denominator of every entry of $\gamma$. 
		This causes Short to identify tilings $\M$ and $-\M$, the tiling obtained by negating all entries of $\M$.
	This enables him to obtain a bijection between $SL_2(\Z)$-tilings up to a global change of sign and paths in $\P\Q$, which can be stated as follows. 

	Let $(\P\Q\times\P\Q)/SL_2(\Z)$ denote the set of conjugacy classes consisting of pairs of paths $\gamma,\delta$ up to multiplying all columns in $\gamma$ and $\delta$, now treated as vectors in $\Z^2$ rather then fractions, by the same matrix $A\in SL_k(\Z)$.	
	\begin{definition}\label{def:shortmap}\cite{s22}
		Let $\gamma=\left\{\disp\frac{a_i}{b_i}\right\}_{i\in\Z}$ and $\delta=\left\{\disp\frac{c_j}{d_j}\right\}_{j\in\Z}$ be paths in $\P\Q$.
		Define the map $\Phi\Q$ as follows.
		\begin{align*}
		\Phi\Q:(\P\Q\times\P\Q)/SL_2(\Z)&\rightarrow \mathbb{SL}_2/\pm\\
		\left(\gamma,\delta\right)&\mapsto \M=\left(m_{i\,j}\right)_{i,j\in\Z}
		\end{align*}
		where $\mathbb{SL}_2/\pm$ is the set of all tilings where we identify $\M$ and $-\M$, and $m_{i\,j}=a_id_j-b_ic_j$.
	\end{definition}

	\begin{theorem}\cite[Theorem 1.1]{s22}\label{thm:short}
		The map $\Phi\Q$ in Definition \ref{def:shortmap} is a bijection between pairs of paths of reduced rationals modulo $SL_2(\Z)$ and the set $\mathbb{SL}_2/\pm$ of all $SL_2$-tilings  up to a global change of sign.
	\end{theorem}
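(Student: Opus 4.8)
The plan is to prove this by the linear-algebra argument (the one that generalizes to all $k$) rather than via the geometry of the Farey graph. Write the entries of the two paths as column vectors, $v_i=\binom{a_i}{b_i}$ and $w_j=\binom{c_j}{d_j}$, so that $m_{i\,j}=\det(v_i,w_j)$ and the defining condition \eqref{eq1} reads $\det(v_i,v_{i+1})=\det(w_j,w_{j+1})=1$. \textit{Well-definedness:} I would first check that $\M=(\det(v_i,w_j))_{i,j}$ is a tame $SL_2$-tiling. Applying the Pl\"ucker relation for $\Gr(2,4)$ to the $2\times 4$ matrix with columns $v_i,v_{i+1},w_j,w_{j+1}$ gives $\det(M_{i\,j})=\det(v_i,v_{i+1})\det(w_j,w_{j+1})=1$, so $\M$ has the $SL_2$-property; and since $\mathrm{Row}_i(\M)=a_i(d_j)_j-b_i(c_j)_j$ lies in the plane spanned by the two fixed rows $(c_j)_j$ and $(d_j)_j$, every $3\times3$ minor of $\M$ vanishes, so $\M$ is tame. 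Because $\det(Av_i,Aw_j)=\det(v_i,w_j)$ for $A\in SL_2(\Z)$ and negating $\gamma$ or $\delta$ negates $\M$, the construction descends to a well-defined map $(\P\Q\times\P\Q)/SL_2(\Z)\to\mathbb{SL}_2/\pm$, the $\gamma\sim-\gamma$ ambiguity inside $\P\Q$ being absorbed by the $\pm$ on the target.

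\textit{Surjectivity.} Given a tame $\M\in\mathbb{SL}_2$, I would reconstruct a preimage explicitly. By \cite[Lemma 2]{br10} the rows of $\M$ span a $2$-dimensional space, and the $SL_2$-property forces any two consecutive rows to be a basis of it. Set $\delta$ to be the sequence $w_j:=\binom{-m_{1\,j}}{m_{0\,j}}$ and $\gamma$ to be the sequence $v_i:=$ the coordinate vector of $\mathrm{Row}_i(\M)$ in the basis $(\mathrm{Row}_0(\M),\mathrm{Row}_1(\M))$. Restricting the identity $\mathrm{Row}_i=v_i^{(1)}\mathrm{Row}_0+v_i^{(2)}\mathrm{Row}_1$ to the first two columns shows that $v_i$ solves a $2\times2$ linear system with coefficient matrix $M_{0\,0}\in SL_2(\Z)$ and integer right-hand side, hence $v_i\in\Z^2$; applying the same identity for indices $i$ and $i+1$ identifies $\det M_{i\,0}$ with $\det M_{0\,0}\cdot\det(v_i,v_{i+1})$, forcing $\det(v_i,v_{i+1})=1$, and likewise $\det(w_j,w_{j+1})=\det M_{0\,j}=1$. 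Thus $\gamma,\delta\in\P_2$, and unwinding the definitions gives $\det(v_i,w_j)=v_i^{(1)}m_{0\,j}+v_i^{(2)}m_{1\,j}=m_{i\,j}$, i.e. $\Phi\Q(\gamma,\delta)=\M$.

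\textit{Injectivity.} Suppose $(\gamma,\delta)$ and $(\gamma',\delta')$ have the same image in $\mathbb{SL}_2/\pm$; absorbing the possible global sign into $\gamma'$ (harmless in $\P\Q$), I may assume they produce the same tiling $\M$. Acting on $(\gamma,\delta)$ by $(v_0\mid v_1)^{-1}\in SL_2(\Z)$ leaves $\M$ fixed and normalizes $v_0=\binom{1}{0}$, $v_1=\binom{0}{1}$; then $\mathrm{Row}_0(\M)$ and $\mathrm{Row}_1(\M)$ become exactly the ``$d$-row'' and ``$-c$-row'' of $\delta$, which forces $\delta$, and then $\gamma$, to be the canonical reconstruction of $\M$ from the previous step. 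Applying the same normalization to $(\gamma',\delta')$, both normalized pairs equal that reconstruction, so the original pairs differ by an element of $SL_2(\Z)$ and represent the same class.

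I expect the real work to be in the surjectivity step: one must simultaneously extract \emph{integral} vectors and verify unimodularity of every consecutive pair using only the tiling axioms — tameness to guarantee that the coordinate expansions of the rows exist globally, and the $SL_2$-property to pin the relevant minors to $1$ — and one must check that the $\pm$-quotient on tilings matches exactly (neither more nor less than) the $\gamma\sim-\gamma$ redundancy already present in $\P\Q$, so that the descended map is a genuine bijection rather than a surjection with two-element fibers.
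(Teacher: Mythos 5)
Your proposal is correct, but it is worth noting that the paper itself contains no proof of this statement: Theorem~\ref{thm:short} is quoted from Short \cite{s22}, whose own argument is routed through the geometry of the Farey graph, and the present paper only recovers it afterwards as the $k=2$ specialization of Theorem~\ref{bijection} via the commutative square relating $\Phi$ and $\Phi\Q$ through the quotient maps $\chi,\chi'$. Your argument is a third, self-contained route that is essentially the $k=2$ shadow of the paper's general machinery with all of the heavy scaffolding stripped away: well-definedness via the single Pl\"ucker identity $\det(v_i,w_j)\det(v_{i+1},w_{j+1})-\det(v_i,w_{j+1})\det(v_{i+1},w_j)=\det(v_i,v_{i+1})\det(w_j,w_{j+1})$ replaces the paper's detour through the auxiliary path $\phi(\gamma,\delta)$ and Pl\"ucker friezes (Lemmas~\ref{lem:frieze}--\ref{lem:friezemapping}); your explicit reconstruction $w_j=\binom{-m_{1\,j}}{m_{0\,j}}$, $v_i=$ coordinates of $\mathrm{Row}_i$ in the basis $(\mathrm{Row}_0,\mathrm{Row}_1)$, replaces the inverse map $\Psi$ built from transition matrices and the matrix $C$ of Lemma~\ref{lem:mcb}; and your normalization $v_0=e_1$, $v_1=e_2$ for injectivity is exactly the normalization used in Proposition~\ref{prop:psicircphi}. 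All the individual steps check out — integrality of the $v_i$ from Cramer's rule against $M_{0\,0}\in SL_2(\Z)$, unimodularity of consecutive pairs from $\det M_{i\,0}=\det M_{0\,j}=1$, tameness supplying the global two-dimensionality of the row space via \cite[Lemma 2]{br10}, and the careful matching of the $\pm$ quotient on tilings with the sign ambiguity of lifting a path of reduced rationals to a path of vectors. What your approach buys is an elementary, Grassmannian-free proof of the $k=2$ case; what it loses is precisely what the paper's machinery is built for, namely that for $k\geq 3$ the reconstruction of $\gamma$ from the rows of $\M$ can no longer be read off from two rows by hand, which is why the paper resorts to $J$ matrices, the tilde operation, and Pl\"ucker determinant formulas.
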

	
	In this paper, we obtain a generalization of this result for all $k\geq 2$.
	Moreover, the geometry of the Farey graph allows to establish further bijections by placing certain restrictions on paths.  In particular, this leads to the classification of friezes as well as positivity for friezes and tiling in terms of paths. 
	While the main result of \cite{s22} and several others are extended here, it is not clear how to generalize all of them as we lack the connection to geometry.
	
	\section{The Bijection}\label{sec:bijection}

	In this section we define a map $\Phi$ from pairs of paths to $SL_k$-tilings and then prove that it is a bijection.  We first establish a relationship between the $J$ matrices of paths and the $J$ matrices of tilings, which play an important part in the proof.  Furthermore, we rely on the interpretation of $SL_k$-friezes as Pl\"ucker friezes evaluated at a particular element of the Grassmannian.  
 	
	\subsection{Defining $\Phi$}
 
	We begin by introdusing a map $\widetilde{\Phi}$.
	The goal of this subsection is to show that this map is well-defined.
	
	\begin{definition}\label{phihat}
		We define a map $\widetilde{\Phi}$ from a pair of paths $(\gamma,\delta)\in\P_k\times\P_k$ to a tiling as follows  
		\begin{align*}
		\widetilde{\Phi}:(\P_k\times\P_k)&\rightarrow\slk\\
		(\gamma,\delta)&\mapsto \M=(m_{i\,j})_{i,j\in\Z},
		\end{align*} 
		where
		\[
		m_{i\,j}=\det\left(\gamma_i,\gamma_{i+1},\ldots,\gamma_{i+k-2},\delta_{j} \right).
		\]
	\end{definition}

    Thus, the entries $m_{i\,j}$ of the tiling come from taking the determinant of $k-1$ consecutive columns of $\gamma$ and a single column of $\delta$.

    \begin{example}\label{ex:phimapexample}
    Suppose we have a pair of paths $\gamma,\delta\in\Z^3$ defined as follows:
    \[  
    \gamma=(\cdots,\gamma_1,\gamma_2,\gamma_3,\gamma_4,\cdots)=\left(
    \begin{array}{cccccc}
        &1&0&0&1&\\
        \cdots&0&1&0&5&\cdots\\
        &0&0&1&2&
    \end{array}
    \right)\\
    \]
    and 
    \[
    \delta=(\cdots,\delta_1,\delta_2,\delta_3,\cdots)=\left(
    \begin{array}{ccccc}
        &1&1&1&\\
        \cdots&1&2&3&\cdots\\
        &1&3&6&
    \end{array}
    \right).
    \]
    
    To derive a specific entry of the tiling $\tilde{\Phi}$, say $m_{1\,2}$, we take two entries of $\gamma$ starting at $\gamma_1$ and append $\delta_2$.
    The resulting determinant gives the entry:
    \[
    m_{1\,2}=\det(\gamma_1,\gamma_2,\delta_2)=\det\begin{pmatrix}
        1&0&1\\
        0&1&2\\
        0&0&3
    \end{pmatrix}=3.\]
    We may assemble the whole tiling in this fashion.
    For example,
    \[
    \M=
    \begin{array}{rrrrr}
         \ddots&\vdots&\vdots&\vdots&\\
         \cdots&~~1&~~\textcolor{red}{3}&~~6&\cdots\\
         \cdots&~~1&~~1&~~1&\cdots\\
         \cdots&-4&-3&-2&\cdots\\
         &\vdots&\vdots&\vdots&\ddots
    \end{array}.
    \]
    \end{example}
    
	Note that it is not clear from the definition that the array $\widetilde{\Phi}(\gamma,\delta)$ satisfies the $SL_k$-property.
	We will show this later.
	For now, we need some preliminary results.
	Let $A$ be a $k\times k$ matrix.
	We define the multiplication of $A$ by $\gamma\in\P_k$, denoted by $A\gamma$, by
	\[
	A\gamma=\left(\cdots,A\gamma_1,A\gamma_2,\ldots \right).
	\]
	
	\begin{lemma}\label{lem:invariance}
		The map $\widetilde{\Phi}$ is invariant under multiplication by $A\in SL_k(\Z)$, that is $\widetilde{\Phi}(\gamma,\delta)=\widetilde{\Phi}(A\gamma,A\delta)$.
	\end{lemma}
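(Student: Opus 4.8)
The plan is to prove this by a direct computation showing that each entry of $\widetilde{\Phi}(A\gamma,A\delta)$ equals the corresponding entry of $\widetilde{\Phi}(\gamma,\delta)$. By Definition~\ref{phihat}, the $(i,j)$ entry of $\widetilde{\Phi}(A\gamma,A\delta)$ is
\[
\det\left(A\gamma_i, A\gamma_{i+1},\ldots, A\gamma_{i+k-2}, A\delta_j\right).
\]
Since the columns of this matrix are obtained by applying $A$ to the columns of the matrix $\left(\gamma_i,\ldots,\gamma_{i+k-2},\delta_j\right)$, we have the matrix factorization
\[
\left(A\gamma_i, \ldots, A\gamma_{i+k-2}, A\delta_j\right) = A\cdot\left(\gamma_i,\ldots,\gamma_{i+k-2},\delta_j\right).
\]
Taking determinants and using multiplicativity of the determinant together with $\det A = 1$ (as $A\in SL_k(\Z)$) gives exactly $m_{i\,j}$, the $(i,j)$ entry of $\widetilde{\Phi}(\gamma,\delta)$.

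Before running this computation I would first note that the statement presupposes $A\gamma,A\delta\in\P_k$, i.e.\ that multiplying a path by an element of $SL_k(\Z)$ yields a path; this is immediate since for any $i$ the matrix $\left(A\gamma_i,\ldots,A\gamma_{i+k-1}\right) = A\left(\gamma_i,\ldots,\gamma_{i+k-1}\right)$ is a product of two elements of $SL_k(\Z)$ and hence lies in $SL_k(\Z)$. (One could also remark that this is exactly the path-multiplication operation already introduced before Definition~\ref{phihat}.) With that in hand, the argument above applies to every $i,j\in\Z$ simultaneously, so the two arrays agree entrywise and $\widetilde{\Phi}(\gamma,\delta) = \widetilde{\Phi}(A\gamma,A\delta)$.

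There is essentially no obstacle here: the proof is a one-line consequence of the multiplicativity of the determinant and $\det A = 1$. The only thing to be slightly careful about is bookkeeping — making sure the factorization $\left(A\gamma_i,\ldots,A\gamma_{i+k-2},A\delta_j\right) = A\left(\gamma_i,\ldots,\gamma_{i+k-2},\delta_j\right)$ is stated correctly (it holds because matrix multiplication acts column-by-column on the right factor), and noting that $SL_k(\Z)$ is closed under multiplication so no well-definedness issue arises. I would present it in exactly the order above: (1) observe $A\gamma,A\delta\in\P_k$; (2) write the entry of $\widetilde{\Phi}(A\gamma,A\delta)$; (3) pull $A$ out as a left factor; (4) apply $\det(AB)=\det A\det B$ with $\det A =1$ to conclude.
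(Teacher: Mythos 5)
Your proof is correct and is essentially identical to the paper's: both factor $\left(A\gamma_i,\ldots,A\gamma_{i+k-2},A\delta_j\right)$ as $A\left(\gamma_i,\ldots,\gamma_{i+k-2},\delta_j\right)$ and apply multiplicativity of the determinant with $\det A=1$. Your added remark that $A\gamma\in\P_k$ is a harmless (and reasonable) extra observation that the paper leaves implicit, having already defined the multiplication $A\gamma$ just before the lemma.
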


	\begin{proof}
		Fix $(\gamma,\delta)\in(\P_k\times\P_k)$.
		Let $\M=(m_{i\,j})_{i,j\in\Z}$ be the image of $(\gamma,\delta)$ under $\widetilde{\Phi}$.
		Then $\widetilde{\Phi}(A\gamma,A\delta)$ equals $\M'=(m'_{i\,j})_{i,j\in\Z}$ where
		\begin{align*}
		m'_{i\,j}&=\det\left(A\gamma_i,\ldots,A\gamma_{i+k-2},A\delta_j\right)\\
		&=\det\left(A\left(\gamma_i,\gamma_{i+1},\dots,\gamma_{i+k-2},\delta_{j} \right)\right)\\
		&= \det(A)\det(\gamma_i, \gamma_{i+1}, \dots, \delta_j)\\
		&=\det(A)m_{i\,j}\\
		&=m_{i\,j}.
		\end{align*}
	\end{proof}
	
	Let $\pi:(\P_k\times \P_k)\rightarrow (\P_k\times \P_k)/SL_k(\Z)$ be the quotient map.  
	By Lemma \ref{lem:invariance}, $\widetilde{\Phi}(A\gamma,A\delta)=\widetilde{\Phi}(\gamma,\delta)$, so $\widetilde{\Phi}$ induces a map $\Phi$ from $(\P_k\times \P_k)/SL_k(\Z)$ to $\slk$ such that the following diagram commutes.
	\[
\begin{tikzcd}
	\P_k\times\P_k \ar[d,"\pi"] \ar[r,"\widetilde{\Phi}"] & \slk \\
	(\P_k\times \P_k)/SL_k(\Z) \ar[ur, "\Phi"]
	\end{tikzcd}
	\]
	We now focus on the map $\Phi$.  We begin by showing that $\Phi$ is well defined in the case of tilings obtained from friezes.  
	
	\begin{definition}\label{def:matrixpath}
	Let $A=(a_1, \dots, a_n)$ be a $k\times n$ matrix with entries in $\Z$ such $p_{o(I)}(A)=1$ for all consecutive Pl\"ucker coordinates. Define a path $\phi_A \in \P_k$ by taking the skew periodic extension of $A$ by letting 
	$(\phi_A)_i=a_i$ for  $i\in[n]$ and otherwise $(\phi_A)_i=(-1)^{k-1} (\phi_A)_{i+n}$. 
	\end{definition}

	\begin{lemma}\label{lem:frieze}
	Let $A=(a_1, \dots, a_n)$ be a $k\times n$ matrix with entries in $\Z$ such $p_{o(I)}(A)=1$ for all consecutive Pl\"ucker coordinates.  Then the tiling from the frieze $\F_{(k,n)}(A)$ equals the image of $(\phi_A, \phi_A)$ under $\Phi$, that is $\M_{\F_{(k,n)}(A)} = \Phi(\phi_A, \phi_A)$.
	\end{lemma}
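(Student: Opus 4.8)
The plan is to show that the two tilings $\M_{\F_{(k,n)}(A)}$ and $\Phi(\phi_A,\phi_A)$ agree entry by entry. Both are indexed by $\Z\times\Z$, so I would compare $m_{i\,j}$ obtained from the frieze-to-tiling construction of Definition~\ref{def:totiling} with the entry $\det\bigl((\phi_A)_i,\dots,(\phi_A)_{i+k-2},(\phi_A)_j\bigr)$ coming from $\widetilde\Phi$. Since both arrays are determined by finitely many entries via the skew-extension rule $a_{i,\,j+n}=(-1)^{k-1}a_{i\,j}$ (note $w+k+1 = n$) together with the $J$-matrix recursions, and since the path $\phi_A$ is built precisely to satisfy $(\phi_A)_{i+n}=(-1)^{k-1}(\phi_A)_i$, it suffices to verify the identity on a fundamental domain — say for $i\in[n]$ and $j$ ranging over one period — and then argue that both sides propagate identically under the shift.

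First I would unwind the definition of $\M_F$ in Definition~\ref{def:totiling}: the rows of the frieze $F=\F_{(k,n)}(A)$ become falling diagonals of the tiling after a $45^\circ$ clockwise rotation and a reflection. Concretely, I would pin down the index bookkeeping so that the tiling entry $m_{i\,j}$ equals a specific frieze entry $a_{rm}$ of $\F_{(k,n)}$, which by definition of the Pl\"ucker frieze is $p_{o([r]^{k-1},m')}(A)$ for the appropriate $r,m'$ depending on $i,j$. On the other side, $\widetilde\Phi(\phi_A,\phi_A)$ has entry $\det\bigl(a_i,a_{i+1},\dots,a_{i+k-2},a_j\bigr)$ (reading subscripts mod $n$ with the sign $(-1)^{k-1}$ each time we wrap), which is exactly $p_{o(i,i+1,\dots,i+k-2,\,j)}(A) = p_{o([i]^{k-1},j)}(A)$ up to the sign correction absorbed by $o(\cdot)$ and the skew-periodicity of $\phi_A$. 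So the heart of the matter is matching these two families of Pl\"ucker coordinates: I need the index $(r,m')$ coming from the rotate-reflect-extend recipe to produce the same ordered $k$-set (mod $n$, reordered) as the pair $(i,j)$ coming from $\widetilde\Phi$. I would also want to invoke that consecutive Pl\"ucker coordinates evaluate to $1$ on $A$ (the hypothesis), which is what makes the frieze's row of ones match the path condition $\det(\gamma_i,\dots,\gamma_{i+k-1})=1$.

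The main obstacle, I expect, is the \emph{index translation}: getting the coordinate systems of the three objects — the offset frieze array, the tiling obtained by rotation/reflection/skew-extension, and the double-indexed path determinant — to line up, including the placement of the rows of zeros and ones and the correct sign conventions from $(-1)^{k-1}$ appearing both in the skew extension and in the $o(\cdot)$ reordering when indices wrap around mod $n$. I would handle this by first checking the $k=2$ case against Short's setup as a sanity check, then setting up a careful lemma-free computation: write $m'=m+r-1$ as in the definition of $\F_{(k,n)}$, trace where row $r$, position $m$ of the frieze lands under the clockwise rotation and vertical reflection, and confirm it is the tiling cell $(i,j)$ with, say, $i\equiv r\pmod n$ and $j\equiv m'\pmod n$ (or whatever the correct relation turns out to be). Once the indices are matched, the equality of entries is immediate because both sides are literally the same Pl\"ucker coordinate evaluated at $A$, and the skew-periodic extension on both sides is governed by the identical rule $x_{i+n}=(-1)^{k-1}x_i$. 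Finally I would remark that this simultaneously shows $\Phi(\phi_A,\phi_A)$ lands in $\slk$ (it is a genuine tame $SL_k$-tiling) since $\M_{\F_{(k,n)}(A)}$ is, by Theorem~\ref{thm:pluckerfrieze} and the construction of Definition~\ref{def:totiling}.
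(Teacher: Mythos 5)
Your plan coincides with the paper's own proof: both identify the tiling entry and the $\widetilde{\Phi}$ entry as the almost consecutive Pl\"ucker coordinate $\pm p_{o([i]^{k-1},j)}(A)$, check that the signs agree on the fundamental diagonal strip $i\leq j\leq n+i-1$ obtained by rotating the frieze, and then conclude by noting that both arrays extend from that strip by the same skew-periodic rule $x_{i+n}=(-1)^{k-1}x_i$. The index-translation and sign-matching work you flag as the main obstacle is exactly the (lightly justified) core of the paper's argument, so your approach is correct and essentially identical.
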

	
	\begin{proof}
	The entries of the frieze $\F_{(k,n)}(A)$, and hence the tiling $\M_{\F_{(k,n)}(A)}$, correspond to almost consecutive minors $p_{o(I)}(A)$ up to a sign.  Note that by definition of $\M_{\F_{(k,n)}(A)}$, the rows of $\F_{(k,n)}(A)$ become falling diagonals of the tiling. We can index the entries of the tiling such that the entry in position $(1,1)$ equals $p_{o([1]^{k-1},1)}(A)=0$, which lies on the first diagonal of zeros.  More generally, the entry in position $(i,j)$ equals $p_{o([i]^{k-1},j)}(A)$ up to a sign of $(-1)^k$.  Similarly, by definition of the map $\Phi$, the entry in position $(i,j)$ equals 
	\[\det((\phi_A))_i, \dots, (\phi_A)_{i+k-2}, (\phi_A))_j)=\pm p_{o([i]^{k-1},j)}(A)\]
	where equality follows from the construction of $\phi_A$.  
	Furthermore, it is easy to see that in the diagonal strip where $i\leq j \leq n+i-1$, the strip obtained by rotating the frieze $\F_{(k,n)}(A)$, we have that the two quantities above have the same sign, and therefore are actually equal.  Then both $\M_{\F_{(k,n)}(A)}$ and  $\Phi(\phi_A, \phi_A)$ are obtained by extending this strip to the left and to the right skew periodically, which shows the desired result.
				\end{proof}
	
	Next, we make an observation about $J$ matrices.
	
	\begin{lemma}\label{generators}
		The $J$ matrices generate $SL_k(\Z)$.
	\end{lemma}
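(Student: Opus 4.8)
The plan is to prove that the subgroup $G\subseteq SL_k(\Z)$ generated by all the matrices $J_n$ is all of $SL_k(\Z)$, by exhibiting inside $G$ a known generating set. For $a\neq b$ and $c\in\Z$ let $E_{ab}(c)\in SL_k(\Z)$ denote the matrix obtained from $I_k$ by inserting the entry $c$ in position $(a,b)$; these elementary transvections satisfy $E_{ab}(c)E_{ab}(c')=E_{ab}(c+c')$, and it is a standard fact that the $E_{ab}:=E_{ab}(1)$ (over all $a\neq b$) generate $SL_k(\Z)$, since any integer determinant-one matrix reduces to $I_k$ by integer elementary row operations, each of which is left multiplication by a power $E_{ab}(c)=E_{ab}^{\,c}$. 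Thus it suffices to show $E_{ab}\in G$ for every $a\neq b$.

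I would do this in three steps; let $e_1,\dots,e_k$ be the standard basis of $\Z^k$. First, consider $P:=J_n$ with all linearization entries $j_{n\,2}=\dots=j_{n\,k}=0$; its columns are $e_2,\dots,e_k,(-1)^{k-1}e_1$, so $P$ is a signed cyclic-shift matrix, in particular of finite order, and hence $P^{-1}\in\langle P\rangle\subseteq G$. Second, for $i\in\{2,\dots,k\}$ let $T_i:=J_n$ with $j_{n\,i}=1$ and all other linearization entries zero; its columns are $e_2,\dots,e_k,(-1)^{k-1}e_1+e_i$. Comparing columns one checks at once that $T_i=P\,E_{i-1,\,k}$, so
\[
E_{i-1,\,k}=P^{-1}T_i\in G,\qquad i=2,\dots,k,
\]
which puts all transvections $E_{1,k},E_{2,k},\dots,E_{k-1,k}$ (those with second index $k$) inside $G$.

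Third, I would spread these across all index pairs by conjugating with $P$. Since $P$ permutes the coordinate axes cyclically up to a sign on the wrap-around axis, conjugation by $P$ sends $E_{ab}(1)$ to $E_{\sigma(a),\sigma(b)}(\varepsilon)$, where $\sigma=(1\,2\,\cdots\,k)$ and $\varepsilon\in\{1,-1\}$ (the sign $(-1)^{k-1}$ appears exactly when one of $a,b$ equals $k$). The induced map $(a,b)\mapsto(\sigma(a),\sigma(b))$ acts freely on $\{(a,b):a\neq b\}$ with all orbits of size $k$, and the $k-1$ pairs $\{(a,k):a\neq k\}$ lie in pairwise distinct orbits, hence meet every orbit; so iterating conjugation by $P$ on the transvections from Step 2 produces, for every $a\neq b$, some $E_{ab}(\pm1)\in G$. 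As $E_{ab}(-1)=E_{ab}(1)^{-1}$, we conclude $E_{ab}\in G$ for all $a\neq b$, and therefore $G=SL_k(\Z)$.

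The genuinely substantive point is Step 2 — recognizing the identity $T_i=P\,E_{i-1,k}$, which exhibits an elementary transvection directly as a product of two $J$ matrices; everything after that is routine. The only delicate part is keeping track of the signs $(-1)^{k-1}$ that $P$ introduces, but these are harmless since they only ever turn $E_{ab}$ into its inverse $E_{ab}(-1)$, which is again in $G$. (Equivalently, one can phrase the whole argument via paths: right multiplication by a $J$ matrix deletes the first column of a $k$-window and appends the unique admissible new last column, and the same reduction shows the identity window can be carried to any element of $SL_k(\Z)$ by finitely many such moves.)
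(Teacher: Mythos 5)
Your proof is correct and follows essentially the same route as the paper's: both reduce the problem to elementary transvections (the paper's shear matrices $S_{i\,j}(\lambda)$), and both realize these by combining powers of the signed cyclic-shift matrix $P$ (the $J$ matrix with all linearization entries zero) with a $J$ matrix having a single nonzero linearization entry. The only difference is bookkeeping: you transport the transvections $E_{a,k}$ to arbitrary index pairs by conjugating with $P$ and counting orbits, whereas the paper places the nonzero entry at position $i'\equiv i-j+1\pmod{k}$ and sandwiches with explicit powers $(J_n)^{j-1}$ and $(J_n)^{k-j}(J_n)^{k}$ --- the same computation packaged differently.
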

	
	\begin{proof}
		It is a classical result in linear algebra that shear matrices generate the special linear group, see for example \cite{a57}.
		Thus, it suffices to show that $J$ matrices generate shear matrices $S_{i\,j}(\lambda)$ which consist of the identity matrix with a single nonzero entry $\lambda$ in position $(i\,j)$ where $i\neq j$.
		
		Fix a shear matrix $S_{i\,j}(\lambda)$.
		Recall the structure of a $J$ matrix from Definition \ref{def:jmatrices}.
		Consider the power of a matrix $(J_n)^{j-1}$ where $J_n$ is the $J$ matrix with $j_{n\,\ell}=0$ for all $\ell\in\{2,\ldots,k\}$.
		This matrix $(J_n)^{j-1}$ has columns
		\[
		(e_j,\ldots,e_k,(-1)^{k-1}e_1,\ldots,(-1)^{k-1}e_{j-1}).
		\]
		Then, multiply by the $J$ matrix $J_m$ with a single nontrivial $j$-entry $j_{m,i'}$ in the last column such that  
		\[
		j_{m,i'}=\begin{cases}
		\lambda&\text{if }i>j\\
		(-1)^{k-1}\lambda&\text{if }i<j\\
		\end{cases}
		\]
		where $i'\equiv (i-j+1)\pmod{k}$ and $j_{m\,\ell}=0$ for $\ell\not=i'$.
		The resulting matrix $(J_n)^{j-1}J_m$ is of the form 
		\[
		(J_n)^{j-1}J_m=\left(e_{j+1},\ldots,e_k,(-1)^{k-1}e_1,\ldots,(-1)^{k-1}e_{j-1},(-1)^{k-1}(\lambda e_i+e_j)\right).
		\]
		Then multiply the above by $(J_n)^{k-j}$ on the right to obtain $(-1)^{k-1}S_{i\,j}(\lambda)$.
		Finally, to get the desired $S_{i\,j}(\lambda)$, multiply by $(J_n)^k$.
	\end{proof}
	
	From this, we develop the following technique of constructing a single path $\phi(\gamma,\delta)$ from a pair of paths $\gamma, \delta \in P_k$. 
	
		\begin{definition}\label{def:friezemap}
	Consider a pair of paths $\gamma, \delta\in P_k$ and a pair of integers $m,n\geq k$.  
	We define a periodic path with $m$ consecutive entries from $\gamma$ and $n$ consecutive entries from $\delta$ as follows. 
	Without loss of generality, we may select a labeling such that the desired elements of $\gamma$ and $\delta$ are $\{\gamma_1,\ldots,\gamma_m\}$ and $\{\delta_1,\ldots,\delta_n\}$ respectively.
	By Lemma \ref{generators}, there is a sequence of $J$ matrices $\{J_i\}_{i\in[\ell+k]}$ for some $\ell\in\Z$ such that
	\[
	(\gamma_{m-k+1},\ldots,\gamma_{m})J_1\cdots J_{\ell+k}=(\delta_1,\ldots, \delta_k).
	\]
	Let $\lambda_i$ with $i\in[\ell]$ be the last column of the product
	\[
	(\gamma_{m-k+1},\ldots,\gamma_{m})J_1\cdots J_i.
	\]
	Similarly, there is a sequence of $J$ matrices $\{J'_i\}_{i\in [q+k]}$ for some $q\in\Z$ such that
	\[
	(\delta_{n-k+1},\ldots,\delta_{n})J_1'\cdots J'_{q+k}=(-1)^{k-1}(\gamma_1,\ldots, \gamma_k).
	\]
	Let $\mu_i$ with $i\in[q]$ be the last column of the product
	\[
	(\delta_{n-k+1},\ldots,\delta_{n})J_1'\cdots J_i'.
	\]
	We obtain a matrix $A$ with columns
	\begin{equation}\label{eqn:deltaprime}
	A=(\gamma_1,\dots,\gamma_m,\lambda_1,\ldots,\lambda_{\ell},\delta_1,\ldots,\delta_n,\mu_1,\ldots,\mu_{q}).
	\end{equation}
	Note that, by construction, $A$ satisfies the property that all consecutive Pl\"ucker coordinates of $A$ are $1$.  Then we define a path 
	\[\phi_{m,n}(\gamma,\delta)= \phi_A \in \P_k\] 
	to be the skew periodic extension of $A$.
When obvious, we will omit the subscripts $m,n$ and write $\phi(\gamma,\delta)$.	
	\end{definition}

    Note that the matrix $A$ above depends on the particular choice of $J$ matrices needed to get from columns of $\gamma$ to $\delta$ and back.  We provide an example of the construction of a path $\phi(\gamma,\delta)$ below.

    \begin{example}\label{ex:friezemapping}
        Suppose we have a pair of paths $\gamma,\delta\in\Z^2$ defined as follows
        \[
        \gamma=\left(\cdots
        \begin{array}{rrrr}
           0&-1&-2&-1\\
           1&1&1&0
        \end{array}
        \cdots
        \right)\quad\text{and}\quad\delta=\left(
        \cdots\begin{array}{rrr}
             -4&1&2\\
             3&-1&-1
        \end{array}\cdots
        \right).
        \]
        We wish to construct a path $\phi_{3,2}(\gamma,\delta)$.  
        We start with the first three columns of $\gamma$
        \[
        \left(
        \begin{array}{rrr}
           0&-1&-2\\
           1&1&1
        \end{array}
        \right).
        \]
        Our goal is to use this starting place to create a path which reaches the first two columns of $\delta$, i.e.
        \[
        \left(
        \begin{array}{rrrrrr}
             0&-1&-2&\cdots&-4&1\\
             1&1&1&\cdots&3&-1
        \end{array}
        \right).
        \]
        We may do this by constructing an appropriate sequence of $J$ matrices which generate
        \[
        \left(
        \begin{array}{rr}
            -1&-2\\
            1&1
        \end{array}\right)^{-1}\left(\begin{array}{rr}
            -4&1\\
            3&-1
        \end{array}\right) = \left(\begin{array}{rr}
            2&-1\\
            1&0
        \end{array}\right).
        \]
        For example, we can express this matrix as a product of $J$ matrices as follows. 
        \[
       \left(\begin{array}{rr}
            2&-1\\
            1&0
        \end{array}\right)  = \left(\begin{array}{rr}
            0&-1\\
            1&-1
        \end{array}\right)
        \left(\begin{array}{rr}
            0&-1\\
            1&-2
        \end{array}\right)
        \left(\begin{array}{rr}
            0&-1\\
            1&-1
        \end{array}\right) =  J_1 J_2 J_3
        \]
        Thus, we obtain a single vector $\lambda_1=\left(\begin{array}{r}
            3\\
            -2
        \end{array}\right)$.
        We do the same in order to get from $(\delta_1, \delta_2)=\left(\begin{array}{rr}
            -4 & 1\\
            3 & -1
        \end{array}\right)$ back to the initial entry of $\gamma$ scaled by $-1$.
        A similar computation, yields a sequence of vectors $\mu$ consisting of a single vector $\mu_1=\left(\begin{array}{r}
        -1\\ 
        2\end{array}\right)$.
        The resulting path has the form
        \begin{align*}
        \phi_{3,2}(\gamma,\delta)&=(\cdots, \gamma_1,\gamma_2,\gamma_3,\lambda_1,\delta_1,\delta_2,\mu_1,-\gamma_1,-\gamma_2,-\gamma_3,\cdots)\\
        &=\left(\cdots
        \begin{array}{rrrrrrrrrr}
             0&-1&-2&3&-4&1&-1&0&1&2\\
             1&1&1&-2&3&-1&2&-1&-1&-1
        \end{array}
        \cdots
        \right).
        \end{align*}
    \end{example}

	From this partial data about $\gamma$ and $\delta$, we will be able to extract certain partial information about their image $\M=\Phi(\gamma,\delta)$ and then expand this to a variety of conclusions about the whole image $\M$.
	
	\begin{lemma}\label{lem:friezemapping}
		Let $\M:=\Phi(\gamma,\delta)$ be an array and let $M=\M_{\{1,\ldots,m\},\{1,\ldots,n\}}$ be an adjacent $m\times n$ sub-matrix in $\M$ with $m,n\geq k$.
		Then a tame $SL_k$-tiling $\M'=\Phi(\phi_{m,n}(\gamma,\delta), \phi_{m,n}(\gamma,\delta))$ from a frieze  has the property that the adjacent $m\times n$ sub-matrix
		\[M'=\M'_{\{1,\ldots,m\},\{i,\ldots,i+n-1\}}
		\] 
		is equal to $M$ for some $i$.
	\end{lemma}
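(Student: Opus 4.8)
The plan is to exploit the fact that $\phi:=\phi_{m,n}(\gamma,\delta)$ was built in Definition~\ref{def:friezemap} precisely so that it contains the relevant pieces of $\gamma$ and of $\delta$ as honest consecutive sub-strips; the matching window of $\M'=\Phi(\phi,\phi)$ can then simply be read off from the determinant formula defining $\Phi$. First I would recall that $\phi=\phi_A$ is the skew periodic extension of the matrix
\[
A=(\gamma_1,\dots,\gamma_m,\lambda_1,\dots,\lambda_\ell,\delta_1,\dots,\delta_n,\mu_1,\dots,\mu_q)
\]
of \eqref{eqn:deltaprime}, all of whose consecutive Pl\"ucker coordinates equal $1$. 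Hence, by Lemma~\ref{lem:frieze} together with Theorem~\ref{thm:pluckerfrieze}, the array $\M'=\M_{\F_{(k,N)}(A)}$ with $N=m+\ell+n+q$ is a tame $SL_k$-tiling coming from a frieze, which already establishes the first assertion. (One also records that $\phi_A\in\P_k$, which is built into Definition~\ref{def:matrixpath}, so that $\Phi(\phi_A,\phi_A)$ is defined.)

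Next I would locate the window. By construction $A$ places $\gamma_1,\dots,\gamma_m$ in its first $m$ columns and $\delta_1,\dots,\delta_n$ in columns $m+\ell+1,\dots,m+\ell+n$, and since these columns lie in the fundamental block $A$ — before any skew periodic twist — we have $(\phi_A)_s=\gamma_s$ for $s\in\{1,\dots,m\}$ and $(\phi_A)_{m+\ell+j}=\delta_j$ for $j\in\{1,\dots,n\}$, with no sign change. Taking $i:=m+\ell+1$, the entry of $\M'$ in position $(s,i+t)$, for $s\in\{1,\dots,m\}$ and $t\in\{0,\dots,n-1\}$, is
\begin{align*}
m'_{s,\,i+t} &= \det\big((\phi_A)_s,\dots,(\phi_A)_{s+k-2},(\phi_A)_{m+\ell+1+t}\big) \\
&= \det(\gamma_s,\dots,\gamma_{s+k-2},\delta_{1+t}) = m_{s,\,1+t},
\end{align*}
which is exactly the $(s,1+t)$-entry of $M$. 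Thus $\M'_{\{1,\dots,m\},\{i,\dots,i+n-1\}}=M$, as desired.

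The one place that needs care — and where I expect the only real friction — is the bottom rows of the window: row $s$ of $M$ involves $\gamma_s,\dots,\gamma_{s+k-2}$, so for $s$ near $m$ the indices $s+1,\dots,s+k-2$ run past the $\gamma$-block of $A$ into the filler vectors $\lambda_\bullet$, where $\phi_A$ no longer agrees with $\gamma$. To make the displayed identity valid for every $s\in\{1,\dots,m\}$ one should therefore arrange that $\phi$ contains $\gamma_1,\dots,\gamma_{m+k-2}$, i.e., in Definition~\ref{def:friezemap} one grabs $m+k-2$ rather than $m$ consecutive entries of $\gamma$; this changes nothing essential, since $\phi$ is still the skew periodic extension of a matrix all of whose consecutive maximal minors are $1$, and it merely shifts the index $i$ above by $k-2$. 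With this bookkeeping in place the determinant identity holds identically in $s$, and the lemma follows.
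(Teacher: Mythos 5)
Your proposal is correct and follows essentially the same route as the paper: invoke Lemma~\ref{lem:frieze} to see that $\M'$ is a tame $SL_k$-tiling from a frieze, then read off the window starting at column $i=m+\ell+1$ directly from the determinant formula defining $\Phi$. The ``friction'' you flag at the bottom rows is real, and your fix is exactly the one the paper silently makes in its own proof, which constructs $\phi_{m+k-2,n}(\gamma,\delta)$ rather than the $\phi_{m,n}(\gamma,\delta)$ appearing in the lemma's statement.
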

	
	\begin{proof}
		For paths $\gamma$ and $\delta$ use equation (\ref{eqn:deltaprime}) to construct a matrix $A$ of size $k\times (m+\ell+n+q)$ and a path $\phi(\gamma,\delta)=\phi_{m+k-2,n}(\gamma,\delta)$ as in Definition \ref{def:friezemap}.  Note that by construction $\phi(\gamma,\delta)_{[m]}=\gamma_{[m]}$ and $\phi(\gamma,\delta)_{[m+\ell]^n}=\delta_{[n]}$.
		The matrix $A$ satisfies the condition that all consecutive Pl\"ucker coordinates equal 1, so Lemma~\ref{lem:frieze} shows that $\M' = M_{\F_{(k,m+\ell+n+q)}(A)}$ is a tame $SL_k$-tiling.
		
				Let $M'$ be a submatrix of $\M'$ of size $m\times n$ with upper-left entry $p_{1,\ldots,k-1,m+\ell+1}(A)$.
		By construction, this is given by the determinant of the matrix $(\gamma_1,\cdots,\gamma_{k-1},\delta_1)$, and is therefore equal to $m_{1,1}$ in $\M$.
		The same holds for all other entries of $M'$.  
	\end{proof}

	This finally allows us to show that $\Phi$, and hence $\widetilde{\Phi}$, are well-defined.
	
	\begin{proposition}\label{phi}
		The map $\Phi:(\P_k\times\P_k)/SL_k(\Z)\rightarrow\slk$ is well-defined.
	\end{proposition}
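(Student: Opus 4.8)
The plan is to verify that the array $\M=\widetilde{\Phi}(\gamma,\delta)$ is in fact a tame $SL_k$-tiling for every pair $(\gamma,\delta)\in\P_k\times\P_k$; once this is established, the commutative diagram preceding the statement together with Lemma~\ref{lem:invariance} shows that the induced map $\Phi$ is well-defined as a map into $\slk$. So everything reduces to checking two things about $\M$: that every adjacent $k\times k$ submatrix has determinant $1$, and that every adjacent $(k+1)\times(k+1)$ submatrix has determinant $0$.

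Both of these are local conditions about a single fixed adjacent submatrix of $\M$, so I would fix such a submatrix, say with top-left entry $m_{i\,j}$. Since shifting the index sets of $\gamma$ and $\delta$ merely shifts the indices of $\M$ (directly from Definition~\ref{phihat}), I may relabel and assume the submatrix in question is $\M_{\{1,\ldots,r\},\{1,\ldots,r\}}$ with $r=k$ in the first case and $r=k+1$ in the second; in either case $r\geq k$, so Lemma~\ref{lem:friezemapping} applies with $m=n=r$. That lemma produces a path $\phi_{r,r}(\gamma,\delta)$ such that the tiling $\M'=\Phi(\phi_{r,r}(\gamma,\delta),\phi_{r,r}(\gamma,\delta))$ is a tame $SL_k$-tiling coming from a frieze, and such that some adjacent $r\times r$ submatrix $M'$ of $\M'$ equals $\M_{\{1,\ldots,r\},\{1,\ldots,r\}}$.

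Now I just read off the conclusion. When $r=k$, $\det M'=1$ because $\M'$ satisfies the $SL_k$-property, hence the chosen $k\times k$ submatrix of $\M$ has determinant $1$. When $r=k+1$, $\det M'=0$ because $\M'$ is tame, hence the chosen $(k+1)\times(k+1)$ submatrix of $\M$ has determinant $0$. Since the adjacent submatrix was arbitrary, $\M$ is a tame $SL_k$-tiling, i.e.\ $\widetilde{\Phi}(\gamma,\delta)\in\slk$, and therefore $\Phi$ is well-defined.

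The real work has already been done in Lemmas~\ref{lem:frieze}, \ref{generators}, and \ref{lem:friezemapping}; the one subtle point to be careful about is that Lemma~\ref{lem:friezemapping} is stated only for a submatrix anchored at position $(1,1)$, so one must justify the reduction of an arbitrary adjacent submatrix to this normalized position using the evident translation-equivariance of $\widetilde{\Phi}$ in the indices of $\gamma$ and $\delta$. There is essentially no other obstacle: the $SL_k$-property and tameness are inherited pointwise from the frieze tilings, which are tame $SL_k$-tilings by Theorem~\ref{thm:pluckerfrieze} and Lemma~\ref{lem:frieze}.
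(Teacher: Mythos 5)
Your proof is correct and follows essentially the same route as the paper's: both reduce the $SL_k$-property and tameness of $\widetilde{\Phi}(\gamma,\delta)$ to Lemma~\ref{lem:friezemapping}, embedding each adjacent $k\times k$ or $(k+1)\times(k+1)$ submatrix into a tame $SL_k$-tiling coming from a frieze, and both handle arbitrary positions by noting that the indexing of $\gamma$ and $\delta$ can be shifted freely. The paper's write-up is slightly terser but the argument is the same.
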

	
	\begin{proof}
		Let $(\gamma,\delta)\in(\P_k\times\P_k)/SL_k(\Z)$ and $\M=\Phi(\gamma,\delta)$.
        We wish to show that $\M\in\slk$.
		By Lemma \ref{lem:friezemapping}, we can map $M=\M_{\{1,\ldots,m\},\{1,\ldots,n\}}$ to a portion of a tame $SL_k$-tiling $\M'$.
		Observe that in equation (\ref{eqn:deltaprime}), our choice of indexing for $\gamma$ and $\delta$ was arbitrary.
		Therefore, we may adjust our labels on $\gamma$ and $\delta$ such that the upper left corner of $M$ may be any entry in $\M$.
		Therefore, since $\M'\in\slk$, we have that, in particular, every adjacent $k\times k$ sub-matrix is in $SL_k(\Z)$ and every adjacent $(k+1)\times (k+1)$ adjacent sub-matrix has determinant $0$.
		Thus, $\M$ is in $\slk$.
	\end{proof}

	\subsection{Transition Matrices}
	We begin by formally defining transition matrices for tilings. 
	Then we present a series of results which will allow us to compare transition matrices of tilings and paths.

    \begin{definition}\label{def:horizontalandverticaltransitions}
        Given an $SL_k$-tiling $\M$ we write $H_j$ for the \emph{ horizontal transition matrix} of $\M$ which transitions from column $j+k-1$ to $j+k$, i.e. for all $i$
        \[
        M_{i\,j}H_{j}=M_{i\,j+1}.
        \]
        Similarly, we write $V_i$ for the \emph{vertical transition matrix} which transitions from row $i+k-1$ to $i+k$, i.e. for all $j$
        \[
        M_{i\,j}^TV_{i}=M_{i+1\,j}^T.
        \]
    \end{definition}

Note that these transition matrices are in particular $J$ matrices. 
    With this notation in mind, we make the following observation.
	Let $\M=\M_F$ be a tiling from a frieze $F$ with entries in some integral domain.
	We may index $\M$ such that $M_{0\,0}:=M$ is of the following form.
	\[
	M=\begin{pmatrix}
	1&m_{1\,1}&m_{1\,2}&\cdots&m_{1\,(k-1)}\\
	0&1&m_{2\,2}&\cdots&m_{2\,(k-1)}\\
	&\ddots&\ddots&\cdots&\vdots\\
	&&0&1&m_{(k-1)\,(k-1)}\\
	\textbf{0}&&&0&1
	\end{pmatrix}.
	\]
	In order to transition from $M$ to the $k$-th column of $\M$, we use a horizontal transition matrix $H_0$ of $\M$ which takes the form of a $J$ matrix.
	In particular,
	\begin{equation}\label{eqn:friezetransition}
	M
		\begin{pmatrix}
		(-1)^{k-1}\\
		j_2\\
		\vdots\\
		j_{k-p+1}\\
		\vdots\\
		j_{k}
		\end{pmatrix}=
	\begin{pmatrix}
	m_{1\,k}\\
	m_{2\,k}\\
	\vdots\\
	m_{k-p+1\,k}\\
	\vdots\\
	m_{k\,k}
	\end{pmatrix},
	\end{equation}
	for $p\in[k]$ where $(
		(-1)^{k-1},
		j_2,
		\ldots,
		j_{k-p+1},
		\ldots,
		j_{k}
		)^T$
	is the final column of $H_0$.
    This gives rise to the following lemma.
	
	\begin{lemma}\label{lem:jmatrixminor}
		Let $\M=\M_F$ be a tiling from a frieze $F$ and let $(
			(-1)^{k-1},
			j_2,
			\cdots,
			j_{k-p+1},
			\cdots,
			j_{k}
			)^T$ be the final column of the horizontal transition matrix $H_0$.
		Then $j_{k-p+1}=(-1)^{p-1}M_p$ where $M_p$ is the $p$-th adjacent minor of $\M_{[k],[k]}$ whose bottom right corner is $m_{k\,k}$.
	\end{lemma}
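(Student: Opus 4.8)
The plan is to read off $j_{k-p+1}$ from the linear system (\ref{eqn:friezetransition}) by Cramer's rule, and then to use the shape of $M$ to collapse the resulting $k\times k$ determinant down to the $p\times p$ minor $M_p$. The key observation at the outset is that $M$ is upper triangular with every diagonal entry equal to $1$, so $\det M = 1$ and Cramer's rule applies without introducing denominators. Writing $b = (m_{1\,k},\dots,m_{k\,k})^{T}$ for the right-hand side of (\ref{eqn:friezetransition}) and recalling that the final column of $H_0$ is $w = ((-1)^{k-1},j_2,\dots,j_k)^{T}$, Cramer's rule yields, for $1\le p\le k-1$,
\[
j_{k-p+1} = w_{k-p+1} = \det\bigl(M^{(k-p+1)}\bigr),
\]
where $M^{(k-p+1)}$ is obtained from $M$ by replacing its $(k-p+1)$-st column with $b$.

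Next I would analyse the block structure of $M^{(k-p+1)}$ relative to the splitting $k = (k-p) + p$. Its first $k-p$ columns are untouched columns of $M$, and since $M$ is upper triangular, its $\ell$-th column vanishes in all rows below row $\ell$; for $\ell \le k-p$ this forces those columns to be zero throughout rows $k-p+1,\dots,k$. Hence $M^{(k-p+1)}$ is block upper triangular with respect to this splitting, its top-left $(k-p)\times(k-p)$ block being a unit upper triangular submatrix of $M$, of determinant $1$. Therefore $\det(M^{(k-p+1)})$ equals the determinant of the bottom-right $p\times p$ block, whose columns are $b, M_{k-p+2},\dots,M_k$, in this order, each restricted to rows $k-p+1,\dots,k$, where $M_\ell$ denotes the $\ell$-th column of $M$.

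Finally I would identify this $p\times p$ determinant with $M_p$: the columns of $M$ are $k$ consecutive columns of $\M$ and $b$ is the column of $\M$ immediately to the right of $M$, so $M_{k-p+2},\dots,M_k,b$ is a block of $p$ consecutive columns of $\M$, and restricting it to the bottom $p$ of those $k$ rows produces exactly the $p\times p$ adjacent submatrix of $\M$ whose bottom-right entry is $m_{k\,k}$, i.e.\ the submatrix with determinant $M_p$; moving $b$ from the first to the last column is a cyclic permutation on $p$ symbols, of sign $(-1)^{p-1}$, so that
\[
j_{k-p+1} = \det\bigl(M^{(k-p+1)}\bigr) = (-1)^{p-1} M_p,
\]
which is the claim. (The degenerate case $p=k$ is the defining one, $j_{k-p+1}$ being the top-right entry $(-1)^{k-1}$ of $H_0$, consistently with $M_k = \det M_{0\,1} = 1$ since $M_{0\,1}$ is an adjacent $k\times k$ submatrix of $\M$.) The argument is elementary linear algebra, and I expect the only real obstacle to be the index and sign bookkeeping: confirming that $b$ is reinserted into exactly the column slot that makes the lower-left $(k-p)$ columns of $M^{(k-p+1)}$ vanish in the last $p$ rows, so the block-triangular split is valid, and verifying that the cyclic reordering of the lower $p\times p$ block contributes precisely $(-1)^{p-1}$ and nothing more.
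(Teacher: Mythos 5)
Your proof is correct, but it takes a genuinely different route from the paper's. The paper proves the identity by strong induction on $p$: it expands the minor $M_p$ by cofactors along its top row and identifies each complementary $(p-1)\times(p-1)$ minor with $(-1)^{p-q-1}j_{k-p+1+q}$ via the inductive hypothesis, then matches the result against the linear relation $j_{k-p+1}=m_{(k-p+1)\,k}-\sum_{i=1}^{p-1}j_{k-i+1}m_{(k-p+1)\,(k-i)}$ read off from the system. You instead invert the system once and for all: since $M$ is unit upper triangular, Cramer's rule gives $j_{k-p+1}=\det(M^{(k-p+1)})$ with no denominators, the untouched first $k-p$ columns vanish in the last $p$ rows so the determinant collapses to the bottom-right $p\times p$ block, and a cyclic column shift of sign $(-1)^{p-1}$ turns that block into the adjacent minor $M_p$. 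Your argument is non-inductive and shorter, it handles all $p\in[k]$ uniformly (the $p=k$ case you flag as degenerate in fact follows from the same computation, since $\det M_{0\,1}=1$), and like the paper's proof it uses only that $\det M$ is a unit, so it transfers verbatim to the setting of Proposition~\ref{prop:jentries} where the entries are Pl\"ucker coordinates rather than integers. The paper's induction, by contrast, produces along the way the explicit recursion expressing each $j_{k-p+1}$ in terms of the earlier ones, which is mildly informative but not needed elsewhere. The only bookkeeping you should double-check is already the bookkeeping you identified: that $b$ lands in column $k-p+1$, so the block-triangular split is with respect to $k=(k-p)+p$ and the top-left block is unit upper triangular of determinant $1$.
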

	
	\begin{proof}
		We proceed by strong induction.
		Take as a base case $p=1$.
		By equation (\ref{eqn:friezetransition}) and the upper triangular structure of matrix $M$, clearly $j_{k}=m_{k\,k}$.
		Additionally, the $1$ minor is $|m_{k\,k}|=m_{k\,k}$.
		Thus, $j_{k}=(-1)^{1-1}m_{k\,k}=m_{k\,k}$, as desired, and the base case holds.
		
		Suppose the statement holds for $j_{k-i+1}$ where $i\in[p-1]$ with $p\leq k$.
		Consider the case $j_{k-p+1}$.
		By equation (\ref{eqn:friezetransition}), we see that
		\begin{equation}\label{eqn:jentries}
		j_{k-p+1}=m_{(k-p+1)\,k}-\sum_{i=1}^{p-1}j_{k-i+1}m_{(k-p+1)\,(k-i)}.
		\end{equation}
		On the other hand, the minor $M_p$ has the form
		\[
		M_p=\left|
		\begin{array}{ccccc}
		m_{(k-p+1)\,(k-p+1)}&m_{(k-p+1)\,(k-p+2)}&&\cdots&m_{(k-p+1)\,k}\\
		1&m_{(k-p+2)\,(k-p+2)}&&\cdots&m_{(k-p+2)\,k}\\
		&\ddots&&\cdots&\vdots\\
		&&1&m_{(k-1)\,(k-1)}&m_{(k-1)\,k}\\
		\textbf{0}&&&1&m_{k\,k}
		\end{array}
		\right|.
		\]
		We compute the determinant by going across the top row.
		In the expression for $M_p$, denote by $r_q$ the $(p-1)$-st minor associated with the term $m_{(k-p+1)\,(k-p+q)}$ where $q<p$ from the top row.
		This minor $r_q$ is of the form
		\[
		r_q=\left|
		\begin{array}{ccc|cccc}
		1&&*&\\
		&\ddots&&&*\\
		\textbf{0}&&1\\
		\hline
		&&&m_{(k-p+q+1)\,(k-p+q+1)}&\cdots&&m_{(k-p+q+1)\,k}\\
		&\textbf{0}&&1&\cdots&&m_{(k-p+q+2)\,k}\\
		&&&&\ddots&&\vdots\\
		&&&&&1&m_{k\,k}
		\end{array}
		\right|.
		\]
		This determinant is given entirely by the determinant of the lower-right block of size $p-q$, which, by the inductive hypothesis, is $(-1)^{p-q-1}j_{k-p+1+q}$.
		Thus, for $q<p$ each of the terms in the expression for $M_p$ are given by 
			\[
			(-1)^{q-1}m_{(k-p+1)\,(k-p+q)}r_q=(-1)^{q-1}m_{(k-p+1)\,(k-p+q)}(-1)^{p-q-1}j_{k-p+1+q}=(-1)^pj_{k-p+1+q}m_{(k-p+1)\,(k-p+q)}.
			\]
		When $q=p$, $r_p$ is given by $(-1)^{p-1}m_{(k-p+1)\,k}$ times the identity matrix.
		Thus, we have that the minor $M_p$ is given by
		\[
		M_p=(-1)^{p-1}m_{(k-p+1)\,k}+(-1)^p\sum_{i=1}^{p-1}j_{k-i+1}m_{(k-p+1)\,(k-i)}.
		\]
		Multiplying equation (\ref{eqn:jentries}) by $(-1)^{p-1}$ gives us that
		\[
		(-1)^{p-1}j_{k-p+1}=(-1)^{p-1}m_{(k-p+1)\,k}+(-1)^p\sum_{i=1}^{p-1}j_{k-i+1}m_{(k-p+1)\,(k-i)}=M_p,
		\]
		as desired.
	\end{proof}
	
	We generalize this result to give us the entries of all $J$ matrices in a Pl\"ucker frieze.
	We use the notation $j_{pq}$ for the entry in row $q$ in the last column of the horizontal transition matrix $H_p$.  We use analogous notation for vertical transition matrices.  
	
	\begin{proposition}\label{prop:jentries}
		
			Let $\M=\M_{\F(k,n)}$ such that $m_{1\,1}=p_{o([1]^{k-1},1)}$.
            \begin{enumerate}
                \item [(a)] The entry $j_{p\,q+1}$ of $H_p$ is given by
			\[
			j_{p\,q+1}=(-1)^{k-q-1}p_{o([p]^q,[p+q+1]^{k-q})}.
			\]
                \item [(b)]The entry $j_{p\,q+1}$ of $V_p$ is given by
			\[
			j_{p\,q+1}=(-1)^{k-q-1}p_{o([p+q-1]^{k-q},[p+k]^q)}.
			\]
            \end{enumerate}

	\end{proposition}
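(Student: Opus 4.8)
The plan is to read each transition matrix off the unique linear dependence it encodes among rows or columns of $\M$, and then to evaluate the resulting determinants with Proposition~\ref{prop:pluckerdeterminant}. Fix a $k\times n$ integer matrix $A=(a_1,\dots,a_n)$ with all consecutive Pl\"ucker coordinates equal to $1$ realizing $\M=\M_{\F(k,n)}$, extended skew-periodically to $(a_j)_{j\in\Z}$; by Lemma~\ref{lem:frieze} together with this extension one has $m_{i\,j}=\det(a_i,\dots,a_{i+k-2},a_j)$ for all $i,j$. For part~(a), the last column of $H_p$ is $\bigl((-1)^{k-1},j_{p\,2},\dots,j_{p\,k}\bigr)^{\!T}$, where $\mathrm{Col}_{p+k}=(-1)^{k-1}\mathrm{Col}_p+j_{p\,2}\mathrm{Col}_{p+1}+\dots+j_{p\,k}\mathrm{Col}_{p+k-1}$. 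Because each row of $\M$ is $\bigl(\det(a_i,\dots,a_{i+k-2},a_j)\bigr)_j$, a \emph{linear} function of the single column $a_j$, this column relation is exactly the image of the relation $a_{p+k}=(-1)^{k-1}a_p+j_{p\,2}a_{p+1}+\dots+j_{p\,k}a_{p+k-1}$ in $\Z^k$, which is the unique such relation because $a_p,\dots,a_{p+k-1}$ is a basis (their determinant is the consecutive coordinate $p_{o([p]^k)}=1$); this is the recursion already underlying Lemma~\ref{lem:jmatrixminor}. Cramer's rule then gives, for $q\in[k-1]$, that $j_{p\,q+1}=\det(a_p,\dots,a_{p+q-1},a_{p+k},a_{p+q+1},\dots,a_{p+k-1})$ (the denominator being $p_{o([p]^k)}=1$), and moving $a_{p+k}$ to the last slot, past $k-q-1$ columns and hence at the cost of the sign $(-1)^{k-q-1}$, leaves $p_{o([p]^q,[p+q+1]^{k-q})}$. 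Equivalently one recognizes this numerator as $\det A_{\underline m;r}$ with $\underline m=[p+q+1]^{k-q}$ and $r=p-k+q+1$, checks conditions (c1)--(c2), and invokes Proposition~\ref{prop:pluckerdeterminant}, whose product of consecutive coordinates collapses to $1$.

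Part~(b) runs along the same lines for rows, the difference being that $m_{i\,j}$ is \emph{multilinear}, not linear, in the data carried by the row index; so I would pass to $\bigwedge^{k-1}\Z^k\cong\Z^k$. Putting $w_i:=a_i\wedge\cdots\wedge a_{i+k-2}$, the $i$-th row of $\M$ is the image of $w_i$ under the fixed injective linear map $w\mapsto\bigl(\langle w\wedge a_j\rangle\bigr)_j$, so the row relation $\mathrm{Row}_{p+k}=(-1)^{k-1}\mathrm{Row}_p+j_{p\,2}\mathrm{Row}_{p+1}+\dots+j_{p\,k}\mathrm{Row}_{p+k-1}$ lifts to the identical relation among $w_p,\dots,w_{p+k}$; here $w_p,\dots,w_{p+k-1}$ is a basis of $\bigwedge^{k-1}\Z^k$ because the matrix pairing these vectors against $a_p,\dots,a_{p+k-1}$ is simply $M_{p\,p}\in SL_k(\Z)$. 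Cramer's rule now expresses $j_{p\,q+1}$ as $(-1)^{k-q-1}$ times the determinant of $\M$ restricted to the almost-consecutive row set $[p]^q\cup[p+q+1]^{k-q}$ and the consecutive column set $[p]^k$. This last determinant is not a direct instance of Proposition~\ref{prop:pluckerdeterminant}---its rows are not consecutive---and collapsing it to the single coordinate $p_{o([p+q-1]^{k-q},[p+k]^q)}$ is the heart of~(b), to be carried out either by an induction on $k-q$ using the Pl\"ucker relations~\eqref{eqn:pluckerrel}, or by establishing the variant of Proposition~\ref{prop:pluckerdeterminant} in which the consecutive index set is the one adjoined to each row rather than the one varying with the rows. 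An alternative route I would keep in reserve is to transpose: $\M^{T}$ is again a tiling from a frieze, in fact a Pl\"ucker frieze tiling $\M_{\F(k,n)(A')}$ coming from the frieze read upside-down, and the vertical transitions of $\M$ are the horizontal transitions of $\M^{T}$, so part~(a) applies once one identifies the involution of $[n]$ relating the Pl\"ucker coordinates of $A'$ to those of $A$.

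The main obstacle is not conceptual but lies in the bookkeeping of signs and indices. In~(a) one must reconcile the sign $(-1)^{k-q-1}$ coming from the Cramer reorderings with the cyclic re-sorting hidden in the operator $o(\,\cdot\,)$, which becomes relevant precisely when $p$ is near the ``seam'' of the $n$-periodicity of $\M$. In~(b) one must actually carry through the Pl\"ucker-relation reduction (or prove the needed variant of Proposition~\ref{prop:pluckerdeterminant}, or pin down the transpose correspondence together with its index involution) and confirm that it produces exactly the coordinate $p_{o([p+q-1]^{k-q},[p+k]^q)}$, with the claimed support, rather than a product or a coordinate on a shifted index set.
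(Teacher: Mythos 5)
Your part (a) is correct and takes a genuinely more direct route than the paper. The paper first proves Lemma~\ref{lem:jmatrixminor} (expressing the entries of a horizontal transition matrix as signed adjacent minors of the upper-unitriangular block $A_{[p+k-1]^k;p}$) and then evaluates those adjacent minors with Proposition~\ref{prop:pluckerdeterminant}. You instead observe that each column of $\M$ is the image of the corresponding column $a_j$ of the representing matrix under a fixed linear map, so the column recurrence defining $H_p$ lifts to the unique expansion of $a_{p+k}$ in the basis $a_p,\dots,a_{p+k-1}$, and Cramer's rule plus one column transposition count gives the formula in one step. This collapses the paper's two lemmas into a single computation at the level of $A$ rather than at the level of the tiling entries; your sign $(-1)^{k-q-1}$ and index set $[p]^q\cup[p+q+1]^{k-q}$ check out against the paper's Example~\ref{ex:verticaljentries}. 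The seam/reordering issues you flag for $o(\cdot)$ are glossed over equally by the paper, so I would not count them against you.

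Part (b), however, is left with a genuine gap, and you say so yourself: your exterior-power setup correctly reduces $j_{p\,q+1}$ of $V_p$ to $(-1)^{k-q-1}$ times the minor of $\M$ on rows $[p]^q\cup[p+q+1]^{k-q}$ and columns $[p]^k$, but that minor has \emph{non-adjacent} rows, so Proposition~\ref{prop:pluckerdeterminant} does not apply and collapsing it to the single coordinate $p_{o([p+q-1]^{k-q},[p+k]^q)}$ genuinely requires a Pl\"ucker-relation reduction (already for $k=3$, $q=1$ one needs the relation with $I=\{p,p+3\}$, $J=\{p+1,\dots,p+4\}$), which you do not carry out. The route you ``keep in reserve'' is essentially the paper's actual proof of (b), and it is the one that avoids this difficulty: since $V_i$ is the horizontal transition of $\M^T$ and determinants are transpose-invariant, Lemma~\ref{lem:jmatrixminor} applied to a suitably positioned \emph{lower}-unitriangular adjacent block again expresses $j_{p\,q+1}$ as a signed \emph{adjacent} minor of $\M$ (namely $\det A_{[p+q-1]^{k-q};\,p+q+1}$), which Proposition~\ref{prop:pluckerdeterminant} evaluates directly to $p_{o([p+q-1]^{k-q},[p+k]^q)}$; no new variant of that proposition and no Pl\"ucker-relation induction is needed. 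So to complete (b) you should either commit to the transpose route and do the index bookkeeping, or actually perform the Pl\"ucker-relation induction you sketch; as written, the heart of (b) is still missing.
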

	
	\begin{proof}
            We begin by proving part (a).
			The horizontal transition matrix $H_{p+k-1}$ is a transition between the adjacent submatrix
            \[
            A_{[p+k-1]^k;p}=\begin{pmatrix}
                p_{o([p]^{k-1},p+k-1)} & p_{o([p]^{k-1},p+k)} & \cdots & p_{o([p]^{k-1},p+2k-2)}\\
                p_{o([p+1]^{k-1},p+k-1)} & p_{o([p+1]^{k-1},p+k)} & \cdots & p_{o([p+1]^{k-1},p+2k-2)}\\
                \vdots & \vdots & \ddots & \vdots\\
                p_{o([p+k-1]^{k-1},p+k-1)} & p_{o([p+k-1]^{k-1},p+k)} & \cdots & p_{o([p+k-1]^{k-1},p+2k-2)}
            \end{pmatrix}
            \]
             as in Definition \ref{def:pluckermatrix} and the next column of $\M$.
			By Lemma \ref{lem:jmatrixminor}, this means $j_{p+k-1\,q+1}$ is equal to $(-1)^{k-q-1}$ times the adjacent $(k-q)$-minor of $A_{[p+k]^{k};p}$ aligned with the bottom-right corner.  This corresponds to the matrix $A_{[p+k+q]^{k-q};p+q}$.  Note that it was not important in the proof of Lemma \ref{lem:jmatrixminor} that the entries $m_{ij}$ were integers, so we may apply it here where the entries are Pl\"ucker coordinates. 
			By Proposition \ref{prop:pluckerdeterminant}, we have that
			\[
			\det(A_{[p+k+q]^{k-q};p+q})=p_{o([(p+q)+(k-q)-1]^{k-(k-q)},[p+k+q]^{k-q})}=p_{o([p+k-1]^q,[p+k+q]^{k-q})}.
			\]
			Thus, we have
			\[
			j_{k+p-1\,q+1}=(-1)^{k-q-1}p_{o([p+k-1]^q,[p+k+q]^{k-q})}.
			\]
			Setting $p$ to $p-k+1$, we obtain the desired result.

            Part (b) follows similarly by using transposes of the appropriate matrices.
	\end{proof}

    We show an example of how the formulas in parts (a) and (b) of Proposition \ref{prop:jentries} are related.

    \begin{example}\label{ex:verticaljentries}
    We may first use the formula in Proposition \ref{prop:jentries} part (a) to construct the sequence of the final columns of the $J$ matrices of a Pl\"ucker frieze, coming from the horizontal transition matrices of the corresponding tiling.
    For example, if we take $k=4$, we have the following sequence starting with $p=1$:
    \[
    \left(\cdots,
    \left(
    \begin{array}{r}
    -1\\
    p_{1\,3\,4\,5}\\
    -p_{1\,2\,4\,5}\\
    \textcolor{blue}{p_{1\,2\,3\,5}}
    \end{array}
    \right)
    ,\left(
    \begin{array}{r}
    -1\\
    p_{2\,4\,5\,6}\\
    \textcolor{blue}{-p_{2\,3\,5\,6}}\\
    p_{2\,3\,4\,6}
    \end{array}
    \right)
    ,\left(
    \begin{array}{r}
    -1\\
    \textcolor{blue}{p_{3\,5\,6\,7}}\\
    -p_{3\,4\,6\,7}\\
    p_{3\,4\,5\,7}
    \end{array}
    \right)
    ,\left(
    \begin{array}{r}
    \textcolor{blue}{-1}\\
    p_{4\,6\,7\,8}\\
    -p_{4\,5\,7\,8}\\
    p_{4\,5\,6\,8}
    \end{array}
    \right)
    ,\cdots\right).
    \]
    We may similarly use the formula from Proposition \ref{prop:jentries} part (b) to construct the sequence for vertical transition matrices starting with $p=1$:
    \[
    \left(\cdots,\textcolor{blue}{\left(
    \begin{array}{r}
    -1\\
    p_{1\,2\,3\,5}\\
    -p_{2\,3\,5\,6}\\
    p_{3\,5\,6\,7}
    \end{array}
    \right)}
    ,\left(
    \begin{array}{r}
    -1\\
    p_{2\,3\,4\,6}\\
    -p_{3\,4\,6\,7}\\
    p_{4\,6\,7\,8}
    \end{array}
    \right)
    ,\left(
    \begin{array}{r}
    -1\\
    p_{3\,4\,5\,7}\\
    -p_{4\,5\,7\,8}\\
    p_{5\,7\,8\,9}
    \end{array}
    \right)
    ,\left(
    \begin{array}{r}
    -1\\
    p_{4\,5\,6\,8}\\
    -p_{5\,6\,8\,9}\\
    p_{6\,8\,9\,10}
    \end{array}
    \right)
    ,\cdots\right).
    \]
    Note that column entries along the ascending diagonals of one sequence form the columns of the other.
    \end{example}

    With this example as guidance, we define an operation and notation on paths which alters their $J$ matrices as above. 

    \begin{definition}\label{def:tildepaths}
        Given a path $\gamma\in \P_k$ with $J$ matrices $\{J_i\}_{i\in\Z}$, define a new path $\widetilde{\gamma}\in \P_k$ such that $(\widetilde{\gamma})_{[k]}=(\gamma_{[k]})^T$ and the $J$ matrices of $\widetilde{\gamma}$ written $\{\widetilde{J_i}\}_{i\in\Z}$ have entries in the final column given by
        \[
        \widetilde{j_{i\,q}}=\begin{cases}
            j_{i\,1}=(-1)^{k-1}&q=1\\
            (-1)^kj_{i+q-2,\,k-q+2}&\text{else}
        \end{cases}
        \]
        for $i\in\Z$ and $q\in[k]$.
    \end{definition}

    \begin{remark}\label{rem:tildeshift}
        Note that $\widetilde{\widetilde{J_i}}$, the $J$ matrices of $\widetilde{\widetilde{\gamma}}$, equal the $J$ matrices of $\gamma$, up to a shift in indexing by $k-2$, i.e.
        \[
        \widetilde{\widetilde{J_i}}=J_{i+k-2}
        \]
        for all $i\in\Z$.
    \end{remark}
    
    Next, we obtain results about the transition matrices of tilings.
    
    \begin{lemma}\label{lem:horizontaljmatrices}
        The horizontal transition matrices of the tiling $\M:=\Phi(\gamma,\delta)$ are equal to the $J$ matrices of $\delta$.
    \end{lemma}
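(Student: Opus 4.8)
The plan is to unwind the definitions of the horizontal transition matrix $H_j$ of $\M$ and of the $J$ matrices of $\delta$, and show they satisfy the same defining equation; uniqueness of $J$ matrices (established after Definition~\ref{def:jmatrices}) then forces equality. Recall that by definition $m_{i\,j}=\det(\gamma_i,\ldots,\gamma_{i+k-2},\delta_j)$, so the $k\times k$ adjacent submatrix $M_{i\,j}$ of $\M$ has columns indexed by $j,j+1,\ldots,j+k-1$, each column being $\det(\gamma_i,\ldots,\gamma_{i+k-2},\delta_\ell)$ as $i$ ranges over $i,\ldots,i+k-1$. The key observation is that if we fix the ``$\gamma$-block'' $G_i:=(\gamma_i,\ldots,\gamma_{i+k-2})$ — a $k\times(k-1)$ matrix — then the $\ell$-th column of $\M$ restricted to rows $i,\ldots,i+k-1$ is obtained by appending $\delta_\ell$ to $G_i$ in every one of the $k$ positions and taking determinants; equivalently, $M_{i\,j}$ is a linear function of the vector $\delta_j$ applied coordinate-wise in a way that does not depend on $j$.

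First I would make this precise: write $C_\ell$ for the $j$-indexed column vector $(\det(\gamma_i,\ldots,\gamma_{i+k-2},\delta_\ell),\ldots,\det(\gamma_{i+k-1},\ldots,\gamma_{i+2k-3},\delta_\ell))^T$, so that $M_{i\,j}=(C_j\mid C_{j+1}\mid\cdots\mid C_{j+k-1})$. The map $v\mapsto(\det(G_i,v),\ldots,\det(G_{i+k-1},v))^T$ is linear in $v$, so $C_\ell$ depends linearly on $\delta_\ell$. Now recall that $\delta\in\P_k$ has transition matrices $\{J_\ell\}$ with $(\delta_\ell,\ldots,\delta_{\ell+k-1})J_\ell=(\delta_{\ell+1},\ldots,\delta_{\ell+k})$, which expands to $\delta_{\ell+k}=(-1)^{k-1}\delta_\ell+j_{\ell\,2}\delta_{\ell+1}+\cdots+j_{\ell\,k}\delta_{\ell+k-1}$. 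Applying the linear map above to both sides gives $C_{j+k}=(-1)^{k-1}C_j+j_{j\,2}C_{j+1}+\cdots+j_{j\,k}C_{j+k-1}$ — exactly the relation among the columns of $M_{i\,j}$ and $M_{i\,j+1}$ — which is precisely the statement that $M_{i\,j}J_j=M_{i\,j+1}$, i.e.\ $H_j=J_j$. Since this holds for every $i$, and since the horizontal transition matrix is characterized (uniquely, by the remark after Definition~\ref{def:jmatrices}, once we know $\M$ is a genuine $SL_k$-tiling so that $M_{i\,j}\in SL_k(\Z)$) by the property $M_{i\,j}H_j=M_{i\,j+1}$, we conclude $H_j=J_j$ for all $j$.

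The main obstacle is a bookkeeping one rather than a conceptual one: making sure the indexing conventions match, i.e.\ that the transition ``from column $j+k-1$ to $j+k$'' of $\M$ as set up in Definition~\ref{def:horizontalandverticaltransitions} lines up with the transition matrix $J_j$ of $\delta$ as defined by $(\delta_j,\ldots,\delta_{j+k-1})J_j=(\delta_{j+1},\ldots,\delta_{j+k})$ in Definition~\ref{def:path}'s surrounding discussion — there is a potential off-by-$k$ or sign discrepancy lurking in how $H_j$ is indexed versus the columns it relates. I would pin this down at the outset by writing out $M_{i\,j}H_j=M_{i\,j+1}$ explicitly and checking the first $k-1$ columns of the right side are the last $k-1$ columns of the left (which forces $H_j$ to have the $J$-matrix shape), then reading off the last column. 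One also needs that $\M$ genuinely lies in $\slk$ to invoke uniqueness of $J$ matrices, but this is exactly Proposition~\ref{phi}, already proved. No hard estimates or new machinery are required; the whole argument is that the determinant-with-a-fixed-$(k-1)$-block map is linear, so it intertwines the $\delta$-recursion with the column-recursion of $\M$.
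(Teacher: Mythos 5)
Your proposal is correct and follows essentially the same route as the paper: both arguments rest on the observation that each entry $m_{i\,\ell}$ is linear in $\delta_\ell$ with coefficients depending only on $\gamma$ and not on $\ell$, so the column recursion of $\M$ is the image of the $\delta$-recursion under a fixed linear map, and invertibility of $M_{i\,j}$ then pins down $H_j=J_j$. The only cosmetic difference is that you invoke multilinearity of the determinant directly, whereas the paper normalizes $(\gamma_1,\ldots,\gamma_k)=I_k$ and extracts the coefficients (there called $a_n$) by an explicit block-determinant computation, treating the first row separately.
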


    \begin{proof}
    Let $(\gamma,\delta)\in (\P_k\times\P_k)/SL_k(\Z)$.
    Without loss of generality, we may select $\gamma$ such that $\left(\gamma_1,\gamma_2,\ldots,\gamma_k \right)=I_k$.
    We want to show that the horizontal transition matrices of $\M$ match up with the $J$ matrices of $\delta$.
    It suffices to show that $M_{1\,1}J_1=M_{1\,2}$ where $J_1$ is the first transition matrix of $\delta$, i.e. $J_1=H_1$.
    In particular, we want to show 
    \begin{equation}\label{eq:72}
    \sum_{i=1}^kj_{1\,i}m_{\ell\,i}=m_{\ell\,k+1}
    \end{equation}
    for all $\ell\in[k]$, where $j_{1\,i}$ is the $i$-th entry in the last column of $J_1$.
    By definition of $\Phi$, we note that
    \begin{equation}\label{eq:71}
    m_{1\,i}=\det(\gamma_1,\dots,\gamma_{k-1},\delta_i)=\det(e_1, \dots, e_{k-1}, \delta_i)=\delta_{i\,k}.
    \end{equation} 
    Then we obtain 
    \[\sum_{i=1}^kj_{1\,i}m_{1\,i}=\sum_{i=1}^kj_{1\,i}\delta_{i\,k}=\delta_{k+1, k}=m_{1\,k+1}\]
    where the first and the last equalities follow from equation \eqref{eq:71}, and the middle equality follows from the definition of the $J$ matrix $J_1$ for the path $\delta$. In particular, this shows equation \eqref{eq:72} in the case $\ell=1$.
    
    For the remaining values of $\ell$, observe that the entry $m_{\ell+1\,i}$ is given by the determinant of the matrix
    \[
   B= \left(\begin{array}{c|c}
    \textbf{0}&\begin{array}{c|c}
    P&\begin{array}{c}
        \delta_{i\,1}\\
        \delta_{i\,2}\\
        \vdots\\
        \delta_{i\,\ell}
    \end{array}
    \end{array}\\
    \hline\\
    I_{k-\ell}&*
    \end{array}
    \right),
    \]
    where the $\ell\times(\ell-1)$ matrix $P$ consists of the upper $\ell$ entries of the matrix $\left(\gamma_{k+1},\gamma_{k+2},\ldots,\gamma_{k+\ell-1} \right)$.
    We may write $\det B$ as
    \begin{equation}\label{eqn:lincombodelta}
    m_{\ell+1\,i}=(-1)^{k-\ell}\sum_{n=1}^{\ell}a_n\delta_{i\,n},
    \end{equation}
    where $a_n$ is some fixed value based on $P$ used in taking the determinant of the upper right block of $B$.
    Note that $P$, and hence $a_n$, does not depend on $i$.
    Thus, we have
    \begin{align*}
        \sum_{i=1}^kj_{1\,i}m_{\ell+1\,i}&=(-1)^{k-\ell}\sum_{i=1}^k \Big(j_{1\,i}\sum_{n=1}^{\ell}a_n\delta_{i\,n}\Big)\\
        &=(-1)^{k-\ell}\Big(\sum_{n=1}^{\ell}a_n\Big)\Big(\sum_{i=1}^kj_{1\,i}\delta_{i\,n}\Big).
    \end{align*}
    By definition of $J_1$, we may rewrite the final sum as $\delta_{k+1\,n}$.
    Thus, we have
    \[
    \sum_{i=1}^kj_{1\,i}m_{\ell+1\,i}=(-1)^{k-\ell}\sum_{n=1}^{\ell}a_n\delta_{k+1\,n}=m_{\ell+1\,k+1},
    \]
    for all $\ell\in[k-1]$ where the last equality follows from equation (\ref{eqn:lincombodelta}).  This shows that equation \eqref{eq:72} holds.
    \end{proof}

    \begin{lemma}\label{lem:verticaljmatrices}
        The vertical transition matrices of the tiling $\M=\Phi(\gamma,\delta)$ are equal to the $J$ matrices of $\widetilde{\gamma}$. 
    \end{lemma}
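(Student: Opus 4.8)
\emph{Proof proposal.} The plan is to reduce to the case of a tiling coming from a Pl\"ucker frieze, where both sides have already been computed, and then match signs. First I would note that the vertical transition matrix $V_i$ of $\M=\Phi(\gamma,\delta)$ is pinned down by a single adjacent $(k+1)\times k$ block of $\M$ (via $M_{i\,j}^{T}V_i=M_{i+1\,j}^{T}$ as in Definition~\ref{def:horizontalandverticaltransitions}), while the $J$ matrix $\widetilde{J_i}$ of $\widetilde\gamma$ depends, by Definition~\ref{def:tildepaths}, only on the $J$ matrices $J_{i-1},\dots,J_{i+k-2}$ of $\gamma$, hence only on finitely many consecutive columns $\gamma_{i-1},\dots,\gamma_{i+2k-2}$. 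Applying Lemma~\ref{lem:friezemapping} to a sufficiently large adjacent block of $\M$ around a prescribed index $i$ (large enough that all of the above data occur inside it), and using that $\phi_{m,n}(\gamma,\delta)$ agrees with $\gamma$ on the chosen consecutive segment so that the corresponding $J$ matrices, and therefore the corresponding $\widetilde{J}$ matrices, agree, it suffices to prove the lemma when $\M=\Phi(\phi_A,\phi_A)=\M_{\F_{(k,n)}(A)}$ for a matrix $A$ all of whose consecutive Pl\"ucker coordinates equal $1$ (here one also invokes Lemma~\ref{lem:frieze}).

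In that case, Lemma~\ref{lem:horizontaljmatrices} (applied with $\delta=\gamma=\phi_A$) identifies the horizontal transition matrices of $\M$ with the $J$ matrices of $\gamma$, and Proposition~\ref{prop:jentries}(a) evaluates them: the final column of $J_p$ has $q=0$ entry $(-1)^{k-1}$ and $(q+1)$-st entry $j_{p\,q+1}=(-1)^{k-q-1}p_{o([p]^{q},[p+q+1]^{k-q})}$ for $1\le q\le k-1$. Feeding this into Definition~\ref{def:tildepaths}, the final column of $\widetilde{J_i}$ has $q=0$ entry $(-1)^{k-1}$ (the top-left entry of any $J$ matrix), and for $1\le q\le k-1$
\[
\widetilde{j_{i\,q+1}}=(-1)^{k}\,j_{i+q-1\,k-q+1}=(-1)^{k}\cdot(-1)^{q-1}p_{o([i+q-1]^{k-q},[i+k]^{q})}=(-1)^{k+q-1}p_{o([i+q-1]^{k-q},[i+k]^{q})},
\]
where the middle equality is Proposition~\ref{prop:jentries}(a) read with first index $p=i+q-1$ and the role of $q$ played by $k-q$.

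Finally I would compare with Proposition~\ref{prop:jentries}(b), which states that the final column of the vertical transition matrix $V_i$ of $\M$ has $q=0$ entry $(-1)^{k-1}$ and $(q+1)$-st entry $(-1)^{k-q-1}p_{o([i+q-1]^{k-q},[i+k]^{q})}$ for $1\le q\le k-1$. Since the exponents $k+q-1$ and $k-q-1$ differ by $2q$, we have $(-1)^{k+q-1}=(-1)^{k-q-1}$, so the final columns of $\widetilde{J_i}$ and $V_i$ coincide; as a $J$ matrix is determined by its final column (Definition~\ref{def:jmatrices}), this gives $\widetilde{J_i}=V_i$ for all $i$, which is the assertion. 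The only step I expect to require real care is the localization in the first paragraph: one must verify that a large enough adjacent block of $\M$ simultaneously determines the $V_i$ and the $J$ matrices of $\gamma$ (hence, through Definition~\ref{def:tildepaths}, of $\widetilde\gamma$) near the prescribed index, so that the identity proved for the frieze tiling furnished by Lemma~\ref{lem:friezemapping} transfers back to $\M$. (A direct argument, expressing $\mathrm{Row}_i$ of $\M$ via the vector $\ast(\gamma_i\wedge\cdots\wedge\gamma_{i+k-2})\in\Z^k$ dual to the $(k-1)$-fold wedge and reading off the row relations, is also possible but seems to lead to a messier sign bookkeeping than the reduction above.)
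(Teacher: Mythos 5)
Your proposal is correct and follows essentially the same route as the paper: reduce to a frieze tiling via Lemma~\ref{lem:friezemapping}, identify its horizontal transition matrices with the $J$ matrices of $\gamma$ using Lemma~\ref{lem:horizontaljmatrices}, evaluate both sides with Proposition~\ref{prop:jentries}(a) and (b) together with Definition~\ref{def:tildepaths}, and check that the signs $(-1)^{k+q-1}=(-1)^{k-q-1}$ agree. Your explicit localization step (that $V_i$ is determined by a $(k+1)\times k$ block and $\widetilde{J_i}$ by finitely many consecutive $J$ matrices of $\gamma$) is the same point the paper makes by ``taking $m,n$ large enough'' and noting the starting position is arbitrary, just spelled out more carefully.
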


    \begin{proof}
By Lemma \ref{lem:friezemapping}, we may construct an $SL_k$-tiling $\M'=\Phi(\phi(\gamma,\delta), \phi(\gamma,\delta))$ from a single path $\phi(\gamma,\delta)$ with contains an adjacent $m\times n$ matrix $\M'_{\{1,\ldots,m\},\{i,\ldots,i+n-1\}}$ equal to $\M_{[m],[n]}$ where $m,n>k$.
		By construction, the path $\phi(\gamma, \delta)$ has as its first $J$ matrix $J_1$ the first $J$ matrix of $\gamma$.
		Since $\M'=\Phi(\phi(\gamma,\delta), \phi(\gamma,\delta))$, then Lemma \ref{lem:horizontaljmatrices} implies that the horizontal transition matrices starting at $\M'_{[k],[k]}=M'_{1\,1}$ are the $J$ matrices of $\phi(\gamma,\delta)$, which are $J$ matrices of $\gamma$.
        Note that the matrix $\M'_{[k],[k]}$ need not lie in $\M$. 
		Since $\M'$ is a tiling from a frieze, by Proposition \ref{prop:jentries} part (b), the vertical transition matrix $V_1'$ has entries given by 
        \[
        j_{1\,q+1}=(-1)^{k-q-1}p_{o([q]^{k-q},[k+1]^q)}(A)
        \] 
        where$A$ is one period of $\phi(\gamma,\delta)$.
		Consider the entries of the matrix $\widetilde{J_1}$ of $\widetilde{\gamma}$.
        By Definition \ref{def:tildepaths} and Proposition \ref{prop:jentries}, these correspond to the entries of the final column of the $J$ matrix given by
        \[
        \widetilde{j_{1\,q+1}}=(-1)^kj_{q\,k-q+1}=(-1)^k (-1)^{k-(k-q+1)}p_{o([q]^{k-q},[k+1]^q)}=(-1)^{k-q-1}p_{o([q]^{k-q}[k+1]^q}),
        \]
        applied to $A$ for $q\in[k-1]$, where the second equality follows from Proposition~\ref{prop:jentries}(a).
        Thus, by taking $m,n$ large enough, the vertical transition matrix $V_1'$ equals the first $J$ matrix $\widetilde{J_1}$ of $\widetilde{\gamma}$.
        Now, we need to show that $V_1$, the vertical transition matrix of $\M$, equals $V_1'$.
        This follows by construction, since $M_{1\,1}=M'_{1\,1}$ and $M_{2\,1}=M'_{2\,1}$.
        Thus, $\widetilde{J_1}=V_1$, as desired.
		Since our choice of starting position for $\M'$ was arbitrary, this holds for all vertical transition matrices and all $J$ matrices of $\gamma$.
    \end{proof}

    We combine the above lemmas to obtain the following crucial result.
    
	\begin{proposition}\label{cor:jmatrices}
		Let $\M=\Phi(\gamma,\delta)$ be an $SL_k$-tiling.
        The horizontal transition matrices of $\M$ equal the $J$ matrices of $\delta$ and the vertical transition matrices of $\M$ equal the $J$ matrices of $\widetilde{\gamma}$.
	\end{proposition}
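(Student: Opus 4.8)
The statement is an immediate synthesis of the two preceding lemmas, so the plan is simply to invoke them and verify that together they account for both families of transition matrices. First I would recall that for an arbitrary $SL_k$-tiling the horizontal transition matrix $H_j$ and the vertical transition matrix $V_i$ of Definition \ref{def:horizontalandverticaltransitions} are \emph{uniquely} determined $J$ matrices: given $M_{i\,j}\in SL_k(\Z)$ whose last $k-1$ columns are prescribed, there is a unique $J$ matrix carrying it to $M_{i\,j+1}$ (respectively, carrying $M_{i\,j}^T$ to $M_{i+1\,j}^T$), as observed after Definition \ref{def:jmatrices}. Hence both ``the horizontal transition matrices of $\M$'' and ``the $J$ matrices of $\delta$'' are unambiguous sequences of $J$ matrices indexed by $\Z$, and it suffices to match them entry by entry; the same applies to the vertical transition matrices and the $J$ matrices of $\widetilde\gamma$.

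For the horizontal direction, Lemma \ref{lem:horizontaljmatrices} states precisely that the horizontal transition matrices of $\M=\Phi(\gamma,\delta)$ coincide with the $J$ matrices of $\delta$, so there is nothing further to do. For the vertical direction, Lemma \ref{lem:verticaljmatrices} states that the vertical transition matrices of $\M$ coincide with the $J$ matrices $\{\widetilde{J_i}\}_{i\in\Z}$ of the path $\widetilde\gamma$ from Definition \ref{def:tildepaths}. Concatenating the two assertions yields the proposition.

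The only point requiring a word of care is index bookkeeping: Lemma \ref{lem:horizontaljmatrices} was established under the normalization $(\gamma_1,\dots,\gamma_k)=I_k$ to align the first transition matrix with the first $J$ matrix of $\delta$, and the freedom to relabel $\gamma$ and $\delta$ (exploited throughout Section \ref{sec:bijection}) then propagates the identification to all indices; Lemma \ref{lem:verticaljmatrices} was likewise proved at an arbitrary starting position. Consequently no new argument is needed here, and I would present the proof as a one-line combination. There is no genuine obstacle at this step: the substance lives in Lemmas \ref{lem:horizontaljmatrices} and \ref{lem:verticaljmatrices}, and behind them in the frieze-reduction of Lemma \ref{lem:friezemapping} together with the Pl\"ucker-determinant computations of Proposition \ref{prop:jentries}; this proposition merely assembles those results into the single statement that will be used going forward.
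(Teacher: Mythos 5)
Your proposal matches the paper's proof exactly: the paper also derives this proposition as an immediate combination of Lemmas \ref{lem:horizontaljmatrices} and \ref{lem:verticaljmatrices}, with no additional argument. Your extra remarks on the uniqueness of $J$ matrices and index bookkeeping are sound but not needed beyond what those lemmas already establish.
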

	
	\begin{proof}
		This result follows from Lemmas \ref{lem:horizontaljmatrices} and \ref{lem:verticaljmatrices}.
	\end{proof}
	
	\subsection{Defining $\Phi^{-1}$}
    In order to demonstrate that $\Phi$ is a bijection, we construct a map $\Psi$ from tilings to paths.
    To begin with, we construct a matrix $C$ which captures a specific transformation of a path $\delta\in\P_k$.
    \begin{lemma}\label{lem:mcb}
		Consider the tiling $\M:=\Phi(\gamma,\delta)$.
        Without loss of generality, we may assume $(\gamma_1,\gamma_2,\ldots,\gamma_k)=I_k$.
        Then for all $j\in\Z$
        \[
        \begin{pmatrix}
            m_{1,j}\\
            m_{2,j}\\
            \vdots\\
            m_{k,j}
        \end{pmatrix}=C\delta_j
        \]
        where $C\in SL_k(\Z)$ has the following form
		\[
		C=
		\left[
		\begin{array}{ccc|c}
		0 & \cdots & 0 & 1\\
		\hline
		(-1)^{k-1}& & \textbf{0} & 0\\
		& \ddots & &\vdots\\
		*& &(-1)^{k-1} & 0
		\end{array}
		\right].
		\]
        Moreover, the entries on the lower-left of the matrix $C$ are determined uniquely by the $J$ matrices of $\gamma$.
	\end{lemma}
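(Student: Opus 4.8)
The plan is to exploit the fact that, with the columns $\gamma_\ell,\ldots,\gamma_{\ell+k-2}$ held fixed, the entry $m_{\ell\,j}=\det(\gamma_\ell,\ldots,\gamma_{\ell+k-2},\delta_j)$ is a \emph{linear} functional of the single vector $\delta_j$. So first I would record that there is a row vector $v_\ell^T\in\Z^{1\times k}$, independent of $j$, with $m_{\ell\,j}=v_\ell^T\delta_j$; explicitly, $v_\ell$ is the vector of signed $(k-1)\times(k-1)$ minors of the $k\times(k-1)$ matrix $(\gamma_\ell,\ldots,\gamma_{\ell+k-2})$ coming from the Laplace expansion of the determinant along its last column. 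Defining $C$ to be the $k\times k$ matrix whose $\ell$-th row is $v_\ell^T$ then immediately yields $(m_{1\,j},\ldots,m_{k\,j})^T=C\delta_j$ for all $j$, and $C$ has integer entries because the $\gamma_i$ do. The remaining work is to identify the entries of $C$ under the normalization $(\gamma_1,\ldots,\gamma_k)=I_k$.

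For that I would pair $v_\ell$ against standard basis vectors, using $v_\ell^Tx=\det(\gamma_\ell,\ldots,\gamma_{\ell+k-2},x)$. Two determinant identities do all the work: (i) $v_\ell^Tx=0$ whenever $x$ equals one of $\gamma_\ell,\ldots,\gamma_{\ell+k-2}$ (repeated column); and (ii) $v_\ell^T\gamma_{\ell-1}=(-1)^{k-1}\det(\gamma_{\ell-1},\gamma_\ell,\ldots,\gamma_{\ell+k-2})=(-1)^{k-1}$, obtained by moving $\gamma_{\ell-1}$ past the other $k-1$ columns and using that a window of $k$ consecutive columns of a path has determinant $1$. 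Under the normalization, $\gamma_i=e_i$ for $1\le i\le k$, so: for $\ell=1$ we get $v_1^Te_i=0$ for $i<k$ and $v_1^Te_k=\det I_k=1$, giving first row $(0,\ldots,0,1)$; for $2\le\ell\le k$ we get $v_{\ell\,\ell-1}=v_\ell^Te_{\ell-1}=(-1)^{k-1}$ (the subdiagonal entry) and $v_{\ell\,m}=v_\ell^Te_m=0$ for every $m$ with $\ell\le m\le k$ (since $e_m=\gamma_m$ is among the columns), which is precisely the zero block above the subdiagonal together with the all-zero last column in the stated shape. The surviving lower-left entries $v_{\ell\,m}$ with $m<\ell-1$ are signed minors of $(\gamma_\ell,\ldots,\gamma_{\ell+k-2})$; for $3\le\ell\le k$ the only non-standard columns entering them are $\gamma_{k+1},\ldots,\gamma_{2k-2}$, which are obtained from $(\gamma_1,\ldots,\gamma_k)=I_k$ via the transition relations $(\gamma_i,\ldots,\gamma_{i+k-1})J_i=(\gamma_{i+1},\ldots,\gamma_{i+k})$, so those entries are determined by $J_1,\ldots,J_{k-2}$, that is, by the $J$ matrices of $\gamma$. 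To finish, $C\in SL_k(\Z)$: expanding $\det C$ along the first row $e_k^T$ leaves the lower-triangular $(k-1)\times(k-1)$ block with $(-1)^{k-1}$ on the diagonal, so $\det C=(-1)^{1+k}\big((-1)^{k-1}\big)^{k-1}=(-1)^{1+k}(-1)^{k-1}=1$.

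I do not expect a genuine obstacle: the one conceptual point is noticing the linearity of $m_{\ell\,j}$ in $\delta_j$, which is what makes $C$ well defined and independent of $\delta$. The only places needing care are the sign bookkeeping in identity (ii) and in the evaluation of $\det C$, and checking that rows $3,\ldots,k$ of $C$ involve only the columns $\gamma_{k+1},\ldots,\gamma_{2k-2}$, so that finitely many $J$ matrices of $\gamma$ suffice to pin them down.
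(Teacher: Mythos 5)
Your proof is correct, and its skeleton matches the paper's: both normalize $(\gamma_1,\ldots,\gamma_k)=I_k$, observe that $m_{\ell\,j}$ is linear in $\delta_j$, and compute the rows of $C$ one at a time. Where the two arguments genuinely differ is in how they pin down the zero pattern and the subdiagonal sign. The paper writes $m_{i\,j}=\det(e_i,\ldots,e_k,\gamma_{k+1},\ldots,\gamma_{k+i-2},\delta_j)$ as a block determinant, extracts the coefficient of $\delta_{i-1\,j}$ as $\left((-1)^{i-1}\right)^{k-i+1}\det(Q)$ where $Q$ is the upper-left block of $\prod_{n=1}^{i-2}J_n=(\gamma_{i-1},\ldots,\gamma_{k+i-2})$, and then computes $\det(Q)=\left((-1)^{i}\right)^{k-i}$ from $\det\bigl(\prod_n J_n\bigr)=1$ before multiplying the signs together. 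You instead evaluate the linear functional $v_\ell^T x=\det(\gamma_\ell,\ldots,\gamma_{\ell+k-2},x)$ directly at the basis vectors $e_m=\gamma_m$: a repeated column gives the zeros in positions $m\geq\ell$ (including the last column), and a single cyclic column permutation together with the path property $\det(\gamma_{\ell-1},\ldots,\gamma_{\ell+k-2})=1$ gives the subdiagonal entry $(-1)^{k-1}$ in one line. This buys you a computation with essentially no block-matrix bookkeeping, and the underlying fact you invoke (a window of $k$ consecutive columns has determinant $1$) is the same fact the paper accesses indirectly through $\det\bigl(\prod_n J_n\bigr)=1$, so nothing is lost. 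You also verify $\det C=1$ explicitly by expanding along the first row, a point the paper asserts in the statement but does not spell out in its proof. Both arguments identify the lower-left entries as minors involving only $\gamma_{k+1},\ldots,\gamma_{2k-2}$, hence determined by $J_1,\ldots,J_{k-2}$, so the final claim is handled identically.
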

	
	\begin{proof}
		We construct the image of $\Phi(\gamma,\delta)$.
        Let $\{J_n\}_{n\in\Z}$ be the transition matrices for $\gamma$.
		Observe that, by construction, since $\gamma_i=e_i$ for all $i\in[k]$, we have
		\[
		m_{1\,j}=\det(e_1,\ldots,e_{k-1},\delta_j)=
		\det\left[
		\begin{array}{c|c}
		I_{k-1} &\begin{array}{c}
		\delta_{1\,j} \\
		\vdots \\
		\delta_{k-1\,j}
		\end{array}  \\
		\hline
		0\cdots 0 & \delta_{k\,j}
		\end{array}
		\right]=\delta_{k\,j}=C\delta_j,
		\]
        for $C$ as in the statement of the lemma.
		For $2\leq i\leq k$, by definition of the map $\Phi$ we have that
		\[
		m_{i\,j}= \det (e_i, e_{i+1}, \dots, e_k, \gamma_{k+1}, \dots, \gamma_{k+i-2}, \delta_j)=\
		\det\left[
		\begin{array}{c|c}
		\textbf{0} & P \\
		\hline
		I_{k-i+1} & *
		\end{array}
		\right]=\left((-1)^{i-1}\right)^{k-i+1}\det(P)
		\]
		where $P$ is the upper right $(i-1)\times (i-2)$ matrix of $\disp\prod_{n=1}^{i-2}J_n$ appended with the upper $(i-1)$ entries of $\delta_j$.
		Note that the determinant of $P$ is a linear combination of the entries of the last column, which is the upper $i-1$ entries of $\delta_j$.
		We focus on the coefficient of $\delta_{i-1\,j}$, which is given by the determinant of the upper left $(i-2)\times (i-2)$ matrix of $P$, call it $Q$.
		
		Note that the product $\disp\prod_{n=1}^{i-2}J_n$ may be written as
		\[
		\prod_{n=1}^{i-2}J_n=(\gamma_{i-1},\ldots,\gamma_{k+i-2})=
		\left[
		\begin{array}{c|c}
		\textbf{0} & Q\\
		\hline
		I_{k-i+2} & *
		\end{array}
		\right].
		\]
		Since each $J_n\in SL_k(\Z)$, the determinant of this product is one.  We may also calculate it in terms of $Q$ as
		\[
		1=\det\left(\prod_{n=1}^{i-2}J_n\right)=\left((-1)^{i-2}\right)^{k-i+2}\det(Q).
		\]
		This gives us that $\det(Q)=\left((-1)^{i}\right)^{k-i}$.

        Thus we have 
        \[
        m_{i\,j}=\det\left(
        \begin{array}{c|c}
            \textbf{0}&\begin{array}{c|c}
                Q&\begin{array}{c}
                    \delta_{1\,j}\\
                    \vdots\\
                    \delta_{i-2\,j}
                \end{array}\\
                \hline\\
                **&\delta_{i-1\,j}
            \end{array}\\
            \hline\\
            I_{k-i+1}&*
        \end{array}
        \right)
        \]
		Hence, the coefficient of $\delta_{i-1\,j}$ in $m_{i\,j}$ is
		\[
		\left((-1)^{i-1}\right)^{k-i+1}\det(Q)=\left((-1)^{i-1}\right)^{k-i+1}\left((-1)^{i}\right)^{k-i}=(-1)^{k-1}.
		\]
        Thus, we have the desired equation
        \[
        \begin{pmatrix}
            m_{1,j}\\
            m_{2,j}\\
            \vdots\\
            m_{k,j}
        \end{pmatrix}=C\delta_j.
        \]
        
        Furthermore, since $Q$ is determined by the $J$ matrices of $\gamma$, then so are the lower triangular entries of $C$.
	\end{proof}

    We use this to construct a map from tilings to pairs of paths modulo $SL_k(\Z)$.
    
	\begin{definition}\label{def:psi}
		Let $\M=(m_{ij})_{i,j\in\Z}$ be a tame $SL_k$-tiling and consider a map $\Psi$ 
        \begin{align*}
            \Psi:\slk&\rightarrow (\P_k\times \P_k)/SL_k(\Z)\\
            \M&\rightarrow(\gamma,\delta),
        \end{align*}
        where the paths $\gamma$ and $\delta$ are defined as follows.
		Let $\delta$ be the horizontal strip from $\M$ that is  $k$ entries tall centered at $\M_{[k],[k]}:=M$, which is to say that
			\[
			\delta_i=\begin{pmatrix}
			m_{1\,i}\\
			m_{2\,i}\\
			\vdots\\
			m_{k\,i}
			\end{pmatrix}.
			\]
		To define $\gamma$ we construct $\gamma_i$ for $i\in[k]$ and its $J$ matrices.
		  In order to construct the $J$ matrices, first let $\gamma' \in \P_k$ be such that $(\gamma')_{[k]}=I_k$ and its $J$ matrices are $V_i$ the vertical transition matrices of $\M$.    Then define $\gamma$ to have $J$ matrices $J_i=\widetilde{V_{i-k+2}}$, that is the shifted $J$ matrices of $\widetilde{\gamma'}$.  Now, it remains to define $\gamma_{[k]}$.
		Consider a tiling $\M'=\Phi(\gamma', \delta)$, and let $\gamma_{[k]}=M'_{11}(\delta_{[k]})^{-1}$, that is $\gamma_{[k]}$ is the matrix $C$ for the tiling $\M'$ from Lemma~\ref{lem:mcb}.
        	\end{definition}
	
	\begin{remark}\label{rem:jpreserving}
		By construction, under the map $\Psi$ the horizontal transition matrix $H_i$ of $\M$ equals the $i$-th $J$ matrix of $\delta$, and the $i$-th $J$ matrix of $\gamma$ is equal to $\widetilde{V_{i-k+2}}$ where $V_i$ is the $i$-th vertical transition matrix of $\M$. 
		    \end{remark}
	
	\begin{lemma}\label{lem:psi}
		The map $\Psi$ is well-defined.
	\end{lemma}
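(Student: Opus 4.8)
The plan is to check that the two ambiguities in the construction of $\Psi$ — the overall labeling/centering of $\M$, and the diagonal $SL_k(\Z)$-action on the target — do not affect the output equivalence class $(\gamma,\delta)\in(\P_k\times\P_k)/SL_k(\Z)$, and that each piece of the construction genuinely produces an element of $\P_k$. First I would verify that $\delta$ and $\gamma$ as defined really are paths. For $\delta$, the columns $\delta_i=(m_{1\,i},\ldots,m_{k\,i})^T$ are $k$ consecutive columns of the submatrix strip, and $(\delta_i,\ldots,\delta_{i+k-1})$ is exactly the adjacent $k\times k$ submatrix $M_{1\,i}$, which lies in $SL_k(\Z)$ by the $SL_k$-property of $\M$; hence $\delta\in\P_k$. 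For $\gamma$, note $\gamma'\in\P_k$ by construction (its initial block is $I_k$ and its transition matrices $V_i$ are $J$ matrices, so consecutive blocks stay in $SL_k(\Z)$ by the remark following Definition~\ref{def:jmatrices}); then $\widetilde{\gamma'}\in\P_k$ since $\widetilde{\cdot}$ is defined to output a path, and re-indexing the $J$ matrices does not change this. Finally $\gamma_{[k]}=M'_{1\,1}(\delta_{[k]})^{-1}$: here $\delta_{[k]}=M'_{1\,1}=M_{1\,1}\in SL_k(\Z)$ (the image $\M'=\Phi(\gamma',\delta)$ agrees with $\M$ on the central $k\times k$ block by Lemma~\ref{lem:mcb} applied with $C=I_k$ when $\gamma'_{[k]}=I_k$), so $\gamma_{[k]}$ is a product of elements of $SL_k(\Z)$ and combining it with the $J$ matrices $J_i=\widetilde{V_{i-k+2}}$ gives $\gamma\in\P_k$.

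Next I would address well-definedness with respect to the choice of indexing (equivalently, the choice of which $k\times k$ block of $\M$ is declared to be $M_{1\,1}=\M_{[k],[k]}$). Re-centering at a different block shifts all row and column indices, hence shifts the sequences of horizontal and vertical transition matrices $\{H_i\}$, $\{V_i\}$, and also shifts the strip defining $\delta$ and the chosen initial block. I would argue that the new pair $(\gamma,\delta)$ is obtained from the old one by applying a common matrix $A\in SL_k(\Z)$ to every column of both — namely the change-of-basis matrix relating the two central blocks — together with an index shift; both operations land in the same class in $(\P_k\times\P_k)/SL_k(\Z)$, provided we check that the construction is compatible with index shifts. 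Shifting $\delta$ by one position and re-centering the strip is straightforward; the subtle point is that shifting the center changes the "initial block" used to define $\gamma_{[k]}$, and I would use Lemma~\ref{lem:mcb} (the lower-triangular entries of $C$ are determined by the $J$ matrices of $\gamma'$) to see that the new $\gamma_{[k]}$ differs from the old one by exactly the expected $SL_k(\Z)$-conjugating factor, so the pair is unchanged modulo the diagonal action.

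The key structural ingredient is Proposition~\ref{cor:jmatrices}: for any tiling produced by $\Phi$, the horizontal transition matrices are the $J$ matrices of the second path and the vertical ones are the $J$ matrices of $\widetilde{\gamma}$. This is what makes the definition of $\Psi$ self-consistent — we are reading off exactly the data that $\Phi$ would use — and in particular it guarantees that applying $\Phi$ to $\Psi(\M)$ recovers the transition data of $\M$. For the present lemma I only need that $\Psi(\M)$ is a bona fide, well-defined element of the quotient, which follows once the two items above are settled; the fact that $\Phi\circ\Psi=\mathrm{id}$ (and hence that $\Psi=\Phi^{-1}$) I expect to be proved separately.

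The main obstacle I anticipate is the bookkeeping around the initial block $\gamma_{[k]}$: unlike $\delta$ and the $J$ matrices of $\gamma$, which are read directly off $\M$ and transform transparently under recentering, the vector block $\gamma_{[k]}$ is defined through the auxiliary tiling $\M'=\Phi(\gamma',\delta)$ and the matrix $C$ of Lemma~\ref{lem:mcb}, so I must make sure that $\M'$ indeed agrees with $\M$ on the relevant $k\times k$ window (so that $C=M'_{1\,1}\delta_{[k]}^{-1}$ makes sense and is invertible) and that under a shift of the center this $C$ transforms by precisely the same $SL_k(\Z)$-factor as $\delta$. Tracking the $J$-matrix re-indexing ($J_i=\widetilde{V_{i-k+2}}$, with the double-tilde shift of Remark~\ref{rem:tildeshift} in the background) carefully through this comparison is the delicate part; everything else reduces to the observation that common left-multiplication by $A\in SL_k(\Z)$ and index shifts are exactly the relations defining the quotient $(\P_k\times\P_k)/SL_k(\Z)$.
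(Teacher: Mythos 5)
Your core argument---checking directly that $\delta\in\P_k$ because $k$ consecutive columns of the strip form an adjacent $k\times k$ submatrix of $\M$, and that $\gamma\in\P_k$ because $\gamma_{[k]}=M'_{1\,1}(\delta_{[k]})^{-1}\in SL_k(\Z)$ while every other block of $k$ consecutive columns is a product of $\gamma_{[k]}$ with $J$ matrices---is exactly the paper's proof; the additional discussion of re-centering is not needed, since $\Psi$ is defined on a tiling with fixed indexing, so there is no residual choice to quotient out. One parenthetical in your write-up is incorrect: $\M'=\Phi(\gamma',\delta)$ does \emph{not} agree with $\M$ on the central $k\times k$ block ``with $C=I_k$''---by Lemma~\ref{lem:mcb} the matrix $C$ is determined by the $J$ matrices of $\gamma'$ and is generally not the identity (indeed $\gamma_{[k]}=C$ is precisely this matrix)---but your conclusion $\gamma_{[k]}\in SL_k(\Z)$ survives, since $M'_{1\,1}$ and $\delta_{[k]}=M_{1\,1}$ both lie in $SL_k(\Z)$.
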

	\begin{proof}
	Let $\Phi(\M)=(\gamma,\delta)$, and we need to show that $\gamma,\delta\in\P_k$.
		Clearly, $\delta\in\P_k$, since each adjacent set of $k$ consecutive columns forms an adjacent $k\times k$ submatrix of $\M$.
        Furthermore, $\gamma\in\P_k$ since $\gamma_{[k]}=M'_{11}(\delta_{[k]})^{-1}$ is in $SL_k(\Z)$ and every other set of $k$ adjacent columns $(\gamma_i,\gamma_{i+1},\ldots,\gamma_{i+k-1})$ of $\gamma$ form a matrix which can be written as a product of $\gamma_{[k]}$ and $J$ matrices, which are all elements of $SL_k(\Z)$.
	\end{proof}

	We proceed to show that $\Phi$ and $\Psi$ are inverses of each other.
	
    \begin{proposition}\label{prop:phicircpsi}
		$\Phi\circ\Psi=\mathrm{id}_{\slk}.$
	\end{proposition}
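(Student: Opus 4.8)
The plan is to show the tiling $\M'':=\Phi(\Psi(\M))$ equals $\M$ by comparing linearization data, using the bijectivity of the map $\xi$ from Proposition~\ref{prop:linearizationdata}. Write $\Psi(\M)=(\gamma,\delta)$ for the representative constructed in Definition~\ref{def:psi}, so that $\M''=\widetilde{\Phi}(\gamma,\delta)$; by Proposition~\ref{phi} this is a tame $SL_k$-tiling. Since $\xi$ sends a tame tiling to its central block together with its linearization data, and the latter is nothing but the collection of last columns of the horizontal and vertical transition matrices, it suffices to check that (i) $\M''$ and $\M$ have the same horizontal transition matrices and the same vertical transition matrices, and (ii) $M''_{1\,1}=M_{1\,1}$.

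For (i), recall that $\delta$ is the height-$k$ horizontal strip of $\M$ centered at $M_{1\,1}=\M_{[k],[k]}$, so that $(\delta_j,\delta_{j+1},\dots,\delta_{j+k-1})=M_{1\,j}$ for all $j$; hence the $J$ matrix carrying one such window to the next is exactly the horizontal transition matrix $H_j$, i.e.\ the $J$ matrices of $\delta$ are the horizontal transition matrices of $\M$ (cf.\ Remark~\ref{rem:jpreserving}). By Proposition~\ref{cor:jmatrices} the horizontal transition matrices of $\M''=\Phi(\gamma,\delta)$ are precisely the $J$ matrices of $\delta$, so they coincide with those of $\M$. For the vertical direction, Proposition~\ref{cor:jmatrices} gives that the vertical transition matrices of $\M''$ are the $J$ matrices of $\widetilde{\gamma}$. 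By construction $\gamma$ has $J$ matrices $\widetilde{V_{i-k+2}}$, so those of $\widetilde{\gamma}$ are obtained by one further diagonalization; by Remark~\ref{rem:tildeshift} the composite of two diagonalizations is the identity up to a shift of $k-2$ on the index, and the offset $-k+2$ built into Definition~\ref{def:psi} is exactly what cancels this shift, so the $J$ matrices of $\widetilde{\gamma}$ are the vertical transition matrices $V_i$ of $\M$ with matching indices. This settles (i).

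For (ii), use the $SL_k(\Z)$-invariance of $\widetilde{\Phi}$ from Lemma~\ref{lem:invariance} to replace $(\gamma,\delta)$ by $\bigl(\gamma_{[k]}^{-1}\gamma,\ \gamma_{[k]}^{-1}\delta\bigr)$, whose first entry has $[k]$-block $I_k$ and the same $J$ matrices as $\gamma$. Lemma~\ref{lem:mcb} then applies and shows that the $j$-th column of the central block of $\M''$ equals $C\cdot\gamma_{[k]}^{-1}\delta_j$, where $C$ is the matrix of Lemma~\ref{lem:mcb} determined by the $J$ matrices of $\gamma$. But Definition~\ref{def:psi} chose $\gamma_{[k]}$ to be exactly this matrix $C$ (this is the content of setting $\gamma_{[k]}=M'_{1\,1}(\delta_{[k]})^{-1}$ via the auxiliary tiling, combined with the identification in Lemma~\ref{lem:mcb} of $C$ with the data of the $J$ matrices). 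Hence $C\gamma_{[k]}^{-1}=I_k$, the central block of $\M''$ is $\delta_{[k]}$, and $\delta_{[k]}=\M_{[k],[k]}=M_{1\,1}$ by definition of $\delta$. Combining (i) and (ii) with Proposition~\ref{prop:linearizationdata} forces $\M''=\M$, that is $\Phi\circ\Psi=\mathrm{id}_{\slk}$.

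The main obstacle I anticipate is part (i): keeping careful track of the diagonalization operation $\widetilde{\,\cdot\,}$ and the index shifts so that $\widetilde{\gamma}$ reproduces the $V_i$ exactly rather than merely up to a translation — this is the whole reason for the $-k+2$ offset in Definition~\ref{def:psi}. A secondary delicate point is, in (ii), making sure the matrix $C$ emerging after factoring $\gamma_{[k]}$ out of the determinant is genuinely the matrix by which $\gamma_{[k]}$ was defined, so that the product $C\gamma_{[k]}^{-1}$ collapses to the identity. Once these two identifications are pinned down, the remainder is just the bijectivity of $\xi$.
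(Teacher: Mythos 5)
Your proposal is correct and follows essentially the same route as the paper's own proof: both arguments reduce the claim to (i) matching the horizontal and vertical transition matrices of $\Phi(\Psi(\M))$ with those of $\M$ via Proposition~\ref{cor:jmatrices}, Remark~\ref{rem:jpreserving} and the double-tilde shift of Remark~\ref{rem:tildeshift}, and then (ii) pinning down the central $k\times k$ block by normalizing with Lemma~\ref{lem:invariance} and invoking Lemma~\ref{lem:mcb}. The one identification you yourself flag as delicate in step (ii) --- that the matrix $C$ emerging from Lemma~\ref{lem:mcb} applied to $\Phi(\gamma,\delta)$ (determined by the $J$ matrices of $\gamma$, i.e.\ the $\widetilde{V_{i-k+2}}$) coincides with the matrix $\gamma_{[k]}$ chosen in Definition~\ref{def:psi} (which is the $C$ matrix of the auxiliary tiling $\Phi(\gamma',\delta)$, determined by the $V_i$) --- is passed over with exactly the same brevity in the paper, which simply writes $\gamma=C\gamma'$ and asserts the two $C$ matrices agree because $C$ ``only depends on the transition matrices of the first path''; so your write-up is faithful to the published argument, including at its one genuinely terse point.
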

	
	\begin{proof}
        Let $\M\in\slk$.
		Let $V_i$ and $H_i$ be the transition matrices of $\M$.
       Consider $\Psi(\M)=(\gamma,\delta)$.
        By Remark~\ref{rem:jpreserving}, $H_i$ is equal to the $i$-th $J$ matrix of $\delta$, while the $i$-th $J$ matrix for $\gamma$ is given by $\widetilde{V_{i-k+2}}$.
        Thus, by Proposition~\ref{cor:jmatrices}, $\Phi(\gamma,\delta)$ has as its $i$-th horizontal transition matrix $H_i$ and as its $i$-th vertical transition matrix 
        \[
        \widetilde{\widetilde{V_{i-k+2}}}=V_i
        \]
		by Remark~\ref{rem:tildeshift}.  
		
		It therefore suffices to check that $M:=M_{1\,1}$ is equal to
		$\left((\Phi\circ\Psi)(\M)\right)_{[k],[k]}$.
		Given $\Psi(\M)=(\gamma,\delta)$, and let $\gamma'$ be the path obtained from the vertical transition matrices of $\M$ as in Definition~\ref{def:psi}.  Note that by construction $(\gamma')_{[k]}=I_k$ and $\gamma=C\gamma'$ where $C$ is a matrix such that $\Phi(\gamma', \delta)_{[k],[k]}=C\delta_{[k]}$.  Moreover, by construction of $\Psi$, $\delta$ has the property that $M=\delta_{[k]}$.  Then we have 
		\[\Phi(\gamma, \delta)=\Phi(C\gamma', \delta)=\Phi(\gamma', C^{-1}\delta).\]
By Lemma~\ref{lem:mcb}, the $C$ matrix for $\Phi(\gamma, \delta)$ is the same as for $\Phi(\gamma', \delta)$, since it only depends on the transition matrices of the first path.  Thus, by Lemma~\ref{lem:mcb} applied to $\Phi(\gamma', C^{-1}\delta)$, we obtain 
\[\left((\Phi\circ\Psi)(\M)\right)_{[k],[k]}=\Phi(\gamma', C^{-1}\delta)_{[k],[k]}=C(C^{-1}\delta_{[k]})=\delta_{[k]}=M.\]
This shows the desired conclusion. 
        	\end{proof}
	
	\begin{proposition}\label{prop:psicircphi}
		$\Psi\circ\Phi=\mathrm{id}_{(\P_k\times\P_k)/SL_k(\Z)}$.
	\end{proposition}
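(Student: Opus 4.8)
The plan is to leverage Proposition~\ref{prop:phicircpsi}: since $\Phi\circ\Psi=\mathrm{id}_{\slk}$, it is enough to prove that $\Phi$ is injective, for then $\Phi$ is a bijection (it is already surjective, being a retraction of $\Psi$), its two‑sided inverse is unique and must coincide with the right inverse $\Psi$, and $\Psi\circ\Phi=\mathrm{id}$ follows formally. So I would start from an equality $\Phi(\gamma,\delta)=\Phi(\gamma^*,\delta^*)=:\M$ and aim to show $(\gamma,\delta)$ and $(\gamma^*,\delta^*)$ represent the same class. By Lemma~\ref{lem:invariance} I may replace each representative by an $SL_k(\Z)$‑translate, and since $\gamma_{[k]}$ and $\gamma^*_{[k]}$ already lie in $SL_k(\Z)$, I can normalize so that $\gamma_{[k]}=\gamma^*_{[k]}=I_k$; it then suffices to prove that the two normalized pairs are literally equal.

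With this normalization in place I would first match up the transition data. Applying Proposition~\ref{cor:jmatrices} to the two presentations of $\M$ shows that $\delta$ and $\delta^*$ have the same $J$ matrices (the horizontal transition matrices of $\M$) and that $\widetilde{\gamma}$ and $\widetilde{\gamma^*}$ have the same $J$ matrices (the vertical transition matrices of $\M$). Since $\gamma_{[k]}=\gamma^*_{[k]}=I_k$, Definition~\ref{def:tildepaths} gives $\widetilde{\gamma}_{[k]}=\widetilde{\gamma^*}_{[k]}=I_k$ as well, and a path is determined by its first $k$-window together with its sequence of $J$ matrices (unwind the recursion $(\alpha_{i+1},\dots,\alpha_{i+k})=(\alpha_i,\dots,\alpha_{i+k-1})J_i$ in both directions). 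Hence $\widetilde{\gamma}=\widetilde{\gamma^*}$. Applying the operation $\alpha\mapsto\widetilde{\alpha}$ once more and invoking Remark~\ref{rem:tildeshift} (which gives $\widetilde{\widetilde{J_i}}=J_{i+k-2}$) yields that $\gamma$ and $\gamma^*$ have the same $J$ matrices, and, having the same first window, they must be equal: $\gamma=\gamma^*$.

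It remains to recover the second path, and for this I would use Lemma~\ref{lem:mcb}: applied to each presentation it gives $(m_{1,j},\dots,m_{k,j})^{T}=C\delta_j$ and $(m_{1,j},\dots,m_{k,j})^{T}=C^*\delta^*_j$, where $C,C^*\in SL_k(\Z)$ depend only on the $J$ matrices of $\gamma$ and of $\gamma^*$ respectively. Since $\gamma=\gamma^*$ these $J$ matrices agree, so $C=C^*$, and inverting gives $\delta_j=\delta^*_j$ for every $j$, that is $\delta=\delta^*$. Thus the normalized pairs coincide, $\Phi$ is injective, and the proposition follows. The only step that needs genuine care is the index‑shift bookkeeping around Remark~\ref{rem:tildeshift} when passing back from $\widetilde{\gamma}=\widetilde{\gamma^*}$ to equality of the $J$ matrices of $\gamma$ and $\gamma^*$ (and checking that the normalization $\gamma_{[k]}=I_k$ is preserved under $\alpha\mapsto\widetilde{\alpha}$); everything else is formal once Proposition~\ref{cor:jmatrices} and Lemma~\ref{lem:mcb} are in hand. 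A more computational alternative would be to compute $\Psi(\Phi(\gamma,\delta))$ directly, using Remark~\ref{rem:jpreserving} to identify all of its transition matrices with those built from $(\gamma,\delta)$ and Lemma~\ref{lem:mcb} to identify the central block; I prefer the injectivity argument because it reuses the already established identity $\Phi\circ\Psi=\mathrm{id}$ and does not require re‑deriving the central block.
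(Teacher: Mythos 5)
Your proposal is correct and takes essentially the same approach as the paper: both arguments reduce to showing that two pairs of paths with the same image under $\Phi$ must coincide after $SL_k(\Z)$-normalization, using Proposition~\ref{cor:jmatrices} to match the $J$ matrices of the paths with the transition matrices of the tiling and Lemma~\ref{lem:mcb} to recover $\delta$ from the matrix $C$ determined by the $J$ matrices of $\gamma$. The only (cosmetic) difference is that you package this as injectivity of $\Phi$ combined with Proposition~\ref{prop:phicircpsi}, whereas the paper directly compares $(\gamma,\delta)$ with $\Psi(\Phi(\gamma,\delta))$ and uses Remark~\ref{rem:jpreserving} in place of a second application of Proposition~\ref{cor:jmatrices}.
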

	
	\begin{proof}
		Consider a pair of paths $\gamma$ and $\delta$ in $\P_k$.
		Let $(\gamma',\delta')=(\Psi\circ\Phi)(\gamma,\delta)$.
		By Proposition \ref{cor:jmatrices} and Remark \ref{rem:jpreserving}, we know that the $J$ matrices are the same for $\gamma$ and $\gamma'$ as well as $\delta$ and $\delta'$.
		After multiplying both $\gamma'$ and $\delta'$ by some matrix $A\in SL_k(\Z)$, which does not change the equivalence class of $(\gamma,\delta)$ in $(\P_k\times\P_k)/SL_k(\Z)$, we may assume that $\gamma=\gamma'$.
		Furthermore, we may set $(\gamma_1,\gamma_2,\ldots,\gamma_k)=I_k$.
		Since the $J$ matrices of $\delta$ and $\delta'$ are the equal, it suffices to show that
		\[
		(\delta_1,\delta_2,\ldots,\delta_k)=(\delta_1',\delta_2',\ldots,\delta_k').
		\]
		Since $(\gamma,\delta')=(\Psi\circ\Phi)(\gamma,\delta)$, by Proposition \ref{prop:phicircpsi}, we may apply $\Phi$ to both sides to get
        \[
        \Phi(\gamma,\delta)=\Phi(\gamma,\delta'):=\M.
        \]
        Let $C,C'$ be the matrices as in Lemma~\ref{lem:mcb} for $\Phi(\gamma,\delta), \Phi(\gamma,\delta')$ respectively. By Lemma \ref{lem:mcb} applies to $\M$, this implies that $C\delta_j=C'\delta_j'$ for all $j\in\Z$.
        Since $C, C'$ are uniquely determined by the $J$ matrices of $\gamma$, we have $C=C'$ and, since $C\in SL_k(\Z)$, this gives $\delta_j=\delta'_j$ for all $j\in\Z$.
			\end{proof}
	
	With both directions proven, we get our main result.
	
	\begin{theorem}\label{bijection}
		The map $\Phi$ given by
		\begin{align*}
		\Phi:(\P_k\times\P_k)/SL_k(\Z)&\rightarrow\slk\\
		(\gamma,\delta)&\mapsto \M=(m_{i\,j})_{i,j\in\Z},
		\end{align*} 
		where $m_{i\,j}=\det(\gamma_i,\ldots,\gamma_{i+k-2},\delta_j)$ is a bijection between tame $SL_k$-tilings and pairs of paths modulo the diagonal action by $SL_k(\Z)$.
	\end{theorem}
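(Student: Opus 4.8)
The plan is to prove surjectivity and injectivity simultaneously by exhibiting an explicit two-sided inverse $\Psi$ to $\Phi$. Well-definedness of $\Phi$ is already in hand: by Lemma~\ref{lem:invariance} the map $\widetilde\Phi$ descends to the quotient by the diagonal $SL_k(\Z)$-action, and Proposition~\ref{phi} shows its image consists of tame $SL_k$-tilings. So the real work is to construct $\Psi\colon\slk\to(\P_k\times\P_k)/SL_k(\Z)$ and verify $\Phi\circ\Psi=\mathrm{id}$ and $\Psi\circ\Phi=\mathrm{id}$.

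For the construction of $\Psi$ I would read off $\delta$ directly from $\M$ as the height-$k$ horizontal strip of columns through the distinguished window $\M_{[k],[k]}$; this is automatically a path since every run of $k$ consecutive such columns is an adjacent $k\times k$ submatrix of $\M$, hence in $SL_k(\Z)$. The path $\gamma$ cannot be read off so naively: the height-$k$ vertical strip of $\M$ turns out to encode $\widetilde\gamma$ rather than $\gamma$ (Lemma~\ref{lem:verticaljmatrices}), so I would instead define $\gamma$ by its $J$-matrices — the suitably shifted tilde-d vertical transition matrices of $\M$ — together with the base window $\gamma_{[k]}$, which Lemma~\ref{lem:mcb} pins down uniquely as the matrix $C$ determined by those transition matrices. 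Well-definedness of $\Psi$ is then immediate: $\delta$ is built from $k\times k$ minors of $\M$, and each $k$-window of $\gamma$ is a product of $\gamma_{[k]}\in SL_k(\Z)$ with $J$-matrices.

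The engine of the argument is the transition-matrix dictionary of Proposition~\ref{cor:jmatrices}: for $\M=\Phi(\gamma,\delta)$ the horizontal transition matrices of $\M$ are exactly the $J$-matrices of $\delta$, and the vertical ones are the $J$-matrices of $\widetilde\gamma$. Granting this, both composites follow by bookkeeping. For $\Phi\circ\Psi=\mathrm{id}$: $\Phi(\Psi(\M))$ has the same transition matrices as $\M$ — in the vertical direction this uses $\widetilde{\widetilde{V_i}}=V_{i+k-2}$ from Remark~\ref{rem:tildeshift} — so the two tilings agree away from the central block, and that block is forced to agree by the choice of $\gamma_{[k]}$ via Lemma~\ref{lem:mcb}. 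For $\Psi\circ\Phi=\mathrm{id}$ on the quotient: $(\gamma',\delta')=(\Psi\circ\Phi)(\gamma,\delta)$ has the same $J$-matrices as $(\gamma,\delta)$, so after acting by a single $A\in SL_k(\Z)$ I may normalize $\gamma=\gamma'$ and $\gamma_{[k]}=I_k$; applying $\Phi$ and Lemma~\ref{lem:mcb} then gives $C\delta_j=C'\delta_j'$ with $C=C'$ determined by the $J$-matrices of $\gamma$, whence $\delta=\delta'$. Thus the theorem is precisely the conjunction of Propositions~\ref{prop:phicircpsi} and~\ref{prop:psicircphi}.

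The main obstacle is not this final bookkeeping but the two ingredients it rests on. First, that $\Phi$ lands in tame $SL_k$-tilings at all: the key idea is to localize, matching any finite window of $\Phi(\gamma,\delta)$ with a window of a tiling coming from an honest Pl\"ucker frieze $\F_{(k,n)}(A)$ via the $\phi_{m,n}(\gamma,\delta)$ construction of Definition~\ref{def:friezemap} — which itself needs that $J$-matrices generate $SL_k(\Z)$ — and then invoking Theorem~\ref{thm:pluckerfrieze} to get tameness. Second, the vertical half of the dictionary: relating the vertical transition matrices of $\Phi(\gamma,\delta)$ to the $J$-matrices of $\widetilde\gamma$ requires the explicit formula for $J$-matrix entries in a Pl\"ucker frieze (Proposition~\ref{prop:jentries}), which in turn rests on the Pl\"ucker determinant identity (Proposition~\ref{prop:pluckerdeterminant}) and the minor computation of Lemma~\ref{lem:jmatrixminor}. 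Once those structural results are available, as they are in the preceding subsections, the bijection statement is formal.
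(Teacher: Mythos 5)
Your proposal is correct and follows essentially the same route as the paper: it constructs the same inverse $\Psi$ (reading $\delta$ off the horizontal strip, recovering $\gamma$ from the tilde-shifted vertical transition matrices and the base window $C$ of Lemma~\ref{lem:mcb}), and establishes both composites exactly as in Propositions~\ref{prop:phicircpsi} and~\ref{prop:psicircphi}, resting on the same transition-matrix dictionary of Proposition~\ref{cor:jmatrices} and the Pl\"ucker-frieze localization for well-definedness. No gaps; this matches the paper's argument.
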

	\begin{proof}
		Follows from Proposition \ref{prop:phicircpsi} and Proposition \ref{prop:psicircphi}.
	\end{proof}

Next, we relate the bijection $\Phi$ to the correspondences appearing in \cite{s22} and \cite{br10}.
First, by restricting to the case $k=2$, we derive a result which is equivalent to the one shown in \cite{s22}.
In particular, it is easy to see that the maps $\Phi$ and $\Phi\Q$, see Definition~\ref{def:shortmap} and Theorem~\ref{thm:short}, are related as in the following commutative diagram
        \[
        \begin{tikzcd}
            (\P_2\times\P_2)/SL_2(\Z) \ar[r, "\Phi"] \ar[d,"\chi"] & \mathbb{SL}_2 \ar[d,"\chi'"]\\
            (\P\Q\times\P\Q)/SL_2(\Z) \ar[r,"\Phi\Q"] & \mathbb{SL}_2/\pm
        \end{tikzcd},
        \]
        where $\chi$ is a quotient map which identifies $(\gamma,\delta)$ and $(-\gamma,\delta)$.  Indeed, this follows because in $\P\Q$, the two paths of reduced rationals corresponding $\gamma$ and $-\gamma$ are equal.

	For general $k$,  the map $\Phi$ gives a nice description of the map $\xi^{-1}$ from Proposition \ref{prop:linearizationdata}.
	Let us first define another map $\rho$ which associates a path with its linearization data \cite[Equation 7]{br10}.
	
	\begin{definition}\label{def:rho}
		Define the function $\rho$ as follows.
		\begin{align*}
		\rho:(\P_k\times\P_k)/SL_k(\Z)&\rightarrow SL_k(\Z)\times \left(\Z^{1\times (k-1)}\right)^\Z\times\left(\Z^{1\times (k-1)} \right)^\Z\\
		(\gamma,\delta)&\mapsto M \times \{\left(j_{i\,2}, \ldots,j_{i\,k} \right)\}_{i\in\Z}\times \{\left(j'_{i\,2}, \ldots,j'_{i\,k} \right)\}_{i\in\Z}
		\end{align*}
		where the elements $j_{i\,q}$ and $j'_{i\,q}$ for $q\in[2, \dots, k]$ are given by the final columns of the $J$ matrices of $\widetilde{\gamma}$ and $\delta$ respectively, and $M= \Phi(\gamma, \delta)_{[k],[k]}$.
	\end{definition}

	Since linearization data is in bijection with $SL_k$-tilings by Proposition \ref{prop:linearizationdata}, we see that $\rho$ is also a bijection, hence it is invertible.
	While $\xi^{-1}$ is defined recursively, the map $\Phi$ is explicit.
	This makes the following result particularly noteworthy.
    
	\begin{corollary}
		The following diagram commutes.
		\[
				\begin{tikzcd}
					(\P_k\times\P_k)/SL_k(\Z) \ar[r,"\Phi"] \ar[d, "\rho"] & \slk\\
					SL_k(\Z)\times \left(\Z^{1\times (k-1)}\right)^\Z\times\left(\Z^{1\times (k-1)} \right)^\Z \ar[ur,"\xi^{-1}"]
				\end{tikzcd}
		\]		
	\end{corollary}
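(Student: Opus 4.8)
The plan is to show that $\xi^{-1} \circ \rho = \Phi$ by comparing the two maps on the data that determines an $SL_k$-tiling, namely the central matrix $M_{[k],[k]}$ together with the linearization data of rows and columns. By Proposition~\ref{prop:linearizationdata}, the map $\xi$ sending a tame $SL_k$-tiling to this triple is a bijection, so it suffices to check that $\xi(\Phi(\gamma,\delta)) = \rho(\gamma,\delta)$ for every pair of paths $(\gamma,\delta)$. Unwinding the definition of $\xi$ from Proposition~\ref{prop:linearizationdata}, this amounts to three separate equalities: the central $k \times k$ matrix of $\Phi(\gamma,\delta)$ agrees with the first component of $\rho(\gamma,\delta)$, the row linearization data of $\Phi(\gamma,\delta)$ agrees with the second component, and the column linearization data agrees with the third.

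First I would observe that the central matrix condition is immediate: both $\xi(\Phi(\gamma,\delta))$ and $\rho(\gamma,\delta)$ record $\Phi(\gamma,\delta)_{[k],[k]}$ as their first component essentially by definition, the former because $\xi$ extracts $M_{1\,1} = \M_{[k],[k]}$, the latter because $\rho$ is defined with first component $M = \Phi(\gamma,\delta)_{[k],[k]}$. Next, for the linearization data, I would invoke the discussion following Proposition~\ref{prop:linearizationdata}, which explains that the linearization data of a tiling $\M$ is precisely the collection of final columns of the $J$ matrices governing the transitions between consecutive rows and consecutive columns of $\M$ — that is, the final columns of the vertical and horizontal transition matrices $V_i$ and $H_i$ in the sense of Definition~\ref{def:horizontalandverticaltransitions}. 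Then Proposition~\ref{cor:jmatrices} identifies these: the horizontal transition matrices of $\M = \Phi(\gamma,\delta)$ are exactly the $J$ matrices of $\delta$, and the vertical transition matrices are exactly the $J$ matrices of $\widetilde{\gamma}$. Comparing with Definition~\ref{def:rho}, whose second and third components are the final columns of the $J$ matrices of $\widetilde{\gamma}$ and of $\delta$ respectively, we see the data matches on the nose.

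Putting these pieces together gives $\xi(\Phi(\gamma,\delta)) = \rho(\gamma,\delta)$, and applying the bijection $\xi^{-1}$ yields $\Phi(\gamma,\delta) = \xi^{-1}(\rho(\gamma,\delta))$, which is exactly the commutativity of the diagram. I do not expect a genuine obstacle here: all the substantive work has already been carried out in establishing Proposition~\ref{cor:jmatrices} (via Lemmas~\ref{lem:horizontaljmatrices} and~\ref{lem:verticaljmatrices}) and in the background material on linearization data from \cite{br10}. The only point requiring a little care is bookkeeping the correspondence between ``linearization data'' as defined in \cite{br10} — coefficients $j_{i\,2},\ldots,j_{i\,k}$ expressing $\mathrm{Row}_{i+k}$ and $\mathrm{Col}_{i+k}$ in terms of the previous $k$ rows and columns — and the final columns of the transition $J$ matrices of Definition~\ref{def:horizontalandverticaltransitions}, together with keeping straight which path ($\widetilde{\gamma}$ versus $\delta$) controls rows versus columns; this is exactly the content already recorded in Remark~\ref{rem:jpreserving}, so the corollary follows immediately.
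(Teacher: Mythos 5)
Your proposal is correct and takes essentially the same route the paper intends: the paper treats the corollary as an immediate consequence of Proposition~\ref{prop:linearizationdata}, Proposition~\ref{cor:jmatrices}, and the definition of $\rho$, which is exactly the reduction to $\xi\circ\Phi=\rho$ that you carry out. Your write-up just makes explicit the componentwise check (central matrix, row data via vertical transitions and $\widetilde{\gamma}$, column data via horizontal transitions and $\delta$) that the paper leaves to the reader.
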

	
	\section{Applications}\label{sec:applications}

	In this section we discuss a few consequences of the bijection $\Phi$.  First, by placing certain restrictions on the paths we classify periodic $SL_k$-tilings as well as those coming from (infinite) friezes.  Then, we relate the construction of a dual of a tiling of Definition~\ref{def:dual} with the map $\Phi$, and show that it corresponds to applying the tilde operator on the two paths.  This gives a new interpretation of the notion of duality.

	\subsection{Periodicity}
	Here we show that periodic tilings correspond to periodic paths. 
	Recall the notion of periodicity given in Definition \ref{def:periodic}, which we now relate to periodicity of $J$ matrices. 
	\begin{lemma}\label{periodicj}
		For a path $\gamma$, if the $J$ matrices are $m$-periodic and $J_1J_2\cdots J_m=I_k$, then $\gamma$ is $m$-periodic.
	\end{lemma}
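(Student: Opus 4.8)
The plan is to leverage the defining property of paths, namely that each matrix $(\gamma_i,\ldots,\gamma_{i+k-1})$ lies in $SL_k(\Z)$, together with the fact that consecutive such matrices are related by the transition matrices $J_i$, i.e. $(\gamma_i,\ldots,\gamma_{i+k-1})J_i=(\gamma_{i+1},\ldots,\gamma_{i+k})$. First I would fix an arbitrary index $i\in\Z$ and form the matrix $G_i:=(\gamma_i,\gamma_{i+1},\ldots,\gamma_{i+k-1})\in SL_k(\Z)$. Iterating the transition relation $m$ times gives $G_i J_i J_{i+1}\cdots J_{i+m-1}=G_{i+m}$. By the hypothesis that the $J$ matrices are $m$-periodic, the product $J_i J_{i+1}\cdots J_{i+m-1}$ is a cyclic rotation of the product $J_1J_2\cdots J_m$; the key point will be to argue that any such cyclic rotation is also the identity.

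For that step, I would observe that $J_1\cdots J_m=I_k$ implies, for any $1\le t\le m$, that $J_t J_{t+1}\cdots J_{t+m-1}=(J_1\cdots J_{t-1})^{-1}(J_1\cdots J_m)(J_1\cdots J_{t-1})=(J_1\cdots J_{t-1})^{-1} I_k (J_1\cdots J_{t-1})=I_k$, using $m$-periodicity to identify $J_{m+1},\ldots,J_{m+t-1}$ with $J_1,\ldots,J_{t-1}$. Hence $G_{i+m}=G_i$ for every $i$. Comparing the first columns of $G_i$ and $G_{i+m}$ yields $\gamma_{i+m}=\gamma_i$, and since $i$ was arbitrary this is exactly the statement that $\gamma$ is $m$-periodic.

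I expect the only real subtlety to be the bookkeeping in the previous paragraph: one must be careful that periodicity of the sequence $\{J_i\}_{i\in\Z}$ is genuinely bi-infinite (so that the indices $J_{m+1},\ldots,J_{m+t-1}$ really do coincide with $J_1,\ldots,J_{t-1}$) and that the telescoping/conjugation manipulation is valid in the group $SL_k(\Z)$, which it is since all $J_t\in SL_k(\Z)$ are invertible. Everything else is a direct unwinding of Definition~\ref{def:jmatrices} and the transition relation, so no genuine obstacle arises beyond this indexing argument.
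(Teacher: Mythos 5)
Your proposal is correct and follows essentially the same route as the paper: both reduce the claim to showing that every cyclic rotation $J_i\cdots J_{i+m-1}$ of the identity product $J_1\cdots J_m=I_k$ is again the identity, the paper via an insert-and-regroup manipulation and you via the equivalent conjugation identity. No gaps.
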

	\begin{proof}	
		By definition
		\[
		\left(\gamma_i,\ldots,\gamma_{i+k-1} \right)J_iJ_{i+1}\cdots J_{i+m-1}=\left(\gamma_{i+m},\ldots,\gamma_{i+m+k-1} \right).
		\]
		Thus, it suffices to show $J_i\cdots J_{i+m-1}=I_k$.
		Since $J_i=J_{i+m}$ for all $i\in\Z$, we may write the subscripts $\mathrm{mod}\,m$.
		Thus,
		\[
		J_iJ_{i+1}\cdots J_{i+m-1}=J_{i}J_{i+1}\cdots J_mJ_1J_2\cdots J_{i-1}
		\]
		Note that $J_1J_2\cdots J_m=I_k$ by assumption.
		Therefore, we multiply
		\begin{align*}
		J_{i}J_{i+1}\cdots J_mJ_1J_2\cdots J_{i-1}&=J_{i}J_{i+1}\cdots J_mJ_1J_2\cdots J_{i-1}\left(J_iJ_{i+1}\cdots J_mJ_m^{-1}J_{m-1}^{-1}\cdots J_i^{-1} \right)\\
		&=J_{i}J_{i+1}\cdots J_m\left(J_1J_2\cdots J_{i-1}J_iJ_{i+1}\cdots J_m\right)J_m^{-1}J_{m-1}^{-1}\cdots J_i^{-1}\\
		&=J_{i}J_{i+1}\cdots J_mJ_m^{-1}J_{m-1}^{-1}\cdots J_i^{-1}\\
		&=I_k.
		\end{align*}
	\end{proof}
	
	We may construct a similar result going from tilings to $J$ matrices.
	
	\begin{lemma}\label{periodicm}
		If $\M=\Phi(\gamma, \delta)$ is $m$-column periodic, then the $J$ matrices of $\delta$ are $m$-periodic and satisfy $J_iJ_{i+1}\cdots J_{i+m-1}=I_k$ for all $i\in\Z$.
	\end{lemma}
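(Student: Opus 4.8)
The plan is to work with the horizontal transition matrices of $\M$ and exploit Proposition~\ref{cor:jmatrices}, which tells us that these equal the $J$ matrices of $\delta$. First I would translate $m$-column periodicity into a statement about transition matrices: if $m_{i\,j}=m_{i,j+m}$ for all $i,j$, then in particular every adjacent $k\times k$ submatrix satisfies $M_{i\,j}=M_{i,j+m}$, so the horizontal transition matrix satisfies $H_j=H_{j+m}$ for all $j$, which by Proposition~\ref{cor:jmatrices} says exactly that the $J$ matrices of $\delta$ are $m$-periodic.

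Next I would establish the product identity $J_iJ_{i+1}\cdots J_{i+m-1}=I_k$. The key observation is that the product of consecutive horizontal transition matrices governs how the $k\times k$ window slides: for any fixed row index $i$,
\[
M_{i\,j}\,H_jH_{j+1}\cdots H_{j+m-1}=M_{i,j+m}.
\]
Since $\M$ is $m$-column periodic, the right-hand side equals $M_{i\,j}$, so
\[
M_{i\,j}\bigl(H_jH_{j+1}\cdots H_{j+m-1}\bigr)=M_{i\,j}.
\]
Because $M_{i\,j}\in SL_k(\Z)$ is invertible, we may cancel it on the left to conclude $H_jH_{j+1}\cdots H_{j+m-1}=I_k$. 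Identifying $H_j$ with the $j$-th $J$ matrix $J_j$ of $\delta$ via Proposition~\ref{cor:jmatrices} gives $J_jJ_{j+1}\cdots J_{j+m-1}=I_k$ for all $j$, which is the desired conclusion (with the index $i$ in the statement playing the role of $j$ here).

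I do not anticipate a serious obstacle; the proof is essentially bookkeeping once one has Proposition~\ref{cor:jmatrices} in hand. The one point requiring mild care is the first step: one must check that column periodicity of the \emph{entries} $m_{i\,j}$ genuinely forces periodicity of the \emph{transition matrices}, i.e. that $M_{i\,j}=M_{i,j+m}$ for all $i$ implies $H_j=H_{j+m}$. This follows because $H_j$ is uniquely determined by the requirement $M_{i\,j}H_j=M_{i,j+1}$ together with the fact that $M_{i\,j}$ is invertible, so $H_j=M_{i\,j}^{-1}M_{i,j+1}=M_{i,j+m}^{-1}M_{i,j+m+1}=H_{j+m}$; alternatively, one can invoke the uniqueness of $J$-matrix transitions noted after Definition~\ref{def:jmatrices}. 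The product identity then drops out as above.
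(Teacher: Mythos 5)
Your proof is correct and follows essentially the same route as the paper: both arguments use the relation $M_{i\,j}H_jH_{j+1}\cdots H_{j+m-1}=M_{i,j+m}$ together with invertibility of $M_{i\,j}$ to force the product to be the identity, and uniqueness of the transition matrix to get $H_j=H_{j+m}$, identifying the $H_j$ with the $J$ matrices of $\delta$ via Proposition~\ref{cor:jmatrices}. Your version merely spells out the cancellation and uniqueness steps a bit more explicitly than the paper does.
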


	\begin{proof}
		We know that we may get from $M_{1\,i}$ to $M_{1\,i+m}$ by multiplying by $J_iJ_{i+1}\cdots J_{i+m-1}$.
		Since $M_{1\,i}=M_{1\,i+m}$, as $\M$ is $m$-column periodic, this product must be the identity $I_k$.
		Furthermore, since column $i$ and column $i+m$ are the same for all $i\in\Z$, the same $J$ matrix must be used to transition between them, so $J_i=J_{i+m}$ for all $i\in\Z$.
	\end{proof}
	
	This gives us the following proposition.
	
	\begin{proposition}\label{prop:periodicity}
		Fix a path $\delta$.
		The following are equivalent.
		\begin{enumerate}
			\item The $J$ matrices of $\delta$ are $m$-periodic and $J_1J_2\cdots J_m=I_k$.
			\item The path $\delta$ is $m$-periodic.
			\item The tiling $\M:=\Phi(\gamma,\delta)$ for any path $\gamma$ is $m$-column periodic.
		\end{enumerate}
	\end{proposition}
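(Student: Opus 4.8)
The plan is to prove the cyclic chain of implications $(1)\Rightarrow(2)\Rightarrow(3)\Rightarrow(1)$, which yields the claimed equivalence. Two of the three arrows are already isolated as lemmas, so the only genuinely new content is the middle implication, which is immediate from the definition of $\Phi$. The implication $(1)\Rightarrow(2)$ is exactly Lemma~\ref{periodicj} applied with $\delta$ in place of $\gamma$: if the $J$ matrices $\{J_i\}_{i\in\Z}$ of $\delta$ are $m$-periodic and $J_1J_2\cdots J_m=I_k$, then $\delta$ is $m$-periodic.

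For $(2)\Rightarrow(3)$ I would invoke the defining formula for $\Phi$ (equivalently for $\widetilde{\Phi}$ of Definition~\ref{phihat}): for any path $\gamma$ the entries of $\M=\Phi(\gamma,\delta)$ are $m_{i\,j}=\det(\gamma_i,\ldots,\gamma_{i+k-2},\delta_j)$. If $\delta_j=\delta_{j+m}$ for all $j\in\Z$, then replacing $\delta_j$ by $\delta_{j+m}$ in this determinant gives $m_{i\,j}=m_{i\,j+m}$ for all $i,j\in\Z$, i.e.\ $\M$ is $m$-column periodic; since $\gamma$ was arbitrary, this holds for every path $\gamma$.

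Finally, for $(3)\Rightarrow(1)$, assume $\M=\Phi(\gamma,\delta)$ is $m$-column periodic for some path $\gamma$ (statement (3) provides this, for every $\gamma$ in fact). Then Lemma~\ref{periodicm} yields at once that the $J$ matrices of $\delta$ are $m$-periodic and that $J_iJ_{i+1}\cdots J_{i+m-1}=I_k$ for all $i\in\Z$; specializing to $i=1$ gives $J_1J_2\cdots J_m=I_k$, which is statement (1). I do not expect a genuine obstacle here: the substance is carried by Lemmas~\ref{periodicj} and~\ref{periodicm}, and the only point requiring care is the quantifier on $\gamma$ in statement (3) — the step $(2)\Rightarrow(3)$ must deliver column periodicity for all $\gamma$ (which the determinant argument does), while $(3)\Rightarrow(1)$ only needs Lemma~\ref{periodicm} applied to a single fixed $\gamma$.
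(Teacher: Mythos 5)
Your proposal is correct and follows essentially the same route as the paper: the cyclic chain $(1)\Rightarrow(2)$ via Lemma~\ref{periodicj}, $(2)\Rightarrow(3)$ directly from the determinant formula defining $\Phi$, and $(3)\Rightarrow(1)$ via Lemma~\ref{periodicm}. Your remark about the quantifier on $\gamma$ in statement (3) is a sound point of care that the paper leaves implicit.
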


	\begin{proof}
		The implication $(1)\Rightarrow (2)$ follows from Lemma~\ref{periodicj}, and the implication $(3)\Rightarrow (1)$ from Lemma~\ref{periodicm}.
		For the implication $(2)\Rightarrow (3)$, suppose $\delta$ is $m$-periodic.
		If $\delta$ repeats every $m$ entries, the resulting determinants $m_{i\,j}=\det(\gamma_i,\ldots,\gamma_{i+k-1},\delta_j)$ will also repeat every $m$ entries going across the columns.
		Thus, the resulting $\M$ will be $m$-column periodic, as desired.
	\end{proof}

	\begin{remark}\label{periodicitydual}
		The dual of Proposition~\ref{prop:periodicity} is also true.
		That is, the following are equivalent.
		\begin{enumerate}
			\item The $J$ matrices of $\gamma$ are $m$-periodic and $J_1J_2\cdots J_m=I_k$.
			\item The path $\gamma$ is $m$-periodic.
			\item The tiling $\M:=\Phi(\gamma,\delta)$ for any path $\delta$ is $m$-row periodic.
		\end{enumerate}
	\end{remark}
	
	\begin{remark}\label{skewperiodicity}
	Analogous arguments can also be applied to show an equivalence between skew $m$-periodic paths $\gamma$ and skew $m$-row periodic tilings $\M:=\Phi(\gamma,\delta)$. 
	\end{remark}

	Together, these results give us the necessary and sufficient conditions for periodicity in a tiling as in Definition \ref{def:periodic}.

	\begin{corollary}\label{blockperiodic}
		The paths $\gamma$ and $\delta$ are $m$ and $n$ periodic, respectively, if and only if $\M:=\Phi(\gamma,\delta)$ is $(m\times n)$-periodic.
	\end{corollary}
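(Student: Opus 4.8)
The plan is to assemble Corollary~\ref{blockperiodic} from the three "one-directional periodicity" statements already in hand, namely Proposition~\ref{prop:periodicity} (for the path $\delta$ and column periodicity) and its dual Remark~\ref{periodicitydual} (for the path $\gamma$ and row periodicity), together with Definition~\ref{def:periodic} of an $(m\times n)$-periodic tiling. The key observation is that column periodicity and row periodicity of $\M$ are logically independent conditions, and each is controlled by exactly one of the two paths, so the biconditional factors as a conjunction of two biconditionals.

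First I would prove the forward direction. Assume $\gamma$ is $m$-periodic and $\delta$ is $n$-periodic. By the implication $(2)\Rightarrow(3)$ of Proposition~\ref{prop:periodicity}, since $\delta$ is $n$-periodic, the tiling $\M=\Phi(\gamma,\delta)$ is $n$-column periodic, i.e.\ $m_{i\,j}=m_{i\,j+n}$ for all $i,j$. By the corresponding implication $(2)\Rightarrow(3)$ of Remark~\ref{periodicitydual}, since $\gamma$ is $m$-periodic, $\M$ is $m$-row periodic, i.e.\ $m_{i\,j}=m_{i+m\,j}$ for all $i,j$. Combining these two equalities gives
\[
m_{i\,j}=m_{i+m\,j}=m_{i\,j+n}=m_{i+m\,j+n}
\]
for all $i,j\in\Z$, which is precisely the definition of $(m\times n)$-periodicity.

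For the converse, suppose $\M$ is $(m\times n)$-periodic. Then in particular $m_{i\,j}=m_{i\,j+n}$ for all $i,j$, so $\M$ is $n$-column periodic, and by the implication $(3)\Rightarrow(2)$ of Proposition~\ref{prop:periodicity} the path $\delta$ is $n$-periodic. Likewise, $(m\times n)$-periodicity gives $m_{i\,j}=m_{i+m\,j}$, so $\M$ is $m$-row periodic, and by the implication $(3)\Rightarrow(2)$ of Remark~\ref{periodicitydual} the path $\gamma$ is $m$-periodic. This completes both directions.

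There is essentially no serious obstacle here: the corollary is a bookkeeping consequence of results already established, and the only thing to be careful about is matching up which path governs rows versus columns (the asymmetry in $\Phi$, where $\gamma$ supplies the $k-1$ consecutive columns and $\delta$ the single column, means $\gamma$ controls rows via $\widetilde\gamma$ and $\delta$ controls columns, per Proposition~\ref{cor:jmatrices}) and confirming that Definition~\ref{def:periodic}'s $(m\times n)$-periodicity is equivalent to the conjunction of $m$-row periodicity and $n$-column periodicity. The latter equivalence is immediate since the four-term condition in the definition follows from the two two-term conditions and trivially implies each of them. One could also phrase the whole argument in terms of $J$ matrices via condition $(1)$ of Proposition~\ref{prop:periodicity} and Remark~\ref{periodicitydual}, but routing through condition $(2)$ is cleaner.
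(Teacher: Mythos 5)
Your proof is correct and follows exactly the paper's own argument: the paper also deduces the corollary from the equivalence $(2)\Leftrightarrow(3)$ in Proposition~\ref{prop:periodicity} together with Remark~\ref{periodicitydual}, splitting $(m\times n)$-periodicity into its row and column components. You have simply spelled out the bookkeeping that the paper leaves implicit.
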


	\begin{proof}
		This follows from $(2)\Leftrightarrow(3)$ in Proposition \ref{prop:periodicity} and Remark \ref{periodicitydual}.
	\end{proof}

	\subsection{Friezes}
	In this section, our goal is to show that the bijection $\Phi$ has a simple restriction to infinite friezes, namely that friezes are in bijection with pairs of paths where both paths are identical.
	We first define some notation to clarify this.
	
	\begin{definition}
		Define the inclusion function $\iota$ as follows.
		\begin{align*}
			\iota:\P_k/SL_k(\Z)&\rightarrow (\P_k\times\P_k)/SL_k(\Z)\\
			(\gamma)&\mapsto(\gamma,\gamma).
		\end{align*}
	\end{definition}

	Note that the image of the map $\iota$ is a subset of $(\P_k\times\P_k)/SL_k(\Z)$, so we may use it as a  restriction of the domain of the map $\Phi$.
	Additionally, note that $\iota$ is a bijection on its own image, so $\Phi\circ\iota$ is injective.
	We write $\Phi_\iota$ for $\Phi\circ\iota$.
	We wish to show that this restriction on the domain corresponds to a restriction on the range to tilings resulting from infinite $SL_k$-friezes.  
	
	We begin by defining a tiling obtained from an infinite frieze, which is similar to the construction given in Definition \ref{def:totiling} for finite friezes.
	
	\begin{definition} \label{def:infinitetotiling}
		Let $F$ be an infinite frieze.
        We define a tiling $\M_F$ as follows.
		Let the rows of $F$ become the falling diagonals of a tiling by rotating the frieze $45^\circ$ clockwise and reflecting across a vertical line.
		Since $F$ is infinite, the right half of the tiling $\M_F$ is complete.  Moreover, this portion of the tiling contains all the information about vertical and horizontal transition matrices, which can then be used to uniquely construct the left half of $\M_F$.
			\end{definition}

	\begin{remark}
        Let $F$ be a finite frieze and let $F'$ denote its infinite extension.  That is, $F'$ is an infinite frieze obtained by extending $F$ periodically to an infinite number of rows. 
		We want to verify that the tiling $\M_{F'}$ in Definition \ref{def:infinitetotiling} coincides with $\M_F$ which results from Definition \ref{def:totiling}.
		Observe that both $\M_F$ and $\M_{F'}$ will have the same right half by construction.
		Given such a portion of an infinite tiling, there is a unique complete tiling which results by extending to the left along the falling diagonals \cite[Proposition 7]{br10}.
		Both $\M_F$ and $\M_{F'}$ are valid tilings, and must therefore be the same.
	\end{remark}

	It should be noted that the left half of the tiling $\M_F$, though recoverable as an infinite frieze, need not be a periodic extension of $F$ as with finite friezes.
	It also need not have the same properties as $F$, such as positivity.
	However, since this part is uniquely determined by $F$, we can restrict ourselves to talking about the right half of $\M_F$.
	Let $\FR_k$ denote the set of all $SL_k$-tilings resulting from infinite $SL_k$-friezes.
	
	\begin{theorem}\label{thm:friezes}
		The restriction of the map $\Phi$ given by
		\begin{align*}
			\Phi_\iota:\P_k/SL_k(\Z)&\rightarrow \FR_k\\
			\gamma&\mapsto \M=(m_{i\,j})_{i,j\in\Z}, 
		\end{align*}
		where $m_{i\,j}=\det(\gamma_i,\ldots, \gamma_{i+k-2},\gamma_j)$ is a bijection between tame $SL_k$-tilings from infinite $SL_k$-friezes and equivalence classes of paths.
	\end{theorem}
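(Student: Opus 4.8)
The plan is to build on the two facts already in place: $\Phi$ is a bijection (Theorem~\ref{bijection}), and $\iota$ is a bijection onto its image, so $\Phi_\iota=\Phi\circ\iota$ is automatically injective. Hence the only real content is to identify the image of $\Phi_\iota$ with $\FR_k$, and I would establish the two inclusions in turn.

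For $\Phi_\iota(\P_k/SL_k(\Z))\subseteq\FR_k$: fix $\gamma\in\P_k$ and set $\M=\Phi(\gamma,\gamma)$, so $m_{i\,j}=\det(\gamma_i,\ldots,\gamma_{i+k-2},\gamma_j)$. For $j\in[i]^{k-1}=\{i,\ldots,i+k-2\}$ this determinant has a repeated column and vanishes, while for $j=i+k-1$ the path condition gives $m_{i\,j}=1$. Thus $\M$ has exactly $k-1$ falling diagonals of zeros, then one falling diagonal of ones, then arbitrary data diagonals below --- precisely the silhouette of an infinite frieze. Un-rotating the region $\{(i,j):j-i\ge 0\}$ of $\M$ produces an array $F$; its $k\times k$ and $(k+1)\times(k+1)$ diamonds correspond to adjacent $k\times k$ and $(k+1)\times(k+1)$ submatrices of $\M$, so properties (1) and (2) of Definition~\ref{def:frieze} for $F$ follow from $\M$ being a tame $SL_k$-tiling (Proposition~\ref{phi}). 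So $F$ is an infinite $SL_k$-frieze, and $\M$ together with $\M_F$ are tame $SL_k$-tilings that agree on $\{(i,j):j-i\ge 0\}$; by uniqueness of the skew extension (\cite[Proposition 7]{br10}, recalled after Definition~\ref{def:infinitetotiling}) they coincide, so $\M=\M_F\in\FR_k$.

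For $\FR_k\subseteq\Phi_\iota(\P_k/SL_k(\Z))$: given $\M=\M_F$, write $\M=\Phi(\gamma,\delta)$, keeping the indexing of the frieze-to-tiling construction so that $m_{i\,j}=0$ for $j\in[i]^{k-1}$ and $m_{i\,i+k-1}=1$. The vanishing conditions say $\delta_i,\ldots,\delta_{i+k-2}$ all lie in $\mathrm{span}(\gamma_i,\ldots,\gamma_{i+k-2})$; since $\delta_i,\ldots,\delta_{i+k-2}$ are $k-1$ linearly independent vectors (consecutive columns of a path) and the span is $(k-1)$-dimensional, the two families span the same hyperplane, so $(\gamma_i,\ldots,\gamma_{i+k-2})=(\delta_i,\ldots,\delta_{i+k-2})B_i$ for some $(k-1)\times(k-1)$ matrix $B_i$. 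Adjoining $\delta_{i+k-1}$ and comparing $k\times k$ determinants gives $1=m_{i\,i+k-1}=\det(B_i)\det(\delta_i,\ldots,\delta_{i+k-1})=\det(B_i)$, and therefore for every $j$
\[
m_{i\,j}=\det(\gamma_i,\ldots,\gamma_{i+k-2},\delta_j)=\det(B_i)\det(\delta_i,\ldots,\delta_{i+k-2},\delta_j)=\det(\delta_i,\ldots,\delta_{i+k-2},\delta_j),
\]
which is the $(i,j)$ entry of $\Phi(\delta,\delta)$. Hence $\Phi(\gamma,\delta)=\M=\Phi(\delta,\delta)$, and injectivity of $\Phi$ forces $[\gamma,\delta]=[\delta,\delta]=\iota([\delta])$, so $\M\in\Phi_\iota(\P_k/SL_k(\Z))$.

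The step I expect to need the most care is the frieze--tiling translation in the first inclusion: one must verify that the un-rotated array really has the legal shape of an infinite frieze (exactly $k-1$ zero rows, then the row of ones), that $45^\circ$ diamonds of $F$ are exactly the adjacent submatrices of $\M$ so the diamond rules transfer, and then invoke uniqueness of the skew extension to pin down the half of $\M$ not directly visible in $F$. The second inclusion is then a short linear-algebra computation once the indexing conventions are fixed.
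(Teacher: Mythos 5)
Your proof is correct, and while the first inclusion ($\Phi_\iota(\P_k/SL_k(\Z))\subseteq\FR_k$) follows the paper's well-definedness argument almost verbatim --- repeated columns give the $k-1$ zero diagonals, the path condition gives the diagonal of ones, and tameness of $\Phi(\gamma,\gamma)$ from Proposition~\ref{phi} gives the diamond rules --- your treatment of surjectivity is genuinely different from the paper's. The paper constructs the preimage explicitly: it takes $\gamma$ to be the horizontal strip of $\M_F$ in rows $[k]$, notes that the first row of $\Phi_\iota(\gamma)$ matches that of $\M$, and then propagates row by row by replacing $\gamma$ with $V_1^T\gamma$, $V_2^TV_1^T\gamma,\dots$ and invoking the invariance of Lemma~\ref{lem:invariance}, so that every row of $\Phi_\iota(\gamma)$ is matched against the corresponding row of $\M$. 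You instead start from the surjectivity of $\Phi$ already established in Theorem~\ref{bijection}, write $\M=\Phi(\gamma,\delta)$, and use the vanishing of $m_{i\,j}$ for $j\in[i]^{k-1}$ to show that $(\gamma_i,\dots,\gamma_{i+k-2})$ and $(\delta_i,\dots,\delta_{i+k-2})$ span the same hyperplane and differ by a change of basis $B_i$ with $\det(B_i)=1$, whence $m_{i\,j}=\det(\delta_i,\dots,\delta_{i+k-2},\delta_j)$ and $\M=\Phi_\iota(\delta)$. Your route buys a shorter, purely linear-algebraic verification that avoids any bookkeeping with transition matrices, and it cleanly explains \emph{why} the two paths of a frieze tiling must coincide (up to the $SL_k(\Z)$ action) rather than just exhibiting one path that works; the paper's route is more constructive in that it names the preimage path directly as a strip of the tiling, which is useful later (e.g.\ in Corollary~\ref{cor:finiteperiodicfrieze}). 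One small point of care in your version: the step $\det(\gamma_i,\dots,\gamma_{i+k-2},\delta_j)=0\Rightarrow\delta_j\in\mathrm{span}(\gamma_i,\dots,\gamma_{i+k-2})$ uses that $k-1$ consecutive columns of a path are linearly independent, which you implicitly have from the path condition; it is worth stating explicitly, but it is not a gap.
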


	\begin{proof}
		We first show that the map is well-defined.
		Let $\gamma\in\P_k$ and let $\M=\Phi_\iota(\gamma)$.
		Observe that the entries $m_{i\,j}$ in $\M$ where $i\in\Z$ and $j\in[i]^{k-1}$ are all zeros since they are given by determinants of matrices of the form $(\gamma_i,\ldots,\gamma_{i+k-2},\gamma_j)$ where $\gamma_j$ is the same as one of the previous columns.
		These constitute $k-1$ falling diagonals of zero entries.
		The next falling diagonal consisting of entries of the form $m_{i\,i+k-1}$ is all ones, since these are given by determinants of matrices formed by $k$ adjacent columns in $\gamma$.
		Thus, the right half of $\M$ is recoverable as an infinite frieze $F$.

		We now show that the map is a bijection.
		The map $\Phi_\iota$ inherits injectivity from the injectivity of $\Phi$ and $\iota$.
        For surjectivity, let $F$ be an infinite frieze (possibly the result of an extension of a finite frieze) and let $\M=\M_F$ be its corresponding tiling.  Then the tiling $\M$ has the following structure, where the leftmost zero in first row corresponds to the entry $m_{11}$.  Hence it has $k-1$ diagonals of zero's, followed to the right by diagonals of 1's and to the left by the diagonal of $(-1)^{k-1}$. 
        
        \[
        \begin{array}{ccccccccccccc}
        & \vdots & \vdots & \vdots & \vdots &\vdots&\vdots&\vdots&\vdots&\vdots\\
        \cdots & (-1)^{k-1} & 0 & 0  & \cdots & 0 & 1 & m_{1,k+1} & m_{1,k+2} & \cdots \\
        \cdots & m_{2,0} & (-1)^{k-1} & 0 & \cdots & &0 & 1 & m_{2,k+2} & m_{2,k+3} & \cdots\\
        \cdots & m_{3,0} & m_{3,1}& (-1)^{k-1}  & 0 & \cdots & &0& 1 & m_{3,k+3} & m_{3,k+2} & \cdots\\
        && \ddots & \ddots & \ddots &\ddots & &\ddots&\ddots &\ddots  &\ddots  \\
    & &\cdots &m_{k,k-1}&m_{k,k-2}& (-1)^{k-1} & 0 & \cdots & 0 & 1 & m_{k, 2k}&\cdots \\
    &&&\vdots & \vdots & \vdots & \vdots & \ddots &&&\vdots
        \end{array}
        \]
        
Let $\gamma$ be the horizontal strip of $\M$ in rows $[k]$, that is $\gamma_i = (m_{1i}, m_{2i}, \dots, m_{ki})^T$.  We claim that $\M=\Phi_{\iota}(\gamma)$.  By definition of $\Phi$ and the structure of $\gamma$ shown above, we see that the first row of $\Phi_{\iota}(\gamma)$ agrees with the first row of $\M$.  Let $V_1$ be the the vertical transition matrix of $\M$, see Definition~\ref{def:horizontalandverticaltransitions}, then $V_1^T\gamma$ is the horizontal strip of $\M$ in rows $2, \dots, k+1$.   Moreover, by Lemma~\ref{lem:invariance} we have $\Phi_{\iota}(\gamma)=\Phi_{\iota}(V_1^T\gamma)$.  By the same reasoning as above, it is easy to see that the second row of $\Phi_{\iota}(V_1^T\gamma)$ agrees with the second row of $\M$.   Now continuing in this way and multiplying by transposes of the vertical transition matrices $V_2, V_3, \dots$ and their inverses $V_{0}^{-1}, V_{-1}^{-1}, \dots$ the tiling $\M$ equals $\Phi_{\iota}(\gamma)$ for all rows. 
       	\end{proof}

	Since infinite friezes correspond to paths, we can further show that finite friezes correspond to periodic tilings and hence periodic paths. 
	Thus, we can make a further restriction.
	We denote by $\FR_{k,n}$ the set of all $SL_k$-tilings resulting from $SL_k$-friezes of type $(k,n)$.
	Let $\P_{k,n}$ denote the set of skew $n$-periodic paths in $\P_k$, see Definition~\ref{def:skewperiodic}.
	
	\begin{corollary}\label{cor:finiteperiodicfrieze}
		The restriction of $\Phi_\iota$ to skew $n$-periodic paths
		\[
		\Phi_\iota:\P_{k,n}/SL_k(\Z)\rightarrow\FR_{k,n}
		\] 
		is a bijection between skew $n$-periodic paths and tilings from $SL_k$-friezes of type $(k,n)$.
	\end{corollary}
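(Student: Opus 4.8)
The plan is to deduce this from Theorem~\ref{thm:friezes} and the periodicity dictionary of this section; the only substantive point is recognizing which tilings in $\FR_k$ arise from \emph{finite} friezes of width $n-k-1$. Since $\Phi_\iota=\Phi\circ\iota$ is injective (as $\Phi$ is a bijection and $\iota$ is injective on its image) and $\P_{k,n}/SL_k(\Z)\subseteq\P_k/SL_k(\Z)$, the restricted map is automatically injective, so it suffices to prove that for $\gamma\in\P_k$ one has $\gamma\in\P_{k,n}$ if and only if $\Phi_\iota(\gamma)\in\FR_{k,n}$; as usual we take $n\geq k+2$ so that the associated frieze has positive width.

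For the forward implication, assume $\gamma_{j+n}=(-1)^{k-1}\gamma_j$ for all $j$ and put $\M=\Phi_\iota(\gamma)$. Multilinearity of the determinant in the last column gives $m_{i\,j+n}=\det(\gamma_i,\dots,\gamma_{i+k-2},(-1)^{k-1}\gamma_j)=(-1)^{k-1}m_{i\,j}$, so $\M$ is skew $n$-column periodic, and by Theorem~\ref{thm:friezes} $\M=\M_F$ for an infinite frieze $F$. From the diagonal structure of such a tiling established in the proof of Theorem~\ref{thm:friezes}, the falling diagonals $m_{i\,i},\dots,m_{i\,i+k-2}$ of $\M$ are identically $0$ and the diagonal $m_{i\,i+k-1}$ is identically $1$; moreover $m_{i\,i-1}=(-1)^{k-1}\det(\gamma_{i-1},\gamma_i,\dots,\gamma_{i+k-2})=(-1)^{k-1}$. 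Applying skew $n$-column periodicity, the diagonal $m_{i\,i+n-1}$ is then identically $1$ and the diagonals $m_{i\,i+n},\dots,m_{i\,i+n+k-2}$ are identically $0$. Since a frieze can have $k-1$ consecutive zero rows only at its top or bottom border, this forces $F$ to be the periodic extension of a finite frieze whose bottom zero border sits at offset $n$, hence of width $(n-1)-(k-1)-1=n-k-1$; by Remark~\ref{rem:pluckertofrieze} that frieze is of type $(k,n)$, so $\M\in\FR_{k,n}$.

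For the reverse implication, suppose $\M=\Phi_\iota(\gamma)\in\FR_{k,n}$, say $\M=\M_F$ with $F$ of type $(k,n)$ and width $w=n-k-1$; note $\M\in\FR_k$ since $\M_F=\M_{F'}$ for the infinite periodic extension $F'$ of $F$. By Definition~\ref{def:totiling}, $m_{i\,j+w+k+1}=(-1)^{k-1}m_{i\,j}$, so $\M$ is skew $n$-column periodic. Since $\M=\Phi(\gamma,\gamma)$, the skew analogue of Proposition~\ref{prop:periodicity} (its column version for the second path, cf. Remark~\ref{skewperiodicity}) forces $\gamma$ to be skew $n$-periodic, i.e. $\gamma\in\P_{k,n}$. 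Together with injectivity, the two implications give the desired bijection.

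The hard part will be the forward implication: pinning down the exact width of the underlying finite frieze from the skew periodicity of $\M$ requires careful tracking of which falling diagonals of $\M$ are identically $0$, $1$, or $(-1)^{k-1}$, and of the signs $(-1)^{k-1}$ introduced when columns are permuted or rescaled inside a determinant. The remainder is a routine combination of the bijection $\Phi_\iota$ with the periodicity correspondences already in hand.
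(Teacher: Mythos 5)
Your argument is correct, and it reaches the conclusion by a somewhat different route than the paper. The paper's proof is a three-line citation: well-definedness from Remark~\ref{skewperiodicity} together with Theorem~\ref{thm:friezes}, surjectivity by explicitly constructing a preimage --- realizing a frieze of type $(k,n)$ as $\F_{(k,n)}(A_F)$ via Remark~\ref{rem:pluckertofrieze} and taking the skew periodic path $\phi_{A_F}$, which maps to $\M_F$ by Lemma~\ref{lem:frieze} --- and injectivity inherited from $\Phi_\iota$. You instead prove the biconditional ``$\gamma\in\P_{k,n}$ iff $\Phi_\iota(\gamma)\in\FR_{k,n}$'' and combine it with injectivity; your reverse implication gets surjectivity for free from the already-established bijection $\Phi_\iota\colon\P_k/SL_k(\Z)\to\FR_k$ plus the tiling-to-path direction of the (skew, column) periodicity dictionary, rather than building a preimage from the Grassmannian. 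What your version buys is a genuinely more careful treatment of the forward implication: the paper's citation of Remark~\ref{skewperiodicity} and Theorem~\ref{thm:friezes} only directly gives that the image is a skew-periodic tiling in $\FR_k$, whereas pinning down that the underlying finite frieze has width exactly $n-k-1$ (hence type $(k,n)$) really does require the diagonal bookkeeping you carry out ($m_{i\,i-1}=(-1)^{k-1}$, hence $m_{i\,i+n-1}=1$ and $m_{i\,i+n},\dots,m_{i\,i+n+k-2}=0$), and your signs check out. Two small remarks: the sentence ``a frieze can have $k-1$ consecutive zero rows only at its top or bottom border'' is unnecessary (and not a fact the paper proves) --- the direct computation of the bordering diagonals already exhibits the finite frieze structure of Definition~\ref{def:frieze} and matches the skew extension rule of Definition~\ref{def:totiling}; and the column-skew analogue of Proposition~\ref{prop:periodicity} that you invoke for the second path is not literally stated in Remark~\ref{skewperiodicity} (which addresses rows and the first path), though it follows by the same transition-matrix argument, at the same level of rigor the paper itself adopts.
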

	
	\begin{proof}
		The map is well-defined as a result of Remark~\ref{skewperiodicity} and Theorem \ref{thm:friezes}.
		Surjectivity follows from Remark \ref{rem:pluckertofrieze}, as these particular elements of the Grassmannian $A_F$ yield the skew periodic paths $\gamma=\phi_{A_F}$. 
		Injectivity is inherited from the injectivity of $\Phi_\iota$.
	\end{proof}

	\subsection{Duality}
	In this section we explore the connection between duality for tilings, see Definition \ref{def:dual}, and the map $\Phi$.  
	We show that the dual $\M^*$ of an $SL_k$-tiling $\M:=\Phi(\gamma,\delta)$ has a simple interpretation in terms of the map $\Phi$. 
	In particular, it is given by $\Phi(A\widetilde{\gamma},\widetilde{\delta})$, i.e. a shift of conjugacy class for $\gamma$ in $\P_k/SL_k(\Z)$ together with the tilde operator.
	We begin by presenting several results about the dual as it relates to Pl\"ucker friezes and their $J$ matrices.
	A special case of Theorem \ref{thm:dualtiling} for friezes was proven by Morier-Genoud, Ovsienko, Schwatz, and Tobachnikov \cite{mgost13} as well as Cordes and Roselle \cite{cr72}.
   
	We first give a lemma which shows that the dual preserves $J$ matrices up to the tilde operator.
	
	\begin{lemma}\label{lem:dualj}
		Let $\M=\Phi(\gamma,\delta)$ be an $SL_k$-tiling and let $\M^*=\Phi(\gamma^*,\delta^*)$ be its dual.
		\begin{itemize}
		\item[(a)] The sequences of transition matrices for $\widetilde{\gamma}$ and $\widetilde{\delta}$ coincide with those for $\gamma^*$ and $\delta^*$ respectively. 
				\item[(b)] $(\M^*)^*_{[k],[k]}=\M_{[k-1]^k, [k-1]^k}$.
		\end{itemize}
	\end{lemma}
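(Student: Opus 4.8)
The plan is to reduce both statements to the case of a tiling coming from a frieze, where the Plücker machinery of Section~\ref{sec:1} lets one compute the dual explicitly, and then to read (a) off Proposition~\ref{cor:jmatrices} and (b) off an iteration. For the reduction, note that every entry of $\M^*$ is a determinant of a $(k-1)\times(k-1)$ window of $\M$, every entry of $(\M^*)^*$ is built from a bounded window of $\M$, and the horizontal and vertical transition matrices of a tiling, as well as the tilde operation on $J$ matrices of Definition~\ref{def:tildepaths}, depend only on bounded windows. So by Lemma~\ref{lem:friezemapping} it suffices to treat $\M=\M_{\F_{(k,n)}(A)}$ with all consecutive Plücker coordinates of $A$ equal to $1$; moreover $n$ may be taken as large as we like, so that every window occurring below lies in the diagonal strip $i\le j\le n+i-1$ of Lemma~\ref{lem:frieze}, on which $m_{ij}=p_{o([i]^{k-1},j)}(A)$ exactly.

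\emph{The dual of a frieze tiling.} The submatrix $\M_{[i]^{k-1},[j]^{k-1}}$ is the evaluation at $A$ of the matrix $A_{[j]^{k-1};\,i}$ of Definition~\ref{def:pluckermatrix} with $s=k-1$. For suitably positioned windows the hypotheses (c1), (c2) of Proposition~\ref{prop:pluckerdeterminant} hold, the product of consecutive Plücker coordinates in the resulting formula equals $1$, and we obtain
\[
\M^*_{ij}=\det\M_{[i]^{k-1},[j]^{k-1}}=p_{o\left(i+k-2,\,j,\,j+1,\dots,j+k-2\right)}(A)=m_{j,\,i+k-2},
\]
the last equality because both sides are the Plücker coordinate on the sorted set $\{i+k-2\}\cup[j]^{k-1}$; by uniqueness of the tame $SL_k$-extension this identity holds for all $i,j$. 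Thus $\M^*$ is the transpose of $\M$ with one index translated by $k-2$. Iterating gives $(\M^*)^*_{ij}=\M^*_{j,\,i+k-2}=m_{i+k-2,\,j+k-2}$, and restricting to $i,j\in[k]$ yields $(\M^*)^*_{[k],[k]}=\M_{[k-1]^k,[k-1]^k}$, which is (b).

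\emph{Transition matrices and (a).} The identity $\M^*_{ij}=m_{j,\,i+k-2}$ says that column $j$ of $\M^*$ is row $j$ of $\M$, and row $i$ of $\M^*$ is column $i+k-2$ of $\M$, up to a uniform relabelling of entries that leaves all linear relations among rows and columns unchanged. Hence the horizontal transition matrices of $\M^*$ are the vertical transition matrices of $\M$, and the vertical transition matrices of $\M^*$ are the horizontal transition matrices of $\M$ shifted by $k-2$. Writing $\M=\Phi(\gamma,\delta)$ and $\M^*=\Phi(\gamma^*,\delta^*)$, with $\gamma=\delta$ in the frieze case, Proposition~\ref{cor:jmatrices} identifies the $J$ matrices of $\delta$, $\widetilde\gamma$, $\delta^*$, $\widetilde{\gamma^*}$ with the horizontal transitions of $\M$, the vertical transitions of $\M$, the horizontal transitions of $\M^*$, and the vertical transitions of $\M^*$, respectively. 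Comparing, the $J$ matrices of $\delta^*$ agree with those of $\widetilde\gamma=\widetilde\delta$; and the $J$ matrices of $\widetilde{\gamma^*}$ agree with the $J$ matrices of $\gamma$ shifted by $k-2$, so applying the tilde operation once more and using $\widetilde{\widetilde{J_i}}=J_{i+k-2}$ from Remark~\ref{rem:tildeshift}, the two shifts cancel and the $J$ matrices of $\gamma^*$ agree with those of $\widetilde\gamma$. Undoing the reduction of the first paragraph gives (a) for an arbitrary tame $SL_k$-tiling.

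\emph{The main obstacle.} The one step that is not purely formal is the identity $\M^*_{ij}=m_{j,\,i+k-2}$: Proposition~\ref{prop:pluckerdeterminant} produces a Plücker coordinate on an $o(\cdot)$-reordered index set, whereas the entries of $\M$ and of $\M^*$ carry the $(-1)^{k-1}$-twists of the skew-periodic extension of Definition~\ref{def:totiling}, and these must be matched. Since those twists recur with period $n>k$, while a transition matrix is read off $k+1$ consecutive rows or columns — which, as $n>k$, may be chosen to avoid a period boundary — the twists do not affect the $J$ matrices, and one checks that the residual sign in the displayed identity is trivial. Everything else follows from results already established.
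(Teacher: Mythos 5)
Your proposal is correct and follows essentially the same route as the paper's proof: reduce to a tiling from a frieze via Lemma~\ref{lem:friezemapping} and Definition~\ref{def:friezemap}, apply Proposition~\ref{prop:pluckerdeterminant} to identify the dual's entries as Pl\"ucker coordinates with the roles of the consecutive run and the extra index swapped (a transpose with a shift by $k-2$), and then deduce (a) from Proposition~\ref{cor:jmatrices} together with Remark~\ref{rem:tildeshift} and (b) by iterating the same computation. The only cosmetic difference is that you package the Pl\"ucker computation as the single identity $\M^*_{ij}=m_{j,\,i+k-2}$ in the frieze case, while the paper chases the transition matrices through the off-diagonal window $\M'_{[3k-1],[i]^{3k-1}}$, where the positioning automatically guarantees condition (c2) of Proposition~\ref{prop:pluckerdeterminant} — the point you correctly flag as the main thing to check.
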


	\begin{proof}
		Let $M^*:=\M^*_{[2k+1],[2k+1]}$.
		Note that $M^*$ is entirely determined by $\M_{[3k-1],[3k-1]}$.  Construct a matrix $A$ and a path $\phi(\gamma, \delta)=\phi_{4k,4k}(\gamma,\delta)$ as in Definition \ref{def:friezemap}.  
		Thus, in particular the first $4k$ columns of $A$ equal $\gamma_{[4k]}$ and for some $i$ the columns $[i]^{4k}$ of $A$ are equal $\delta_{[4k]}$.  Moreover, the path $\phi(\gamma,\delta)=\phi_A$ is the skew periodic extension of $A$.
		Let $\M'=\Phi(\phi(\gamma,\delta),\phi(\gamma,\delta))$ be the tiling resulting from this path. 
		By Lemma~\ref{lem:friezemapping} 
        \[
        \M_{[3k-1],[3k-1]}=\M'_{[3k-1],[i]^{3k-1}}=\begin{pmatrix}
            p_{[1]^{k-1}\,i}(A) & p_{[1]^{k-1}\,i+1} (A)& \cdots & p_{[1]^{k-1}\,i+3k-2}(A)\\
            p_{[2]^{k-1}\,i}(A) & p_{[2]^{k-1}\,i+1} (A)& \cdots & p_{[2]^{k-1}\,i+3k-2}(A)\\
            \vdots & \cdots & \ddots & \vdots\\
            p_{[3k-1]^{k-1}\,i}(A) & p_{[3k-1]^{k-1}\,i+1} (A)& \cdots & p_{[3k-1]^{k-1}\,i+3k-2}(A)
        \end{pmatrix}.
        \]
        Since the dual of a tiling is obtained by taking determinants of $(k-1)\times(k-1)$ submatrices, Proposition \ref{prop:pluckerdeterminant} implies that 
        \[
        \left( \M'_{[3k-1],[i]^{3k-1}} \right)^*=
        \begin{pmatrix}
            p_{[i]^{k-1}\,k-1}(A) & p_{[i+1]^{k-1}\,k-1} (A)& \cdots & p_{[i+2k-1]^{k-1}\,k-1}(A)\\
            p_{[i]^{k-1}\,k}(A) & p_{[i+1]^{k-1}\,k} (A)& \cdots & p_{[i+2k-1]^{k-1}\,k}(A)\\
            \vdots & \cdots & \ddots & \vdots\\
            p_{[i]^{k-1}\,3k-2}(A) & p_{[i+1]^{k-1}\,3k-2} (A)& \cdots & p_{[i+2k-1]^{k-1}\,3k-2}(A)
        \end{pmatrix}.
        \]
        Let $\M^*=\Phi(\gamma^*,\delta^*)$.  By Proposition~\ref{cor:jmatrices}, the first $J$ matrix of $\delta^*$ equals the first horizontal transition matrix for $\M^*$, which by the calculations above is the same as $i$-th vertical transition matrix for $\M'=\Phi(\phi(\gamma,\delta),\phi(\gamma,\delta))$.  By the same corollary this is the same as $i$-th $J$ matrix of the path $\widetilde{\phi(\gamma,\delta)}$, which in turn is the same as the first $J$ matrix of $\tilde\delta$ by construction of $\phi(\gamma,\delta)$.

        Similarly, since $\M^*=\Phi(\gamma^*,\delta^*)$ then Proposition~\ref{cor:jmatrices} implies that the second $J$ matrix of $\widetilde{\gamma^*}$ is the second vertical transition matrix for $\M^*$.  By looking at $\M^*$ and $\M'$ above, this is the same as $k$-th horizontal transition matrix for $\M'$, which in turn is the same as $k$-th $J$ matrix for $\phi(\gamma,\delta)$, which again is the same as the $k$-th $J$ matrix for $\gamma$ by construction of $\phi(\gamma,\delta)$.  Applying the tilde operator, we conclude that the second $J$ matrix of $\widetilde{\widetilde{ \gamma^*}}$ equals $k$-th $J$ matrix of $\widetilde\gamma$.  By Remark~\ref{rem:tildeshift} we see that this is the same as $k$-th $J$ matrix of $\gamma^*$.         Thus, the dual operator preserves $J$ matrices up to the tilde operator, which shows part (a).

        To show part (b) we calculate the $k\times k$ submatrix of $(\M^*)^*$. By Proposition \ref{prop:pluckerdeterminant} we have 
        \[
        (\M^*)^*_{[k],[k]}=
        \begin{pmatrix}
                p_{o([k-1]^{k-1},i+k-2)}(A) & p_{o([k-1]^{k-1},i+k-1)} (A)& \cdots & p_{o([k-1]^{k-1},i+2k-3)}(A)\\
                p_{o([k]^{k-1},i+k-2)} (A)& p_{o([k]^{k-1},i+k-1)} (A)& \cdots & p_{o([k]^{k-1},i+2k-3)}(A)\\
                \vdots & \vdots & \ddots & \vdots\\
                p_{o([2k-2]^{k-1},i+k-2)} (A)& p_{o([2k-2]^{k-1},i+k-1)} (A)& \cdots & p_{o([2k-2]^{k-1},i+2k-3)}(A)
            \end{pmatrix},
        \]
        which is just $\M_{[k-1]^k,[k-1]^k}$.
        This shows part (b).
	\end{proof}

	It should be noted that, although the dual operator preserves information about the transition matrices, it is less clear what happens to the $k\times k$ submatrix $\M^*_{[k],[k]}$ in relation to the initial matrix $\M_{[k],[k]}$.
    Thus, applying the dual to the tiling $\Phi(\gamma,\delta)$ may not preserve the conjugacy class of paths $(\widetilde\gamma,\widetilde\delta)$ in ${(\P_k\times \P_k)/ SL_k(\Z)}$.
	
	\begin{theorem}\label{thm:dualtiling}
		Let $\M=\Phi(\gamma,\delta)$ be an $SL_k$-tiling.
		Then its dual $\M^*=\Phi(A\widetilde{\gamma},\widetilde{\delta})$ for some $A\in SL_k(\Z)$.
	\end{theorem}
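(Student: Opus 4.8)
The plan is to leverage Lemma~\ref{lem:dualj}(a), which already pins down the transition matrices of the dual tiling, and then upgrade that information from equality of $J$ matrices to an equality of paths modulo the diagonal action.

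First I would write $\M^* = \Phi(\gamma^*, \delta^*)$, which is legitimate since the dual of a tame $SL_k$-tiling is again a tame $SL_k$-tiling and $\Phi$ is a bijection by Theorem~\ref{bijection}. By Lemma~\ref{lem:dualj}(a), the sequence of transition ($J$) matrices of $\gamma^*$ coincides with that of $\widetilde\gamma$, and the sequence of transition matrices of $\delta^*$ coincides with that of $\widetilde\delta$.

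The key step is an elementary rigidity observation: two paths in $\P_k$ with the same sequence of transition matrices differ by left multiplication by a single matrix in $SL_k(\Z)$. To see this, for a path $\mu$ set $G^\mu_i := (\mu_i,\ldots,\mu_{i+k-1}) \in SL_k(\Z)$; the defining relation $(\gamma_i,\ldots,\gamma_{i+k-1})J_i = (\gamma_{i+1},\ldots,\gamma_{i+k})$ says precisely $G^\mu_{i+1} = G^\mu_i J_i$. Hence if $\mu$ and $\nu$ share the same $\{J_i\}_{i\in\Z}$, the matrix $G^\nu_i (G^\mu_i)^{-1}$ is independent of $i$; calling it $B$, one has $B \in SL_k(\Z)$, and $\nu_j = B\mu_j$ for every $j$ because each entry of a path appears as a column of some consecutive window $G^\mu_i$. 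Applying this to the pairs $(\gamma^*,\widetilde\gamma)$ and $(\delta^*,\widetilde\delta)$ yields $\gamma^* = A_1\widetilde\gamma$ and $\delta^* = A_2\widetilde\delta$ for some $A_1, A_2 \in SL_k(\Z)$.

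Finally I would conclude using invariance under the diagonal action: by Lemma~\ref{lem:invariance},
\[
\M^* = \Phi(\gamma^*,\delta^*) = \Phi(A_1\widetilde\gamma,\, A_2\widetilde\delta) = \Phi(A_2^{-1}A_1\,\widetilde\gamma,\, \widetilde\delta),
\]
so the claim holds with $A = A_2^{-1}A_1 \in SL_k(\Z)$. The substantive input here is entirely Lemma~\ref{lem:dualj}(a) (already established) together with the rigidity step; the point to be careful about is that the matching of $J$-matrix sequences in Lemma~\ref{lem:dualj}(a) is index-for-index with no hidden index shift — if a shift were present one would need to additionally absorb a translation of the tiling, which is consistent with $(\M^*)^*$ equaling $\M$ only up to translation, so keeping the indices straight at this junction is where I would concentrate the care.
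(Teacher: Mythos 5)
Your proof is correct. It rests on the same key input as the paper's proof, namely Lemma~\ref{lem:dualj}(a), but you close the argument by a genuinely different mechanism. The paper works at the level of tilings: it notes that the transition matrices of $\Phi(A\widetilde{\gamma},\widetilde{\delta})$ agree with those of $\M^*$ for any $A$ (via Proposition~\ref{cor:jmatrices}), and then --- implicitly invoking the fact that a tame tiling is determined by its linearization data together with its central $k\times k$ block, as in Proposition~\ref{prop:linearizationdata} --- asserts that $A$ can be chosen so that the central blocks agree. You instead work at the level of paths: you write $\M^*=\Phi(\gamma^*,\delta^*)$, prove the rigidity statement that two paths sharing a transition-matrix sequence differ by a single left factor in $SL_k(\Z)$ (the observation that $G^{\nu}_i\left(G^{\mu}_i\right)^{-1}$ is constant in $i$ is exactly the right computation, and each path entry does occur as a column of some window), and then use Lemma~\ref{lem:invariance} to merge the two factors into $A=A_2^{-1}A_1$. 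What your route buys is that the existence of the required $A$ falls out of the rigidity lemma rather than having to be argued by adjusting a central block, which is the one step the paper's proof leaves terse. Your flagged worry about an index shift is reasonable but does not materialize: Lemma~\ref{lem:dualj}(a) is stated and proved index-for-index (the shift by $k-2$ from Remark~\ref{rem:tildeshift} is already absorbed into that lemma's bookkeeping), so the hypothesis of your rigidity step holds as written.
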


	\begin{proof}
		By Lemma \ref{lem:dualj}(a), the sequence of $J$ matrices associated with $\widetilde{\gamma}$ correspond to the sequence of vertical transition matrices of $\M^*$.
		Similarly, the sequence of $J$ matrices of $\widetilde{\delta}$ correspond to the sequence of horizontal transition matrices of $\M^*$.
		Thus, this maintains the linearization data up to the tilde operator.
        Multiplying $\widetilde{\delta}$ by an appropriate matrix $A\in SL_k(\Z)$ preserves the $J$ matrices by Lemma \ref{lem:invariance}.
        Doing so changes the central $k\times k$ adjacent submatrix ensuring that $\Phi(A\widetilde{\gamma},\widetilde{\delta})_{[k],[k]}=\M^*_{[k],[k]}$. 
        	\end{proof}

	As a corollary, we can also easily recover the fact the dual operator has the desired duality property.
	
	\begin{corollary}
		For any $SL_k$-tiling $\M$, $(\M^*)^*$ equals $\M$ up to a shift in indices by $k-2$. 
	\end{corollary}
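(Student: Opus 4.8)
The plan is to reduce everything to the bijection $\xi$ of Proposition~\ref{prop:linearizationdata}: a tame $SL_k$-tiling is determined by its central $k\times k$ block $\M_{[k],[k]}$ together with its linearization data, equivalently by its central block together with its horizontal and vertical transition matrices. Writing $\M=\Phi(\gamma,\delta)$, let $\M^{\mathrm{sh}}$ denote the tiling with $\M^{\mathrm{sh}}_{i\,j}=m_{i+k-2\,j+k-2}$; from the determinant formula defining $\Phi$ one reads off $\M^{\mathrm{sh}}=\Phi(\gamma^{\mathrm{sh}},\delta^{\mathrm{sh}})$ where $\gamma^{\mathrm{sh}}_i=\gamma_{i+k-2}$ and $\delta^{\mathrm{sh}}_j=\delta_{j+k-2}$, so the $i$-th $J$ matrix of $\gamma^{\mathrm{sh}}$ (resp.\ $\delta^{\mathrm{sh}}$) is the $(i+k-2)$-th $J$ matrix of $\gamma$ (resp.\ $\delta$). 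It then suffices to check that $(\M^*)^*$ agrees with $\M^{\mathrm{sh}}$ in both pieces of data.

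For the transition matrices, I would apply Theorem~\ref{thm:dualtiling} twice: $\M^*=\Phi(A\widetilde{\gamma},\widetilde{\delta})$ and hence $(\M^*)^*=\Phi\big(B\widetilde{A\widetilde{\gamma}},\widetilde{\widetilde{\delta}}\big)$ for some $A,B\in SL_k(\Z)$. Left multiplication by an element of $SL_k(\Z)$ does not change the $J$ matrices of a path, and by Definition~\ref{def:tildepaths} the tilde operation depends only on those $J$ matrices, so $\widetilde{A\widetilde{\gamma}}$ has the same $J$ matrices as $\widetilde{\widetilde{\gamma}}$; by Remark~\ref{rem:tildeshift} these are precisely the $J$ matrices of $\gamma$ shifted by $k-2$, i.e.\ those of $\gamma^{\mathrm{sh}}$, and symmetrically $\widetilde{\widetilde{\delta}}$ and $\delta^{\mathrm{sh}}$ have the same $J$ matrices. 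By Proposition~\ref{cor:jmatrices}, the horizontal transition matrices of $\Phi(\zeta,\varepsilon)$ are the $J$ matrices of $\varepsilon$ and the vertical ones are the $J$ matrices of $\widetilde{\zeta}$, which again only depend on the $J$ matrices of $\zeta$; hence $(\M^*)^*$ and $\M^{\mathrm{sh}}$ have identical horizontal and vertical transition matrices. For the central block, Lemma~\ref{lem:dualj}(b) gives $(\M^*)^*_{[k],[k]}=\M_{[k-1]^k,[k-1]^k}=\M^{\mathrm{sh}}_{[k],[k]}$. Injectivity of $\xi$ then yields $(\M^*)^*=\M^{\mathrm{sh}}$, which is $\M$ shifted by $k-2$.

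The only delicate point, and the part I would write out most carefully, is the index bookkeeping: one must confirm that the shift by $k-2$ coming from iterating the tilde operation in Remark~\ref{rem:tildeshift} lines up exactly --- with no extra offset --- with the index shift in $\M^{\mathrm{sh}}$ and with the indexing convention for the transition matrices $H_i,V_i$ of a tiling, so that the two tilings match literally and not merely up to a further translation. Everything else is a direct assembly of Theorem~\ref{thm:dualtiling}, Proposition~\ref{cor:jmatrices}, Remark~\ref{rem:tildeshift}, Lemma~\ref{lem:dualj}(b), and the bijectivity statement of Proposition~\ref{prop:linearizationdata}.
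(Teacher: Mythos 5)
Your proposal is correct and follows essentially the same route as the paper: both arguments combine Lemma~\ref{lem:dualj} (the transition-matrix statement in part (a) and the central-block statement in part (b)) with Remark~\ref{rem:tildeshift} to see that iterating the dual shifts both the $J$ matrices and the central $k\times k$ block by $k-2$, and then invoke the fact that a tame tiling is determined by this data (Proposition~\ref{prop:linearizationdata}). Your version merely makes explicit some steps the paper leaves implicit, such as routing through Theorem~\ref{thm:dualtiling} and noting that left multiplication by $SL_k(\Z)$ does not alter the $J$ matrices.
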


    \begin{proof}
    Let $\M = \Phi(\gamma, \delta)$.  By Lemma~\ref{lem:dualj}(a) applying the dual operator twice to a tiling, amounts to applying the tilde operator twice to the paths $\gamma, \delta$.  By Remark~\ref{rem:tildeshift} applying the tilde operator twice shifts the $J$ matrices of paths by $k-2$.
 Similarly, by Lemma~\ref{lem:dualj}(b) applying the dual operator twice shifts the central matrix down and to the right by $k-2$. Therefore, since the $J$ matrices of paths are also shifted by $k-2$, we conclude that $(\M^*)^*$ equals $\M$ up to a shift in indices by $k-2$. 
    \end{proof}

	\section{Positivity}\label{sec:positivity}
	
	In this section we study sufficient conditions for a frieze to have positive entries.  In the case $k=2$, 
	Short proved several results about positive $SL_2$-tilings and friezes and their relations to paths using the geometry of the Farey graph \cite[Theorem 1.4]{s22}.
	Lacking a connection to geometry, we cannot use the same methodology.  Instead, in the case $k=3$ we relate positive friezes with $n$-periodic paths $\gamma$ that have alternating entries.  On the other hand, for small values of $k$ and $n$, namely when the cluster algebra $\mathcal{A}(k,n)$ on the associated Grassmannian $\text{Gr}(k,n)$ is of finite type,  we focus on studying positivity for friezes using Pl\"ucker coordinates.  In this case, we relate positive friezes with sequences of $J$ matrices that have alternating sign in their final column.

	\subsection{Paths with $k=3$}
	
Here we obtain a result about paths $\gamma$ which correspond to positive tilings from friezes when $k=3$.  We begin with the following definitions. 
	
	\begin{definition}
	We say that the vector $\gamma_i=(x_i, y_i, z_i)^T$ of a path $\gamma\in\P_3$ {\it alternates in sign} if $x_i, z_i$ are positive and $y_i$ is negative.  Similarly, we say that a path $\gamma$ with $(\gamma_1,\gamma_2,\gamma_3)=I_3$ alternates in sign if $\gamma_i$ of $\gamma$ alternates in sign for every $i\in\mathbb{Z}\setminus\{1,2,3\}$. 
	\end{definition}
	
	\begin{definition}
	We say that an (infinite) frieze $F$ is \emph{totally positive}, if the tiling $\M_F$ resulting from the frieze has positive entries except for the diagonals of zeros and the skew factor $(-1)^{k-1}$. That is, if $k$ is odd then the entries of $\M_F$ are all positive apart from the diagonals of zeros, and if $k$ is even then the entries of $\M_F$ are nonzero and have the same sign in between the diagonals of zeros.   
	\end{definition}

	Now, we examine two specific vectors of $\gamma$.
	
	\begin{lemma}\label{lem:alternatingbase}
		Let $\gamma\in\P_3$ be a path with $(\gamma_1,\gamma_2,\gamma_3)=I_3$.
		If $\M:=\Phi_\iota(\gamma)$ is a tiling from a totally positive frieze, then $\gamma_4$ and $\gamma_0$ alternate in sign.
			\end{lemma}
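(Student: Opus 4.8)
The plan is to read the signs of the coordinates of $\gamma_0$ and $\gamma_4$ straight off a handful of entries of the tiling $\M := \Phi_\iota(\gamma)$, since these entries turn out to be $\pm$ a single coordinate of $\gamma_0$ or of $\gamma_4$.

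First I would fix the indexing of $\M$ as in the proof of Theorem~\ref{thm:friezes}: because $(\gamma_1,\gamma_2,\gamma_3)=I_3$, the entry $m_{1\,1}$ is the upper-left corner of the block of $k-1=2$ falling diagonals of zeros, so $m_{i\,j}=0$ exactly when $j\in\{i,i+1\}$, while $m_{i\,i+2}=1$ and $m_{i\,i-1}=(-1)^{k-1}=1$ (the latter two being forced already by $\gamma\in\P_3$). For $k=3$, the hypothesis that $\M$ comes from a totally positive frieze then says every other entry is strictly positive; equivalently, $m_{i\,j}>0$ whenever $j\notin\{i-1,i,i+1,i+2\}$. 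Writing $\gamma_0=(x_0,y_0,z_0)^T$ and $\gamma_4=(x_4,y_4,z_4)^T$ and recalling $\gamma_1=e_1,\gamma_2=e_2,\gamma_3=e_3$, every entry $m_{i\,j}=\det(\gamma_i,\gamma_{i+1},\gamma_j)$ whose indices are small enough to involve only $\gamma_0,\dots,\gamma_4$ is a $3\times3$ determinant with two standard-basis columns, and a one-line cofactor expansion along the remaining column expresses it as $\pm$ a coordinate of $\gamma_0$ or $\gamma_4$.

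The computation then comes down to choosing the right six index pairs. For $\gamma_4$: the pair $(2,4)$ lies on the diagonal of ones, and $m_{2\,4}=\det(e_2,e_3,\gamma_4)=x_4$, so $x_4=1>0$; the pair $(1,4)$ is in the positive region, and $m_{1\,4}=\det(e_1,e_2,\gamma_4)=z_4$, so $z_4>0$; the pair $(3,1)$ is in the positive region, and $m_{3\,1}=\det(\gamma_3,\gamma_4,\gamma_1)=\det(e_3,\gamma_4,e_1)=-y_4$, so $y_4<0$. Hence $\gamma_4$ alternates in sign. Symmetrically for $\gamma_0$: $(0,2)$ lies on the diagonal of ones and $m_{0\,2}=\det(\gamma_0,e_1,e_2)=z_0$, so $z_0=1>0$; $(2,0)$ is in the positive region and $m_{2\,0}=\det(e_2,e_3,\gamma_0)=x_0$, so $x_0>0$; $(0,3)$ is in the positive region and $m_{0\,3}=\det(\gamma_0,e_1,e_3)=-y_0$, so $y_0<0$. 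Hence $\gamma_0$ alternates in sign, completing the proof.

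The only point that needs care, and the closest thing to an obstacle here, is pinning down the indexing so that the chosen pairs land where claimed: that $(0,2)$ and $(2,4)$ sit on the diagonal of ones, whereas $(0,3)$, $(2,0)$, $(1,4)$, and $(3,1)$ avoid all four special diagonals $j\in\{i-1,i,i+1,i+2\}$ and hence lie in the positive region. This is a finite bookkeeping check against the shape of $\M$ recorded in the proof of Theorem~\ref{thm:friezes}; the determinant evaluations afterward are immediate expansions along the column that is a standard basis vector.
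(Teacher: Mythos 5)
Your proposal is correct and follows essentially the same route as the paper: both read off the coordinates of $\gamma_0$ and $\gamma_4$ from the four tiling entries $m_{1\,4}$, $m_{3\,1}$, $m_{2\,0}$, $m_{0\,3}$ (each a determinant with two standard-basis columns) and invoke positivity. The only cosmetic difference is that the paper obtains $x_4=1$ and $z_0=1$ directly from the $SL_3$ condition on $(\gamma_2,\gamma_3,\gamma_4)$ and $(\gamma_0,\gamma_1,\gamma_2)$ rather than from the entries $m_{2\,4}$ and $m_{0\,2}$ on the diagonal of ones, which is the same fact.
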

	
	\begin{proof}
		Since $(\gamma_0,\gamma_1,\gamma_2),(\gamma_2,\gamma_3,\gamma_4)\in SL_3(\Z)$, $\gamma_4$ and $\gamma_0$ are of the form
		\[
		\gamma_4=\begin{pmatrix}
		1\\
		a\\
		b
		\end{pmatrix},\quad\quad \gamma_0=\begin{pmatrix}
		c\\
		d\\
		1
		\end{pmatrix}.
		\]
		By definition of $\Phi_{\iota}$, we have the following entries of $\M$ which lie in the first nontrivial row:
		\[
		m_{1\,4}=\det\begin{pmatrix}
		1&0&1\\
		0&1&a\\
		0&0&b
		\end{pmatrix}=b,\quad\quad m_{3\,1}=\det\begin{pmatrix}
		0&1&1\\
		0&a&0\\
		1&b&0
		\end{pmatrix}=-a
		\]
		and
		\[
		m_{2\,0}=\det\begin{pmatrix}
		0&0&c\\
		1&0&d\\
		0&1&1
		\end{pmatrix}=c,\quad\quad m_{0\,3}=\det\begin{pmatrix}
		c&1&0\\
		d&0&0\\
		1&0&1
		\end{pmatrix}=-d.
		\]
		Since $\M$ is positive, so are $b,c,-a,$ and $-d$.
	\end{proof}
	
	Below, we examine arbitrary entries of $\gamma$.
	
	\begin{lemma}\label{lem:alternatinginduct}
		Let $\gamma\in\P_3$ be a path with $(\gamma_1,\gamma_2,\gamma_3)=I_3$ and entries $\gamma_i=(x_i,y_i,z_i)^T$.
		If $\M:=\Phi_\iota(\gamma)$ is a tiling from a totally positive infinite frieze, then the following hold for $i\in\mathbb{Z}\setminus \{0,1,2,3,4\}$.
		\begin{enumerate}
			\item $x_i>0$,
			\item $z_i>0$,
			\item $y_i<0$ if and only if $y_{i+1}<0$.
		\end{enumerate}

        Moreover, if $\gamma$ is $m$-periodic, the same holds for $\gamma_i$ where $i$ is not congruent to $0,1,2,3,4$ mod $m$.
	\end{lemma}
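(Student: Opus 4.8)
The plan is to read the coordinates $x_i, y_i, z_i$ off directly as entries of $\M$ and then extract the sign statements from total positivity. Since $k=3$ is odd, ``totally positive'' means every entry of $\M$ is strictly positive apart from the two falling diagonals of zeros; and because $m_{i\,j}=\det(\gamma_i,\gamma_{i+1},\gamma_j)$, the two zero entries in row $i$ sit in columns $j=i$ and $j=i+1$.

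For parts (1) and (2) I would expand the determinant defining $m_{i\,j}$ along a standard basis column. Using $(\gamma_1,\gamma_2,\gamma_3)=I_3$ this gives $m_{1\,i}=\det(e_1,e_2,\gamma_i)=z_i$ and $m_{2\,i}=\det(e_2,e_3,\gamma_i)=x_i$. The entry $m_{1\,i}$ lies off the zero diagonal of row $1$ for $i\notin\{1,2\}$, and $m_{2\,i}$ off the zero diagonal of row $2$ for $i\notin\{2,3\}$, so total positivity yields $z_i>0$ and $x_i>0$ for all $i\notin\{0,1,2,3,4\}$.

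For part (3) I would compute two more families of entries the same way, now anchoring the last determinant column at $\gamma_1=e_1$ and at $\gamma_3=e_3$:
\[
m_{i\,1}=\det(\gamma_i,\gamma_{i+1},e_1)=y_iz_{i+1}-y_{i+1}z_i, \qquad m_{i\,3}=\det(\gamma_i,\gamma_{i+1},e_3)=x_iy_{i+1}-x_{i+1}y_i.
\]
Column $1$ misses the zero diagonal of row $i$ for $i\notin\{0,1\}$ and column $3$ for $i\notin\{2,3\}$, so both quantities are positive on the relevant range. Combining with the positivity of $x_i,x_{i+1},z_i,z_{i+1}$ from parts (1) and (2): if $y_i<0$ then $y_iz_{i+1}<0$, so $m_{i\,1}>0$ forces $y_{i+1}z_i<0$ and hence $y_{i+1}<0$; conversely, if $y_{i+1}<0$ then $x_iy_{i+1}<0$, so $m_{i\,3}>0$ forces $x_{i+1}y_i<0$ and hence $y_i<0$. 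This proves the equivalence for $i\notin\{0,1,2,3,4\}$. (Together with Lemma~\ref{lem:alternatingbase}, which gives $y_0<0$ and $y_4<0$, the equivalence then propagates to show that $\gamma$ alternates in sign, though that is used only later.)

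For the ``moreover'', $m$-periodicity of $\gamma$ makes $\M=\Phi_\iota(\gamma)$ an $(m\times m)$-periodic tiling by Corollary~\ref{blockperiodic}, so the zero diagonals now occupy the columns congruent to $i$ and $i+1$ modulo $m$; the four determinant identities above are unchanged, and the sign chase goes through verbatim as long as $i\not\equiv 0,1,2,3,4\pmod m$. I do not expect a real obstacle here: the argument is just a matter of picking the right entries — those in columns $1$ and $3$, which are pinned to the basis vectors $\gamma_1$ and $\gamma_3$ — and chasing signs. The one point that needs care is the bookkeeping of exceptional indices, namely ensuring that the column used avoids the zero diagonal of the row in question and that the coordinates appearing in each inequality are already known to be positive; this is exactly what necessitates excluding the small indices $0,1,2,3,4$.
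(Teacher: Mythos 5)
Your proposal is correct and follows essentially the same route as the paper: parts (1) and (2) come from the entries $m_{1\,i}=z_i$ and $m_{2\,i}=x_i$, and part (3) from the identities $m_{i\,1}=y_iz_{i+1}-y_{i+1}z_i$ and $m_{i\,3}=x_iy_{i+1}-x_{i+1}y_i$ together with the positivity of the $x$'s and $z$'s, exactly as in the paper (which merely writes the same sign chase as an inequality of ratios). Your explicit bookkeeping of which columns avoid the zero diagonals, and the remark on the periodic case, are consistent with the paper's (more terse) treatment.
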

	
	\begin{proof}
		As in the proof of Lemma \ref{lem:alternatingbase}, the positivity of $m_{1\,i}$ and $m_{2\,i}$ give the positivity of $x_i$ and $z_i$ respectively.
		Furthermore, we have that
		\[
		m_{i\,3}=\det\begin{pmatrix}
		x_i&x_{i+1}&0\\
		y_i&y_{i+1}&0\\
		z_i&z_{i+1}&1
		\end{pmatrix}=x_iy_{i+1}-x_{i+1}y_i,\]
        \[
        m_{i\,1}=\begin{pmatrix}
		x_i&x_{i+1}&1\\
		y_i&y_{i+1}&0\\
		z_i&z_{i+1}&0
		\end{pmatrix}=y_iz_{i+1}-y_{i+1}z_i.
		\]
		By the positivity of $\M$ as well as $x_i$ and $z_i$, we have the following.
		If $y_i<0$, then $m_{i\,1}$ gives us that
		\[
		\frac{z_{i+1}}{z_i}<\frac{y_{i+1}}{y_i},
		\]
		so $y_{i+1}<0$.
		If $y_{i+1}< 0$, then $m_{i\,3}$ gives us that
		\[
		\frac{x_i}{x_{i+1}}<\frac{y_i}{y_{i+1}},
		\]
		so $y_{i}<0$.
	\end{proof}
	
	This allows us to make a general statement about the path $\gamma$.
	
	\begin{theorem}\label{thm:alternatingpath}
		Let $\gamma\in\P_3$ be a path with $(\gamma_1,\gamma_2,\gamma_3)=I_3$.
		If $\M:=\Phi_\iota(\gamma)$ is a tiling from a totally positive infinite frieze, then $\gamma$ alternates in sign.
        If $\M=\Phi_\iota(\gamma)$ is a tiling from a positive finite frieze with period $m$, then $\gamma_i$ alternate in sign for $i\not\equiv 1,2,3\mod{m}$. 
	\end{theorem}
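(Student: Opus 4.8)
The statement is essentially an assembly of Lemmas~\ref{lem:alternatingbase} and~\ref{lem:alternatinginduct}: the first provides base cases, the second propagates them. Write $\gamma_i=(x_i,y_i,z_i)^T$. By Lemma~\ref{lem:alternatinginduct}(1),(2) we have $x_i>0$ and $z_i>0$ for every $i\in\Z\setminus\{0,1,2,3,4\}$, while Lemma~\ref{lem:alternatingbase} supplies the same for $\gamma_0=(c,d,1)^T$ and $\gamma_4=(1,a,b)^T$, where $c>0$ and $b>0$. Hence, to prove that $\gamma$ alternates, it remains only to verify $y_i<0$ for all $i\in\Z\setminus\{1,2,3\}$, and we already know $y_0=d<0$ and $y_4=a<0$.

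For the infinite case I would treat the two directions separately. Going backward, Lemma~\ref{lem:alternatinginduct}(3) applied at $i=-1,-2,\dots$ gives $y_i<0\Leftrightarrow y_{i+1}<0$, and chaining from $y_0<0$ forces $y_i<0$ for all $i\le 0$. Going forward, the point to be careful about is that Lemma~\ref{lem:alternatinginduct}(3) is \emph{not} stated for $i=4$, so it does not directly connect the known sign of $y_4$ to $y_5$. I would bridge this gap by rerunning the computation of Lemma~\ref{lem:alternatinginduct} at $i=4$: the entry $m_{4\,1}=y_4z_5-y_5z_4$ is positive, being an entry of a totally positive tiling that lies off the zero diagonals, and combining this with $y_4<0$, $z_4=b>0$ and $z_5>0$ forces $y_5z_4<y_4z_5<0$, whence $y_5<0$. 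Lemma~\ref{lem:alternatinginduct}(3) at $i=5,6,\dots$ then propagates $y_i<0$ to all $i\ge5$. Altogether $\gamma_i$ alternates in sign for all $i\in\Z\setminus\{1,2,3\}$, which is precisely the assertion that $\gamma$ alternates in sign.

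For the finite case, note first that when $k=3$ we have $(-1)^{k-1}=1$, so the tiling $\M_F$ attached to a positive finite frieze of period $m$ is simply the $m$-periodic repetition of the (positive) rotated frieze and is therefore totally positive; moreover, by Corollary~\ref{cor:finiteperiodicfrieze} the associated path is skew $m$-periodic, hence $m$-periodic since $(-1)^{k-1}=1$. Thus Lemmas~\ref{lem:alternatingbase} and~\ref{lem:alternatinginduct}, including its ``moreover'' clause, both apply. As before $x_i,z_i>0$ for $i\not\equiv 0,1,2,3,4\pmod m$, and $\gamma_0,\gamma_4$ alternate. For the sign of $y_i$ with $i\not\equiv 0,1,2,3,4\pmod m$, observe that the residues $5,6,\dots,m-1$ form a single block on which Lemma~\ref{lem:alternatinginduct}(3) applies, and at $i\equiv m-1$ it connects $y_{m-1}$ to $y_m=y_0<0$; propagating along the block gives $y_i<0$ for all such $i$. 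Combined with $\gamma_0,\gamma_4$ this yields alternation for all $i\not\equiv1,2,3\pmod m$; the small cases $m=5$ (where only $\gamma_0,\gamma_4$ are in play) and $m=6$ (where the block is the single residue $5$) are covered by the same argument.

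The only genuine subtlety is the asymmetry just described: Lemma~\ref{lem:alternatinginduct}(3) links $y_i$ with $y_{i+1}$, so the ``island'' $\gamma_0$ attaches to the backward chain via $i=-1$, but the ``island'' $\gamma_4$ cannot be attached to the forward chain $5,6,\dots$ from within the stated range, which is why the explicit $m_{4\,1}$ computation (equivalently, observing that the proof of Lemma~\ref{lem:alternatinginduct} runs verbatim at $i=4$ once $x_4=1>0$ and $z_4=b>0$ are known) is needed. Everything else is routine bookkeeping.
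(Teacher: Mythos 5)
Your proof is correct and follows the same route as the paper: Lemma~\ref{lem:alternatingbase} supplies the base cases $\gamma_0,\gamma_4$ and Lemma~\ref{lem:alternatinginduct} propagates them by induction in both directions. Your explicit bridge at $i=4$ (via positivity of $m_{4\,1}=y_4z_5-y_5z_4$) correctly patches a boundary case that the paper's two-line proof silently elides---Lemma~\ref{lem:alternatinginduct}(3) is stated only for $i\notin\{0,1,2,3,4\}$, so the forward chain starting at $\gamma_4$ needs exactly this extra step---and your periodic bookkeeping in the finite case, including the reduction of skew periodicity to periodicity for $k=3$, is likewise sound.
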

	
	\begin{proof}
		We induct on $i$ from above starting with $i=4$ and below starting with $i=0$.
		Lemma \ref{lem:alternatingbase} gives the base cases, and Lemma \ref{lem:alternatinginduct} gives the inductive step.
	\end{proof}
	
	The converse of Theorem~\ref{thm:alternatingpath} is not generally true.
	Consider the following example of a path which alternates in sign
   \[
    \gamma=
	\left(
	\cdots,\begin{pmatrix}
	1\\
	0\\
	0
	\end{pmatrix},
	\begin{pmatrix}
	0\\
	1\\
	0
	\end{pmatrix},
	\begin{pmatrix}
	0\\
	0\\
	1
	\end{pmatrix},\left(
	\begin{array}{r}
	1\\
	-2\\
	1
	\end{array}\right),\left(
	\begin{array}{r}
	1\\
	-1\\
	1
	\end{array}\right),
          \left(
	\begin{array}{r}
	1\\
	-3\\
	2
	\end{array}\right),
    \left(
	\begin{array}{r}
	1\\
	-2\\
	1
	\end{array}\right),
	\begin{pmatrix}
	1\\
	0\\
	0
	\end{pmatrix},
	\begin{pmatrix}
	0\\
	1\\
	0
	\end{pmatrix},
	\begin{pmatrix}
	0\\
	0\\
	1
	\end{pmatrix}
	\cdots
	\right).
   \]
	This gives the following tiling:
	\[
    \Phi_\iota(\gamma)=
	\begin{array}{rrrrrrrrrrrrrr}
	0&0&1&1&1&2&1&0&0\\
	&0&0&1&1&1&1&1&0&0\\
	&&0&0&1&\textcolor{red}{-1}&0&2&1&0&0\\
	&&&0&0&1&0&-1&0&1&0&0\\
	&&&&0&0&1&1&-1&2&1&0&0\\
	&&&&&0&0&1&1&1&0&1&0&0,
	\end{array}
	\]
	which is clearly not a tiling from a positive frieze.
    Note the entry $m_{3\,5}=-1$ is given by
    \[
    -1=\left|
    \begin{array}{rrr}
         0&1&1\\
         0&-2&-3\\
         1&1&1 
    \end{array}
    \right|.
    \]

	\subsection{$J$ matrices of $SL_k$-friezes}
	In this section we study positivity for friezes by treating them as Pl\"ucker friezes evaluated at certain elements of the Grassmannian.   We begin by defining a new class of Pl\"ucker coordinates which correspond to entries of $J$ matrices and will play an important role in our discussion.
	
	\begin{definition}\label{def:semiconsecutive}
		A Pl\"ucker coordinate of the form $p_{o([i]^{k+1}\setminus\{j\})}$ where $i\in[n]$ and $j\in[i+1, \dots, i+k-1]$ i.e. a Pl\"ucker coordinate which consists of two consecutive runs separated by a gap of size one, is called \textit{semi-consecutive}.
	\end{definition}

	Note that by definition, consecutive Pl\"ucker coordinates are not semi-consecutive.
	Moreover, we have the following corollary to Proposition \ref{prop:jentries}.

	\begin{corollary}\label{cor:jentriessemi}
		The entries of $J$ matrices of tilings $\M_{\mathcal{F}_{(k,n)}}$ from Pl\"ucker friezes are semi-consecutive Pl\"ucker coordinates.  
        In particular, every semi-consecutive Pl\"ucker coordinate appears as an entry in some $J$ matrix of $\M_{\mathcal{F}_{(k,n)}}$.	
	\end{corollary}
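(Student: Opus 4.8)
The plan is to read the entries of the transition matrices of $\M_{\mathcal{F}_{(k,n)}}$ directly off Proposition~\ref{prop:jentries} and then recognize the resulting Pl\"ucker coordinates combinatorially. By Definition~\ref{def:horizontalandverticaltransitions} the $J$ matrices of a tiling are precisely its horizontal and vertical transition matrices $H_p$ and $V_p$, and the top entry of the last column of such a matrix is the scalar $(-1)^{k-1}$; so the content is to show that the remaining entries $j_{p\,q+1}$, for $q\in\{1,\dots,k-1\}$, are semi-consecutive Pl\"ucker coordinates up to sign, and conversely that every semi-consecutive coordinate arises this way.

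First I would treat the horizontal transition matrices. Proposition~\ref{prop:jentries}(a) gives $j_{p\,q+1}=(-1)^{k-q-1}p_{o([p]^q,[p+q+1]^{k-q})}$, so the point is the index-set identity $[p]^q\cup[p+q+1]^{k-q}=[p]^{k+1}\setminus\{p+q\}$, valid when indices are read cyclically modulo $n$: the two runs $[p]^q$ and $[p+q+1]^{k-q}$ together exhaust the $(k+1)$-element block $[p]^{k+1}$ except for the single index $p+q$. Since $q\in\{1,\dots,k-1\}$, the omitted index $p+q$ lies in the cyclic interval $[p+1,p+k-1]$, so by Definition~\ref{def:semiconsecutive} the coordinate $p_{o([p]^{k+1}\setminus\{p+q\})}$ is semi-consecutive. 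The vertical case is parallel: from Proposition~\ref{prop:jentries}(b) one has $j_{p\,q+1}=\pm\,p_{o([p+q-1]^{k-q},[p+k]^q)}$, and the analogous computation gives $[p+q-1]^{k-q}\cup[p+k]^q=[p+q-1]^{k+1}\setminus\{p+k-1\}$, with the omitted index $p+k-1$ again interior to the relevant length-$(k+1)$ cyclic block exactly when $q\in\{1,\dots,k-1\}$. This establishes the first assertion.

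For the converse I would invert the same computation. An arbitrary semi-consecutive Pl\"ucker coordinate has the form $p_{o([i]^{k+1}\setminus\{j\})}$ with $i\in[n]$ and $j$ in the cyclic interval $[i+1,i+k-1]$; writing $j=i+q$ with $q\in\{1,\dots,k-1\}$, the identity above gives $[i]^{k+1}\setminus\{j\}=[i]^q\cup[i+q+1]^{k-q}$. Hence Proposition~\ref{prop:jentries}(a) applied with $p=i$ shows that $p_{o([i]^{k+1}\setminus\{j\})}$ coincides, up to the sign $(-1)^{k-q-1}$, with the entry $j_{i\,q+1}$ of the horizontal transition matrix $H_i$ of $\M_{\mathcal{F}_{(k,n)}}$, which is one of its $J$ matrices. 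As $i$ and $q$ range over the allowed values we obtain every semi-consecutive Pl\"ucker coordinate.

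The proof is essentially bookkeeping, and the one place that needs care is keeping the cyclic intervals straight: checking that the two consecutive runs produced by Proposition~\ref{prop:jentries} sit at the two ends of a length-$(k+1)$ cyclic block with exactly one interior entry missing, and that this missing entry lands in the open cyclic interval demanded by Definition~\ref{def:semiconsecutive} precisely for $q\in\{1,\dots,k-1\}$. The sign factor $(-1)^{k-q-1}$ is immaterial, amounting only to a reordering of the index tuple, so ``appears as an entry'' is to be read up to sign.
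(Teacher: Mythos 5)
Your proposal is correct and is exactly the argument the paper intends: the corollary is stated without a written proof as an immediate consequence of Proposition~\ref{prop:jentries}, and your index bookkeeping (identifying $[p]^q\cup[p+q+1]^{k-q}$ with $[p]^{k+1}\setminus\{p+q\}$ and inverting it for surjectivity) simply makes that deduction explicit. Your remark that the sign $(-1)^{k-q-1}$ amounts to a reordering of the index tuple matches the paper's own convention, as seen in the sign normalization built into the quiddity vectors of Definition~\ref{def:quiddity}.
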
\label{cor:semi}

	Before continuing, we make several observations about Pl\"ucker coordinates for small values of $k$ and $n-k$. 
	
	\begin{remark}\label{rem:smalln}
		We consider two cases.
		\begin{enumerate}
			\item[(a)] If $k\geq n-3$, then all almost consecutive Pl\"ucker coordinates $p_I$ are either semi-consecutive or consecutive.
			With only three entries to exclude from $[n]$ to form $I$, there must either be a gap of one between sequences or all entries will be consecutive.

			\item[(b)] If $k\leq 3$, then all semi-consecutive Pl\"ucker coordinates $p_I$ are almost consecutive.
			With only three entries in $I$, at least one consecutive run will be of length one.
		\end{enumerate}
	\end{remark}


    Next, we want to further study the relationship between Pl\"ucker friezes and entries of their $J$ matrices.
   For ease of notation in the following results, we define quiddity vectors coming from the last column of $J$ matrices.
	
	\begin{definition}\label{def:quiddity}
		For an $SL_k$-frieze $F$, we call the vectors \[\textbf{q}_i=\begin{pmatrix}
			(-1)^{k-2}j_{i\,2}\\
			\vdots\\
			(-1)^{p}j_{i\,k-p}\\
			\vdots\\
			(-1)^0j_{i\,k}
		\end{pmatrix},\] 
		where $j_{i\,\ell}$ are the elements from the final column of the $i$-th horizontal transition matrix of $\M_F$, the \textit{quiddity vectors}.
		The sequence $(\textbf{q}_i)_{i\in\Z}$ is called the \textit{quiddity sequence} of $F$ or, equivalently, $\M_F$.
		We say a quiddity sequence is \textit{positive} if all of its entries are positive.
	\end{definition}

	This extends the traditional notion of the quiddity sequence in the case where $k=2$ to higher dimensions.
	Note that by Corollary~\ref{cor:semi}, the entries in these vectors correspond exactly to the semi-consecutive Pl\"ucker coordinates.

	\begin{lemma}\label{lem:n=k+3}
		Let $F$ be a frieze of type $(k,n)$ with $n-3\leq k$.
		Then $F$ is positive if the quiddity sequence is positive.
	\end{lemma}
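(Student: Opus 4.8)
The plan is to rephrase the statement in terms of the Pl\"ucker coordinates of a matrix $A$ representing $F$ and then to apply the classification of almost consecutive coordinates in Remark~\ref{rem:smalln}(a).

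First I would write $F=\F_{(k,n)}(A)$ for a $k\times n$ integer matrix $A$ whose consecutive Pl\"ucker coordinates are all equal to $1$. By the definition of the Pl\"ucker frieze $\F_{(k,n)}$, every entry of $F$ lying strictly between the two rows of ones is a value $p_{o(I)}(A)$ with $p_{o(I)}$ almost consecutive and not consecutive, and conversely each such Pl\"ucker coordinate occurs. Since $n-3\le k$, Remark~\ref{rem:smalln}(a) guarantees that every almost consecutive $p_{o(I)}$ is consecutive or semi-consecutive; in the consecutive case $p_{o(I)}(A)=1>0$. Thus the lemma reduces to showing that $p_{o(I)}(A)>0$ for every semi-consecutive Pl\"ucker coordinate $p_{o(I)}$.

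Next I would identify semi-consecutive coordinates with the quiddity vector entries. By Proposition~\ref{prop:jentries}(a) (with $q=\ell-1$), for $2\le\ell\le k$ the entry $j_{i\,\ell}$ in the last column of the horizontal transition matrix $H_i$ of $\M_F$ equals $(-1)^{k-\ell}\,p_{o([i]^{\ell-1},[i+\ell]^{k-\ell+1})}(A)$, so the corresponding quiddity vector entry $(-1)^{k-\ell}j_{i\,\ell}$ of Definition~\ref{def:quiddity} equals $p_{o([i]^{\ell-1},[i+\ell]^{k-\ell+1})}(A)$ on the nose. Writing the gap position $j=i+\ell-1$, the index set $[i]^{\ell-1}\cup[i+\ell]^{k-\ell+1}$ is exactly $[i]^{k+1}\setminus\{j\}$ with $j\in[i+1,i+k-1]$, so as $i$ runs over $\Z$ and $\ell$ over $\{2,\dots,k\}$ these entries realize every semi-consecutive Pl\"ucker coordinate; this is the content of Corollary~\ref{cor:jentriessemi}. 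Hence the positivity of the quiddity sequence of $F$ is precisely the statement that $p_{o(I)}(A)>0$ for every semi-consecutive $p_{o(I)}$, which is what the previous paragraph left to prove. Combining the two reductions, every entry of $F$ between the rows of ones equals some $p_{o(I)}(A)$ that is either $1$ (consecutive case) or a positive semi-consecutive value, so $F$ is positive.

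The only delicate point I anticipate is routine sign bookkeeping: one must verify that the sign $(-1)^{k-\ell}$ built into the quiddity vectors in Definition~\ref{def:quiddity} cancels the sign $(-1)^{k-q-1}$ in Proposition~\ref{prop:jentries}(a), and that the families $[i]^{\ell-1}\cup[i+\ell]^{k-\ell+1}$ genuinely exhaust the semi-consecutive patterns of Definition~\ref{def:semiconsecutive}. There is no real obstacle, because the essential combinatorial input --- that for $n-k\le 3$ an almost consecutive coordinate must be consecutive or semi-consecutive --- is already supplied by Remark~\ref{rem:smalln}(a).
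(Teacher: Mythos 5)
Your proposal is correct and follows essentially the same route as the paper's proof: realize $F$ as $\F_{(k,n)}(A)$, observe that the entries of $F$ are almost consecutive Pl\"ucker coordinates which by Remark~\ref{rem:smalln}(a) are consecutive or semi-consecutive, and identify the semi-consecutive ones with the quiddity entries. The explicit sign check that $(-1)^{k-\ell}j_{i\,\ell}=p_{o([i]^{\ell-1},[i+\ell]^{k-\ell+1})}(A)$ is a nice verification of what the paper takes for granted via Corollary~\ref{cor:jentriessemi}, but it does not change the argument.
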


	\begin{proof}
		We can realize the frieze ${F}$ as $F={\mathcal{F}_{(k,n)}(A)}$, that is the frieze obtained from a Pl\"ucker frieze evaluated at a matrix $A$.  Since the quiddity sequence is positive, this means all semi-consecutive Pl\"ucker coordinates of $A$ are positive.
		By part (a) of Remark \ref{rem:smalln}, all almost consecutive Pl\"ucker coordinates of $A$ in this case are also semi-consecutive.
		Since all entries of ${F}$ are given by almost consecutive Pl\"ucker coordinates of $A$, then all of them are also positive.
	\end{proof}
	
	\begin{lemma}\label{lem:k=2,3}
		Let $F$ be a frieze of type $(k,n)$ with $k\leq3$.
		Then the quiddity sequence is positive if $F$ is positive. 
	\end{lemma}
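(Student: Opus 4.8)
The plan is to realize $F$ as a Pl\"ucker frieze evaluated at a matrix and then show that, when $k\le 3$, every coordinate of every quiddity vector is literally one of the non-trivial (hence positive) entries of $F$ itself. Since $F$ has type $(k,n)$, write $F=\F_{(k,n)}(A)$ for an integer $k\times n$ matrix $A$ whose consecutive $k\times k$ minors are all $1$; note that $w\ge 1$ by the definition of a frieze, so $n\ge k+2$. By Corollary~\ref{cor:jentriessemi} the last-column entries of the horizontal transition matrices of $\M_F$ are, up to sign, semi-consecutive Pl\"ucker coordinates of $A$, and a short sign check makes the signs disappear at the level of quiddity vectors: with $j_{i\,q+1}=(-1)^{k-q-1}p_{o([i]^q,[i+q+1]^{k-q})}(A)$ from Proposition~\ref{prop:jentries}(a), and with $\textbf{q}_i$ multiplying $j_{i\,k-p}$ by $(-1)^p$ where $p=k-q-1$ (Definition~\ref{def:quiddity}), the product is $+\,p_{o([i]^q,[i+q+1]^{k-q})}(A)$. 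Hence each coordinate of each $\textbf{q}_i$ equals $p_I(A)$ for a semi-consecutive index set $I$. I would carry out this sign bookkeeping first, since it is the only place a genuine computation enters.

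Now suppose $k\le 3$. By Remark~\ref{rem:smalln}(b) every semi-consecutive Pl\"ucker coordinate is almost consecutive, and by the observation following Definition~\ref{def:semiconsecutive} it is never consecutive. So each index set $I$ occurring above has the form $[r]^{k-1}\cup\{m\}$ with $m\notin[r]^{k-1}$ modulo $n$ and with $[r]^{k-1}\cup\{m\}$ not a block of $k$ cyclically consecutive integers. Inspecting the definition of $\F_{(k,n)}$, the coordinate $p_{o(I)}$ occurs as the entry in some position $(r,m)$ of the array; because $I$ has distinct entries this position is not on a diagonal of zeros, and because $I$ is not consecutive it is not on a bounding row of ones, so it lies among the $w$ rows of genuine frieze values. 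Positivity of $F$ therefore gives $p_I(A)>0$, and since this holds for every quiddity coordinate, the quiddity sequence of $F$ is positive.

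The only delicate points are the two identifications just used. One must track the signs carefully through Proposition~\ref{prop:jentries}(a) and Definition~\ref{def:quiddity} to confirm that the quiddity coordinates are the Pl\"ucker coordinates themselves rather than their negatives; and one must confirm that a non-consecutive almost consecutive Pl\"ucker coordinate genuinely occurs among the $w$ non-trivial rows of $\F_{(k,n)}$ and not merely among the rows of $1$'s or the diagonals of $0$'s (here the hypothesis $w\ge1$, equivalently $n\ge k+2$, is what guarantees a semi-consecutive set cannot reduce mod $n$ to a consecutive one). Both facts are immediate from the explicit descriptions of the $J$-matrix entries and of the Pl\"ucker frieze, and everything after that follows directly from the positivity hypothesis on $F$.
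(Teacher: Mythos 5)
Your proof is correct and follows essentially the same route as the paper: realize $F$ as $\F_{(k,n)}(A)$, identify the quiddity coordinates with semi-consecutive Pl\"ucker coordinates, and invoke Remark~\ref{rem:smalln}(b) to see that for $k\leq 3$ these are almost consecutive and hence appear as (positive) entries of $F$. The paper's proof is just a terser version of this; your additional sign bookkeeping and the check that these coordinates land in the nontrivial rows (using $n\geq k+2$) are details the paper leaves implicit.
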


	\begin{proof}
		We can realize the frieze ${F}$ as $F={\mathcal{F}_{(k,n)}(A)}$, that is the frieze obtained from a Pl\"ucker frieze evaluated at a matrix $A$.   By part (b) of Remark \ref{rem:smalln}, we know that the entries of the quiddity vectors are almost consecutive Pl\"ucker coordinates of $A$, hence they are entries of $F$.
		Since all entries of $F$ are positive, then so is the quiddity sequence.
	\end{proof}
	
	\subsection{The Cases of $(4,7)$ and $(5,8)$}
	
	For some special pairs of $(k,n)$, we can make additional arguments.
	It should be noted that, while the representation theory of algebras that provide categorification of the cluster structure on $\mathcal{A}(k,n)$ does not feature directly in either of these arguments, it did serve as inspiration for the proofs.
	
	In what follows, we will always implicitly realize a frieze $F$ of type $(k,n)$ as ${F}={\mathcal{F}_{(k,n)}(A)}$, that is the frieze obtained from a Pl\"ucker frieze evaluated at some integer matrix $A$.  Recall that the consecutive Pl\"ucker coordinates of $A$ equal 1. Furthermore, to simplify notation, whenever we talk about the sign of a Pl\"ucker coordinate $p_I$, we always mean the sign of $p_I(A)$ for a fixed matrix $A$.  We also recall that Pl\"ucker coordinates satisfy Pl\"ucker relations given in \eqref{eqn:pluckerrel}.

	We begin with the case where $k=4$ and $n=7$.
	
	\begin{lemma}\label{lem:k=4}
		Let $F$ be a positive frieze of type $(4, n)$ where $n\leq 7$.
		Then the quiddity sequence is positive.
	\end{lemma}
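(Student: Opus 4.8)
The plan is to realize $F$ as an evaluation of a Pl\"ucker frieze and then show that every entry of every quiddity vector is a positive Pl\"ucker coordinate of the underlying matrix. Write $F=\mathcal{F}_{(4,n)}(A)$ with $A$ as in Theorem~\ref{thm:pluckerfrieze}, so that all consecutive Pl\"ucker coordinates of $A$ equal $1$ and, since $F$ is positive, all almost consecutive Pl\"ucker coordinates of $A$ are positive. By Corollary~\ref{cor:jentriessemi} the entries of the quiddity vectors of $F$ are exactly the numbers $p_I(A)$ with $I$ semi-consecutive, so it suffices to prove $p_I(A)>0$ for every semi-consecutive $I$.

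For $k=4$ a semi-consecutive index set has the form $[i]^{5}\setminus\{j\}$ with $j\in\{i+1,i+2,i+3\}$. When $j=i+1$ or $j=i+3$ this set contains a run of three consecutive indices, hence (for any $n$) it is almost consecutive in the sense of Definition~\ref{consecutive}, so $p_I(A)$ occurs among the entries of $F$ and is positive. The only remaining case is $I=\{i,i+1,i+3,i+4\}\pmod n$, which is \emph{not} almost consecutive; by the cyclic symmetry of $\mathcal{F}_{(4,n)}$ it is enough to treat a single such $I$, and when $n=5$ even this set is cyclically consecutive, so the genuinely new work is confined to $n\in\{6,7\}$.

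To finish one uses the Pl\"ucker relations~\eqref{eqn:pluckerrel}, organized by the fact that $\mathcal{A}(4,n)$ is a cluster algebra of finite type for $n\le 7$ (type $A_3$ when $n=6$, type $E_6$ when $n=7$). The goal is to rewrite $p_I(A)$ as a subtraction-free rational expression in consecutive and almost consecutive Pl\"ucker coordinates of $A$; since the former equal $1$ and the latter are positive, $p_I(A)>0$ then follows, and hence so does positivity of the whole quiddity sequence. The main obstacle — and the reason the computation must be done separately for $n=6$ and $n=7$ — is that a single three-term Pl\"ucker relation involving $p_I$ typically expresses it as a \emph{difference} $p_{I'}p_{I''}-p_{I'''}$ of products of positive entries of $F$ rather than as a sum, so positivity is not manifest from one relation. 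One therefore has to chain together a short sequence of such relations, or equivalently identify the correct exchange relation relative to a seed of $\mathcal{A}(4,n)$ all of whose cluster variables already appear among the positive entries of $F$, so that the cancellations are built in and the result is subtraction-free. As the paper indicates, it is the categorification of the cluster structure on $\mathcal{A}(4,n)$ that tells one which relations to choose in each of the two cases.
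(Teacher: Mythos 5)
Your reduction is correct and matches the paper's: realize $F=\mathcal{F}_{(4,n)}(A)$, note that positivity of $F$ gives positivity of all almost consecutive Pl\"ucker coordinates of $A$, observe that among the semi-consecutive sets $[i]^5\setminus\{j\}$ only $j=i+2$ (i.e.\ $I=\{m,m+1,m+3,m+4\}$) fails to be almost consecutive, and that this only happens for $n\in\{6,7\}$. But from that point on your proposal is a description of a strategy rather than a proof: you say one should ``chain together a short sequence of such relations'' or ``identify the correct exchange relation,'' without exhibiting any relation or carrying out the chain. That is precisely where the content of the lemma lies, so as written there is a genuine gap. The paper closes it with two explicit three-term Pl\"ucker relations: first, taking $I=\{m,m+1,m+3\}$ and $J=\{m+1,\dots,m+5\}$ gives
\[
0=p_{o(m+1\,m+3\,m+4\,m+5)}-p_{o(m\,m+1\,m+3\,m+4)}\,p_{o(m+1\,m+2\,m+3\,m+5)}+p_{o(m\,m+1\,m+3\,m+5)},
\]
which reduces the problem to showing $p_{o(m\,m+1\,m+3\,m+5)}\ge 0$; for $n=6$ this coordinate is already almost consecutive, and for $n=7$ a second relation yields $p_{o(m\,m+1\,m+3\,m+5)}=p_{o(m\,m+1\,m+3\,m+6)}\,p_{o(m\,m+1\,m+2\,m+5)}-1$, a product of two almost consecutive coordinates minus one.

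Note also that your proposed mechanism --- rewriting $p_I(A)$ as a \emph{subtraction-free} expression in entries of $F$ --- is not what actually happens, and it is doubtful it could be made to work verbatim. In the $n=7$ step the paper's expression is manifestly \emph{not} subtraction-free: the conclusion $p_{o(m\,m+1\,m+3\,m+5)}\ge 0$ uses that the two almost consecutive factors are positive \emph{integers}, hence their product is at least $1$. If the entries of $F$ were merely positive reals the argument (and plausibly the statement) would fail, so integrality is an essential ingredient that a purely subtraction-free/cluster-positivity argument would miss. You should either produce the explicit relations and the integrality step, or justify the existence of a cluster of $\mathcal{A}(4,n)$ consisting entirely of consecutive and almost consecutive Pl\"ucker coordinates (which you assert implicitly but do not establish); without one of these the proof is incomplete.
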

	
	\begin{proof}
		Given that almost consecutive Pl\"ucker coordinates are positive, it suffices to show that the semi-consecutive Pl\"ucker coordinates are positive.
		It is only when $n\geq6$ that Pl\"ucker coordinates of the form $p_{o(m\,m+1\,m+3\,m+4)}$ are semi-consecutive, but not almost consecutive.
		We show that these are positive.
		Consider the Pl\"ucker relation with $I=\{m,m+1,m+3\}$ and $J =\{m+1,m+2,m+3,m+4,m+5\}$.
		This gives the equation
		\begin{multline*}
			0=p_{o(m\,m+1\,m+3)\,m+1}p_{o(m+2\,m+3\,m+4\,m+5)}-p_{o(m\,m+1\,m+3)\,m+2}p_{o(m+1\,m+3\,m+4\,m+5)}\\
			+p_{o(m\,m+1\,m+3)\,m+3}p_{o(m+1\,m+2\,m+4\,m+5)}-p_{o(m\,m+1\,m+3)\,m+4}p_{o(m+1\,m+2\,m+3\,m+5)}\\
			+p_{o(m\,m+1\,m+3)\,m+5}p_{o(m+1\,m+2\,m+3\,m+4)}.
		\end{multline*}
		Recall that Pl\"ucker coordinates with repeated entries are $0$ and consecutive Pl\"ucker coordinates are $1$.  
		Thus, we may simplify the above equation as follows.
		\begin{multline*}
		0=p_{o(m+1\,m+3\,m+4\,m+5)}-p_{o(m\,m+1\,m+3\,m+4)}p_{o(m+1\,m+2\,m+3\,m+5)}+p_{o(m\,m+1\,m+3\,m+5)}.
		\end{multline*}
		By assumption, $p_{o(m+1\,m+3\,m+4\,m+5)}$ and $p_{o(m+1\,m+2\,m+3\,m+5)}$ are positive since they are almost consecutive.
		Thus, 
		\[
		p_{o(m\,m+1\,m+3\,m+4)}>0\quad\text{if}\quad p_{o(m\,m+1\,m+3\,m+5)}\geq0.
		\]
		In the case of $n=6$, $m+5\equiv m-1\pmod 6$, so the latter is almost consecutive, hence positive.
		In the case of $n=7$, note that $m+6\equiv m-1\pmod 7$ and consider the Pl\"ucker relation with $I=\{m,m+1,m+3\}$ and $J =\{m,m+1,m+2,m+5,m+6\}$.
		Simplified as above, the resulting equation is
		\[
		0=-1-p_{o(m\,m+1\,m+3\,m+5)}+p_{o(m\,m+1\,m+3\,m+6)}p_{o(m\,m+1\,m+2\,m+5)}.
		\]
		Note that the final term is the product of almost consecutive Pl\"ucker coordinates, hence it is at least $1$.
		Therefore, $p_{o(m\,m+1\,m+3\,m+5)}\geq0$, as desired.
	\end{proof}

	For the case where $k=5$ and $n=8$, we first make claims regarding the case $k=3$ and $n=8$.
	
	\begin{remark}\label{rem:38to58}
		Note that $\Gr(5,8)\cong \Gr(3,8)$, where the isomorphism on their coordinate rings is given by $p_I\mapsto p_{I^c}$ where $I^c=[8]\setminus I$.
	\end{remark}

	Hence, for ease of notation we consider $\Gr(3,8)$.
    For the following, we assume that consecutive Pl\"ucker coordinates $p_{o(m\,m+1\,m+2)}=1$ and all Pl\"ucker coordinates are integers.

	\begin{lemma}\label{lem:135geq0}
		Consider $\Gr(3,8)$.  Suppose that all semi-consecutive Pl\"ucker coordinates are positive.
		Then Pl\"ucker coordinates of the form $p_{o(m\,m+2\,m+4)}\geq0$ for all $m$.
		Furthermore, for a fixed $m$, $p_{o(m\,m+2\,m+4)}=0$ if and only if $p_{o(m\,m+2\,m+3)}=p_{o(m+1\,m+2\,m+4)}=1$.
	\end{lemma}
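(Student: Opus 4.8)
The plan is to derive both statements from a single Plücker relation, using the positivity of semi-consecutive coordinates together with the fact that consecutive coordinates equal $1$ and coordinates with a repeated index vanish. Concretely, I would apply the Plücker relation \eqref{eqn:pluckerrel} in $\Gr(3,8)$ with $I=\{m,m+2\}$ and $J=\{m+1,m+2,m+3,m+4,m+5\}$ (indices mod $8$). Expanding the sum $\sum_{\ell=0}^{3}(-1)^\ell p_{o(I)j_\ell}\,p_{o(J\setminus j_\ell)}$ and discarding the two terms that contain a repeated index (those with $j_\ell=m$ or $j_\ell=m+2$ appearing twice in the three-element set $o(I)j_\ell$), one is left with an identity of the form
\[
p_{o(m\,m+2\,m+4)}\cdot 1 \;=\; p_{o(m\,m+2\,m+3)}\,p_{o(m+1\,m+2\,m+4)} \;-\; p_{o(m+1\,m+2\,m+3)}\,p_{o(m\,m+2\,m+5)} \;+\; \cdots,
\]
where each surviving coefficient on the right is either a consecutive coordinate (equal to $1$) or a semi-consecutive coordinate (positive by hypothesis). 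The exact bookkeeping of which $J\setminus j_\ell$ are consecutive versus semi-consecutive is the routine part; I would choose $I$ and $J$ precisely so that $p_{o(m\,m+2\,m+4)}$ appears isolated with coefficient a consecutive (hence unit) Plücker coordinate, and so that every other term is a nonnegative product.

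Once the relation is arranged as $p_{o(m\,m+2\,m+4)} = (\text{sum of nonnegative terms}) - (\text{sum of nonnegative terms})$, I would need one more Plücker relation to control the negative part — or, better, select the initial $I,J$ so that there is no negative part at all, i.e.\ so that $p_{o(m\,m+2\,m+4)}$ is literally a sum of products of positive and unit coordinates. If a negative term is unavoidable with a single relation, the fallback is to bound it: express the offending coordinate (which will itself be of the form $p_{o(m\,m+2\,m+4)}$ shifted, or of the form $p_{o(\cdot)}$ with a larger gap) via a second, shorter Plücker relation whose terms are all semi-consecutive or consecutive, showing that term is at most the product of the positive terms it is subtracted from. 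This is the same strategy used in the proof of Lemma~\ref{lem:k=4}, where one first shows an inequality modulo a sign condition on an auxiliary coordinate and then clears that condition with a second relation; I expect the present lemma to follow the same two-relation pattern.

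For the equality case, I would trace back through the relation that realizes $p_{o(m\,m+2\,m+4)}$ as a sum of nonnegative terms: it is zero exactly when every term vanishes. Given that each term is a product of a positive semi-consecutive coordinate and another coordinate, and given that consecutive coordinates are $1$, vanishing forces the non-unit factors in each product to be zero, and a short case analysis identifies this with the condition $p_{o(m\,m+2\,m+3)}=p_{o(m+1\,m+2\,m+4)}=1$ as stated — the "only if" direction — while the "if" direction is a direct substitution of these two values (together with the vanishing of any term containing a repeated index) back into the relation.

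The main obstacle I anticipate is purely combinatorial: choosing the pair $(I,J)$ in the eight-element cyclic setting so that, after reduction mod $8$, \emph{all} surviving coordinates are either consecutive or semi-consecutive (no coordinate with a gap of two or more, such as $p_{o(m\,m+2\,m+5)}$, sneaking in with the wrong sign). Since $n=8$ is small, the cyclic wraparound $m+5\equiv m-3$, $m+4\equiv m-4$, etc., creates several coincidences that must be checked by hand; if a single relation cannot avoid a bad term, the genuine work is in picking the auxiliary relation for the second step. Everything after the right choice of relations is mechanical.
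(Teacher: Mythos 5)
Your opening move is the same as the paper's: apply the Pl\"ucker relation \eqref{eqn:pluckerrel} with $I=\{m,m+2\}$ and a window of consecutive indices, then exploit that consecutive coordinates are $1$ and repeated-index coordinates vanish. But the execution as sketched has a genuine gap. First, your $J=\{m+1,\dots,m+5\}$ has five elements, which does not fit \eqref{eqn:pluckerrel} for $k=3$ (where $|J|=k+1=4$); this is what produces the spurious term $p_{o(m\,m+2\,m+5)}$ and the ``$+\cdots$'' you then have to fight. The correct choice is $J=\{m+1,m+2,m+3,m+4\}$, after which the relation collapses with no leftover terms to
\[
p_{o(m\,m+2\,m+4)}=-1+p_{o(m\,m+2\,m+3)}\,p_{o(m+1\,m+2\,m+4)},
\]
and no second relation is needed.

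Second, and more importantly, your target form --- ``$p_{o(m\,m+2\,m+4)}$ is literally a sum of products of positive and unit coordinates,'' with the equality case handled by ``every term vanishes'' --- is not achievable and is not how the lemma actually closes. The identity above has an irreducible $-1$; nonnegativity holds only because $p_{o(m\,m+2\,m+3)}$ and $p_{o(m+1\,m+2\,m+4)}$ are positive \emph{integers}, so their product is at least $1$. (For positive real values the statement would be false.) Likewise the equality case is the statement that a product of two positive integers equals $1$, forcing both factors to be $1$ --- not that each summand vanishes. Your proposal never isolates this integrality step, and your fallback plan (bounding a negative term via a second Pl\"ucker relation, as in Lemma~\ref{lem:k=4}) addresses a problem that does not arise here while missing the one that does. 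With the corrected $J$ and the integrality observation, the argument is exactly the paper's.
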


	\begin{proof}
		Consider the Pl\"ucker relation where $I=\{m,m+2\}$ and $J=\{m+1,m+2,m+3,m+4\}$.
		This gives us the following equation:
		\begin{multline*}
		0=p_{o(m\,m+2)\,m+1}p_{o(m+2\,m+3\,m+4)}-p_{o(m\,m+2)\,m+2}p_{o(m+1\,m+3\,m+4)}\\
		+p_{o(m\,m+2)\,m+3}p_{o(m+1\,m+2\,m+4)}-
        p_{o(m\,m+2)\,m+4}p_{o(m+1\,m+2\,m+3)}.
		\end{multline*}
		Recall that consecutive Pl\"ucker coordinates are $1$ and that Pl\"ucker coordinates with repeated indices are $0$.
		We may therefore simplify the equation as
		\[
		p_{o(m\,m+2\,m+4)}=-1+p_{o(m\,m+2\,m+3)}p_{o(m+1\,m+2\,m+4)}.
		\]
		By assumption, the second term on the right side is a positive integer, therefore $p_{o(m\,m+2\,m+4)}\geq0$.
		Observe, additionally, that $p_{o(m\,m+2\,m+4)}=0$ if and only if the second term on the right side is $1$.  
		Since both Pl\"ucker coordinates in the product are positive integers, this occurs if and only if both are $1$.
	\end{proof}

	The same assumption also provides an additional result about a crucial orbit of Pl\"ucker coordinates.
	
	\begin{proposition}\label{prop:1314loop}
		Consider $\Gr(3,8)$.
        Suppose that all semi-consecutive Pl\"ucker coordinates are positive.
        \begin{itemize}
         \item[(a)] If for some fixed $q$, either $p_{o(q\,q+1\,q+5)}\geq0$ or $p_{o(q\,q+1\,q+4)}\geq0$, then $p_{o(m\,m+1\,m+5)}\geq0$ and $p_{o(m\,m+1\,m+4)}\geq0$ for all $m$.
         \item[(b)] If for some fixed $q$, either $p_{o(q\,q+1\,q+5)}=0$ or $p_{o(q\,q+1\,q+4)}=0$, then $p_{o(m\,m+1\,m+5)}=0$ and $p_{o(m\,m+1\,m+4)}=0$ for all $m$.
		\end{itemize}
	\end{proposition}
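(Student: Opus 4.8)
The plan is to prove both parts at once by tying together all sixteen Plücker coordinates in the two orbits $a_m := p_{o(m\,m+1\,m+4)}$ and $b_m := p_{o(m\,m+1\,m+5)}$, $m\in\mathbb{Z}/8$, into a single cyclic system of Plücker relations, so that sign (resp.\ vanishing) information at one coordinate propagates to all of them. As in the preceding lemmas, all Plücker coordinates below are evaluated at the integer matrix representing the frieze, so that every consecutive coordinate equals $1$, every semi-consecutive coordinate is a positive integer (the standing hypothesis), and, by Lemma~\ref{lem:135geq0}, every coordinate of the form $p_{o(m\,m+2\,m+4)}$ is a nonnegative integer.

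First I would extract from the Plücker relations \eqref{eqn:pluckerrel}, taking $I=\{m,m+1\}$ and $J=\{m+1,m+2,m+4,m+5\}$ and simplifying using that consecutive coordinates are $1$ and that repeated indices give $0$, the identity
\[
a_m\,a_{m+1}=p_{o(m+1\,m+4\,m+5)}+p_{o(m+1\,m+2\,m+4)}\cdot b_m,
\]
in which $p_{o(m+1\,m+2\,m+4)}$ is semi-consecutive and $p_{o(m+1\,m+4\,m+5)}$ is, after reducing mod $8$, the orbit member $b_{m+4}$; together with the analogous identities obtained from other choices of $I$ and $J$ (and from their images under the reflection $j\mapsto -j$ of $\mathbb{Z}/8$, which preserves the Plücker relations and interchanges the two orbits), connecting $a_m$ to $a_{m+4}$, $b_m$ to $b_{m+4}$, and a given $a$ to an adjacent $b$. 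Each such identity, once simplified, has the form $X\cdot P=Q+Y\cdot R$ (or $X\cdot P=Q$), where $P$ and $R$ are products of consecutive and semi-consecutive coordinates — hence positive integers — and $Q$ is assembled from coordinates already known to be nonnegative, including in some cases the $p_{o(m\,m+2\,m+4)}$ controlled by Lemma~\ref{lem:135geq0}, plus at most one further orbit coordinate. Reading these identities as transfer rules ``$X\ge 0\Rightarrow Y\ge 0$'' and following them around the cycle proves part (a): starting from the single hypothesis at $q$, one deduces in turn that $a_m\ge 0$ and $b_m\ge 0$ for every $m$.

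For part (b), since ``$=0$'' implies ``$\ge 0$'', part (a) already gives that every $a_m$ and $b_m$ is a nonnegative integer; feeding the value $0$ into the same identities then forces the neighbouring orbit coordinates to vanish, because the factors $P,R$ are $\ge 1$ and the summands of $Q$ are $\ge 0$, and chasing this around the cycle shows that all of them are $0$. The main obstacle is organizing the cyclic chain so as to eliminate the ``uncontrolled'' coordinates $p_{o(m\,m+2\,m+5)}$ — which appear naturally in the Plücker relations linking the $a$-orbit to the $b$-orbit and about which we have no sign information — either by pairing two relations so that such a coordinate cancels, or by splitting off the sub-case in which some $p_{o(m\,m+2\,m+4)}$ vanishes, which by the equality clause of Lemma~\ref{lem:135geq0} forces several consecutive-type equalities and collapses the system. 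Tracking the signs produced when reordering indices inside $p_{o(\cdot)}$ throughout is routine but must be carried out carefully in each identity.
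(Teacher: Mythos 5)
Your overall strategy --- propagating sign and vanishing information around a cyclic chain of Pl\"ucker relations linking the two orbits $a_m=p_{o(m\,m+1\,m+4)}$ and $b_m=p_{o(m\,m+1\,m+5)}$ --- is exactly the paper's strategy, and your observation that the reflection $j\mapsto -j$ swaps the two orbits is a nice symmetry the paper does not exploit. But there is a genuine gap: the one relation you actually compute does not have the structure your propagation scheme requires. With $I=\{m,m+1\}$ and $J=\{m+1,m+2,m+4,m+5\}$ you correctly obtain
\[
a_m\cdot a_{m+1}=b_{m+4}+p_{o(m+1\,m+2\,m+4)}\cdot b_m,
\]
but here the left side is a product of \emph{two} orbit members of unknown sign, and the ``constant'' term $b_{m+4}$ is itself an unknown orbit member rather than something ``already known to be nonnegative.'' This does not fit your template $X\cdot P=Q+Y\cdot R$ with $P,R$ known positive and $Q$ known nonnegative, so no transfer rule $X\ge0\Rightarrow Y\ge0$ can be read off from it. The entire difficulty of the proposition is locating relations that \emph{do} have that template, and you have not exhibited any; you defer to ``analogous identities'' and to ad hoc fixes (cancellation of the uncontrolled $p_{o(m\,m+2\,m+5)}$, case-splitting on $p_{o(m\,m+2\,m+4)}=0$) whose success is not demonstrated.

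For comparison, the paper's proof rests on two specific choices. Taking $I=\{m,m+1\}$, $J=\{m+2,m+3,m+4,m+5\}$ gives
\[
b_m=1-p_{o(m\,m+1\,m+3)}\,p_{o(m+2\,m+4\,m+5)}+a_m\,p_{o(m+2\,m+3\,m+5)},
\]
where all three auxiliary coordinates are semi-consecutive, hence positive integers; since the middle product is $\ge1$, this yields $b_m\ge0\Rightarrow a_m\ge0$. Taking $I=\{m,m+4\}$, $J=\{m,m+1,m+6,m+7\}$ gives
\[
a_m=-p_{o(m\,m+4\,m+6)}+p_{o(m\,m+4\,m+7)}\,p_{o(m\,m+1\,m+6)},
\]
where $p_{o(m\,m+4\,m+6)}\ge0$ by Lemma~\ref{lem:135geq0}, $p_{o(m\,m+1\,m+6)}$ is semi-consecutive, and $p_{o(m\,m+4\,m+7)}=b_{m-1}$; this yields $a_m\ge0\Rightarrow b_{m-1}\ge0$. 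Alternating these two steps walks around the cycle and proves (a); the analogous vanishing implications (using (a) to first secure nonnegativity) prove (b). Note that in these relations the uncontrolled coordinates $p_{o(m\,m+2\,m+5)}$ never appear, so the workarounds you propose are unnecessary once the right relations are chosen. To complete your proof you would need to replace your identity with relations of this kind.
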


	\begin{proof}
        We proceed to prove two claims from which we may derive part (a).
        \begin{description}
            \item [\textbf{Claim 1a}] If $p_{o(m\,m+1\,m+5)}\geq0$, then $p_{o(m\,m+1\,m+4)}\geq0$.
            \item [\textbf{Claim 2a}] If $p_{o(m\,m+1\,m+4)}\geq0$, then $p_{o(m\,m+4\,m+7)}\geq0$.
        \end{description}
        \[\]
		Consider the Pl\"ucker relation where $I=\{m,m+1\}$ and $J=\{m+2,m+3,m+4,m+5\}$.
		This gives us the following equation.
		\begin{multline*}
		0=p_{o(m\,m+1)\,m+2}p_{o(m+3\,m+4\,m+5)}-p_{o(m\,m+1)\,m+3}p_{o(m+2\,m+4\,m+5)}\\
		+p_{o(m\,m+1)\,m+4}p_{o(m+2\,m+3\,m+5)}-
        p_{o(m\,m+1)\,m+5}p_{o(m+2\,m+3\,m+4)}.
		\end{multline*}
		Recall that consecutive Pl\"ucker coordinates are $1$.
		We may therefore simplify the equation as
		\begin{equation}\label{eq:a}
		p_{o(m\,m+1\,m+5)}=1-p_{o(m\,m+1\,m+3)}p_{o(m+2\,m+4\,m+5)}+p_{o(m\,m+1\,m+4)}p_{o(m+2\,m+3\,m+5)}.
		\end{equation}
		By assumption, all terms other than $p_{o(m\,m+1\,m+5)}$ and $p_{o(m\,m+1\,m+4)}$ are positive.
		This shows Claim 1a.
        
		Additionally, consider the Pl\"ucker relation where $I=\{m,m+4\}$ and $J=\{m,m+1,m+6,m+7\}$.
		This gives us the following equation.
		\begin{multline*}
		0=p_{o(m\,m+4)\,m}p_{o(m+1\,m+6\,m+7)}-p_{o(m\,m+4)\,m+1}p_{o(m\,m+6\,m+7)}\\
		+p_{o(m\,m+4)\,m+6}p_{o(m\,m+1\,m+7)}-p_{o(m\,m+4)\,m+7}p_{o(m\,m+1\,m+6)}.
		\end{multline*}
		By recalling that we are in $\Gr(3,8)$, this simplifies as
		\begin{equation}\label{eq:b}
		p_{o(m\,m+1\,m+4)}=-p_{o(m\,m+4\,m+6)}+p_{o(m\,m+4\,m+7)}p_{o(m\,m+1\,m+6)}.
		\end{equation}
		Note that, $p_{o(m\,m+1\,m+6)}$ is semi-consecutive and hence positive by assumption.  On the other hand, $p_{o(m\,m+4\,m+6)}$ is of the form $p_{o(m\,m+2\,m+4)}$, which is non-negative by  Lemma \ref{lem:135geq0}.
		Thus, we showed Claim 2a.
  
		Note that $p_{o(m\,m+4\,m+7)}$ is of the form $p_{o(m\,m+1\,m+5)}$ where the indices are shifted by $+1$.
		Suppose that $p_{o(q\,q+1\,q+5)}\geq0$ for some $q$.  
		By Claim 1a, this implies $p_{o(q\,q+1\,q+4)}\geq0$.
		By Claim 2a, this implies $p_{o(q\,q+4,q+7)}\geq0$ which in turn implies $p_{o(q-1\,q\,q+3)}\geq0$ by Claim 1a.
		One may iterate this process, concluding that $p_{o(m\,m+1\,m+5)}\geq0$ and $p_{o(m\,m+1\,m+4)}\geq0$ for all $m$.
		Alternatively, by starting from the assumption that $p_{o(q\,q+1\,q+4)}\geq0$ for some $q$, Claim 2a implies $p_{o(q\,q+4,q+7)}\geq0$ which in turn implies $p_{o(q-1\,q\,q+3)}\geq0$ by Claim 1a, and the iteration proceeds similarly.  This completes the proof of part (a). 
		
		To prove part (b), it similarly requires to show the following two statements. 
				\begin{description}
            \item [\textbf{Claim 1b}] If $p_{o(m\,m+1\,m+5)}=0$, then $p_{o(m+1\,m+2\,m+5)}=0$.
            \item [\textbf{Claim 2b}] If $p_{o(m\,m+1\,m+4)}=0$, then $p_{o(m\,m+1\,m+5)}=0$.
        \end{description}
		
		First, suppose that $p_{o(m\,m+1\,m+5)}=0$ for some $m$.  Then $p_{o(m+1\,m+2\,m+5)}\geq 0$ by part (a) and $p_{o(m+1\,m+5\,m+7)}\geq 0$ by Lemma~\ref{lem:135geq0}.
 Now, the following equation, obtained from \eqref{eq:b} by shifting indices $+1$,  
		\[
		p_{o(m+1\,m+2\,m+5)}=-p_{o(m+1\,m+5\,m+7)}+p_{o(m\,m+1\,m+5)}p_{o(m+1\,m+2\,m+7)}
		\]
		implies that $p_{o(m+1\,m+2\,m+5)}=0$.   This shows Claim 1b.
					
		Now suppose that $p_{o(m\,m+1\,m+4)}=0$ for some $m$.  Then by part (a) we conclude that $p_{o(m\,m+1\,m+5)}\geq 0$.  Then equation \eqref{eq:a} implies that $p_{o(m\,m+1\,m+5)}= 0$, since $p_{o(m\,m+1\,m+3)}, p_{o(m+2\,m+4\,m+5)}$ are semi-consecutive and positive by assumption.  This shows Claim 2b. 
	\end{proof}

	For the following proposition, we examine Pl\"ucker coordinates of certain types in the case $\Gr(3,8)$.
	We will use this result later to make claims about the corresponding coordinates in the case $\Gr(5,8)$.
	
	\begin{proposition}\label{prop:237leq0}
		Consider $\Gr(3,8)$.
		Suppose that consecutive Pl\"ucker coordinates are $1$ and semi-consecutive Pl\"ucker coordinates are positive.  Consider a subset of Pl\"ucker coordinates
		\[S=\{p_{o(m\,m+1\,m+5)}, p_{o(m\,m+1\,m+4)} \mid m\in \mathbb{N}\}. \]
				If there exists a non-positive element in $S$, then all elements of $S$ are $0$ and all semi-consecutive Pl\"ucker coordinates are 1.
			\end{proposition}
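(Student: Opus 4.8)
The plan is to distinguish whether the given non-positive element $p$ of $S$ equals $0$ or is strictly negative. If $p=0$, then Proposition~\ref{prop:1314loop}(b) immediately gives that every element of $S$ is $0$. Substituting $p_{o(m\,m+1\,m+4)}=p_{o(m\,m+1\,m+5)}=0$ into \eqref{eq:a} then yields $p_{o(m\,m+1\,m+3)}\cdot p_{o(m+2\,m+4\,m+5)}=1$ for all $m$; since both factors are positive integers, each is $1$. The coordinates $p_{o(m\,m+1\,m+3)}$ and $p_{o(m+2\,m+4\,m+5)}$ exhaust all semi-consecutive Plücker coordinates of $\Gr(3,8)$ (the only semi-consecutive orbits being those of $p_{o(i\,i+1\,i+3)}$ and $p_{o(i\,i+2\,i+3)}$), so all semi-consecutive coordinates equal $1$, as claimed.

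It remains to rule out $p<0$. By the contrapositive of Proposition~\ref{prop:1314loop}(a), if one element of $S$ is negative then every element of $S$ is negative; write $p_{o(m\,m+1\,m+4)}=-v_m$ and $p_{o(m\,m+1\,m+5)}=-u_m$ with $u_m,v_m\ge 1$. In \eqref{eq:a} the coordinates $p_{o(m\,m+1\,m+3)}$, $p_{o(m+2\,m+4\,m+5)}$, $p_{o(m+2\,m+3\,m+5)}$ are semi-consecutive, hence positive integers, and rearranging the equation gives $u_m\ge v_m$. In \eqref{eq:b} the coordinate $p_{o(m\,m+4\,m+6)}$ lies in the orbit of $p_{o(i\,i+2\,i+4)}$, hence is $\ge 0$ by Lemma~\ref{lem:135geq0}, while $p_{o(m\,m+1\,m+6)}$ is semi-consecutive and $p_{o(m\,m+4\,m+7)}$ lies in $S$; these facts force $u_m\le v_{m+1}$. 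Hence $v_m\le u_m\le v_{m+1}$ for all $m$, and since the Plücker indices are read modulo $8$ the sequence $(v_m)$ is $8$-periodic, so it must be constant: $u_m=v_m=v\ge 1$ for all $m$.

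With $u_m=v_m=v$, equation \eqref{eq:a} becomes $v\big(p_{o(m+2\,m+3\,m+5)}-1\big)=1-p_{o(m\,m+1\,m+3)}\,p_{o(m+2\,m+4\,m+5)}$; the left-hand side is $\ge 0$ and the right-hand side is $\le 0$, so both vanish and all semi-consecutive coordinates equal $1$. Then Lemma~\ref{lem:135geq0} forces every coordinate in the orbit of $p_{o(m\,m+2\,m+4)}$ to be $0$, and the Plücker relation \eqref{eqn:pluckerrel} with $I=\{m,m+2\}$ and $J=\{m+3,m+4,m+5,m+6\}$, whose only surviving non-constant term involves a coordinate in the orbit of $p_{o(m\,m+2\,m+5)}$, forces every such coordinate to equal $-1$. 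Finally, the Plücker relation with $I=\{m,m+3\}$ and $J=\{m+1,m+4,m+5,m+6\}$, after substituting all of the above values together with $p_{o(m\,m+1\,m+4)}=p_{o(m\,m+1\,m+5)}=-v$, collapses to $v^2+v=0$, which is impossible for $v\ge 1$. This contradiction eliminates the case $p<0$ and finishes the proof.

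The step I expect to be most delicate is the last paragraph: one must track, for each Plücker coordinate appearing in the two chosen relations, which cyclic orbit it belongs to — consecutive, semi-consecutive, the orbit of $p_{o(m\,m+2\,m+4)}$, the orbit of $p_{o(m\,m+2\,m+5)}$, or $S$ — together with the sign picked up when the index tuple is sorted into increasing order; and one must choose the two auxiliary relations so that exactly the right coordinates survive. Everything else follows from the inequalities extracted from \eqref{eq:a}, \eqref{eq:b} and Lemma~\ref{lem:135geq0}.
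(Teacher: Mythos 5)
Your proof is correct, but it takes a genuinely different route from the paper's in the main case. Both arguments rest on the same infrastructure (Lemma~\ref{lem:135geq0} and Proposition~\ref{prop:1314loop}), and your treatment of the case where the non-positive element is $0$ is essentially the paper's endgame, done slightly more cleanly: Proposition~\ref{prop:1314loop}(b) kills all of $S$, and \eqref{eq:a} then forces both semi-consecutive orbits $\{i,i+1,i+3\}$ and $\{i,i+2,i+3\}$ to consist of $1$'s in one stroke, whereas the paper extracts this from its relation (i) together with the ``furthermore'' clause of Lemma~\ref{lem:135geq0}. The real divergence is the strictly negative case. The paper never confronts it: starting from $p_{2\,3\,7}\le 0$ (or $p_{3\,4\,7}\le 0$), two further three-term Pl\"ucker relations plus Proposition~\ref{prop:1314loop}(a) sandwich $p_{3\,4\,7}$ between $\le 0$ and $\ge 0$, so the element is forced to be exactly $0$ in a few lines. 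You instead use the contrapositive of Proposition~\ref{prop:1314loop}(a) to make every element of $S$ negative, extract the chain $v_m\le u_m\le v_{m+1}$ from \eqref{eq:a} and \eqref{eq:b}, use $8$-periodicity to force a common value $v\ge 1$, deduce from \eqref{eq:a} that all semi-consecutive coordinates are $1$, pin down the orbits of $p_{o(m\,m+2\,m+4)}$ (as $0$) and $p_{o(m\,m+2\,m+5)}$ (as $-1$), and finally obtain $v^2+v=0$ from the relation with $I=\{m,m+3\}$, $J=\{m+1,m+4,m+5,m+6\}$ --- I checked this last computation and it does collapse as claimed. What the paper's route buys is brevity and the avoidance of the sign bookkeeping you flag; what yours buys is a more structural picture of what a hypothetical negative solution would have to look like (everything constant equal to $-v$, all semi-consecutive coordinates equal to $1$) before it self-destructs. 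The only caveat is the one you already note: the wrap-around signs in $o(\cdot)$ for the two auxiliary relations need to be tracked for all residues of $m$ mod $8$, not just a generic window, though this is the same level of rigor the paper itself employs when it ``shifts indices.''
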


	\begin{proof}
	We begin by presenting a list of specific Pl\"ucker relations for $\Gr(3,8)$ which we will use throughout the proof. Each equation is presented with its sets $I$ and $J$.
	
	\begin{itemize}
			\item[(i)] If $I=\{3,7\}$ and $J=\{1,2,3,4\}$, then
			\[
			p_{3\,4\,7}=p_{2\,3\,7}p_{1\,3\,4}-p_{1\,3\,7}.
			\]
			
			\item[(ii)] If $I=\{3,7\}$ and $J=\{4,5,6,7\}$, then
			\[
			p_{3\,6\,7}p_{4\,5\,7}=p_{3\,5\,7}p_{4\,6\,7}-p_{3\,4\,7}.
			\]
			
			\item[(iii)] If $I=\{3,7\}$ and $J=\{2,3,4,5\}$, then
			\[
			p_{2\,3\,7}=p_{3\,4\,7}p_{2\,3\,5}-p_{3\,5\,7}.
			\]
\end{itemize}

		To prove the statement we may, without loss of generality, pick a particular Pl\"ucker coordinate of each form from the collection $R$ since all equations hold after a shift in indices.
		For the following, we refer to the list of relations (i)-(iii) from above. 
		
		We set $m=2$ and for the first case we suppose that $p_{2\,3\,7}\leq 0$.  
		Consider relation (i).
		By assumption, $p_{2\,3\,7}\leq 0$ and $p_{1\,3\,4}>0$.
		By Lemma \ref{lem:135geq0}, $p_{1\,3\,7}\geq0$.
		Thus, $p_{3\,4\,7}\leq0$. 
		Consider relation (ii).
		By assumption, $p_{4\,5\,7},p_{4\,6\,7}\geq0$.
		By Lemma \ref{lem:135geq0}, $p_{3\,5\,7}\geq0$.
		By the above, $p_{3\,4\,7}\leq0$.
		Thus, $p_{3\,6\,7}\geq0$.
		Note that $p_{3\,6\,7}$ is of the form $p_{o(n\,n+1\,n+5)}$, so we may apply Proposition \ref{prop:1314loop}(a).
        Thus, $p_{3\,4\,7}\geq 0$.
		Combining with above results, we conclude $p_{3\,4\,7}=0$ as desired.  By Proposition~\ref{prop:1314loop}(b) we conclude that all elements of $S$ are zero.
		
		We now consider the second case where $p_{3\,4\,7}\leq 0$.
		Consider relation (iii).
		By Lemma \ref{lem:135geq0}, $p_{3\,5\,7}\geq0$.
		Thus, $p_{2\,3\,7}\leq0$.
		As above, relation (ii) and Proposition \ref{prop:1314loop}(b) give us the same result that all elements of $S$ are zero.
		
		It remains to show that all semi-consecutive Pl\"ucker coordinates are 1. 
		Relation (i) gives $p_{1\,3\,7}=0$, hence $p_{1\,3\,8}=p_{1\,2\,7}=1$ by Lemma \ref{lem:135geq0}.  Similarly, by shifting the indices in relation (i) we conclude that all semi-consecutive Pl\"ucker coordinates are 1. 
			\end{proof}

	This leads us to a statement concerning the case of $\text{Gr}(5,8)$.
	
	\begin{proposition}\label{prop:k=5}
		Let $F$ be a positive frieze of type $(5,8)$.
		Then the quiddity sequence is positive with the exception of the frieze $F$ of all $1$'s. 
	\end{proposition}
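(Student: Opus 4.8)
The plan is to transport the problem to $\Gr(3,8)$ via the isomorphism $p_I\mapsto p_{I^c}$ of Remark~\ref{rem:38to58}, match up the relevant Pl\"ucker coordinates with the set $S$ of Proposition~\ref{prop:237leq0}, and then read off the conclusion from that proposition. Write $F=\F_{(5,8)}(A)$ for an integer matrix $A$ whose consecutive Pl\"ucker coordinates all equal $1$. Two observations get the bookkeeping started. First, the entries of $F$ are exactly the almost consecutive Pl\"ucker coordinates of $A$, and since $n-3=5=k$ for the pair $(5,8)$, Remark~\ref{rem:smalln}(a) shows that each of these is either consecutive (hence $1$) or semi-consecutive; so $F$ is positive if and only if every semi-consecutive Pl\"ucker coordinate of $\Gr(5,8)$ that happens to be almost consecutive is positive. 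Second, by Corollary~\ref{cor:jentriessemi} the quiddity sequence of $F$ records precisely \emph{all} semi-consecutive Pl\"ucker coordinates of $\Gr(5,8)$; hence it is positive if and only if, in addition, all semi-consecutive coordinates that are \emph{not} almost consecutive are positive.

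Next I would carry out the combinatorial matching under complementation. A semi-consecutive coordinate of $\Gr(5,8)$ has index $[i]^6\setminus\{j\}$ with $j\in\{i+1,i+2,i+3,i+4\}$; one checks directly that removing $i+1$ or $i+4$ yields an almost consecutive coordinate, while removing $i+2$ or $i+3$ does not. Taking complements modulo $8$ and relabeling, the coordinates in the first group are in bijection with the semi-consecutive coordinates of $\Gr(3,8)$ (the two families $p_{o(m\,m+1\,m+3)}$ and $p_{o(m\,m+2\,m+3)}$), while the coordinates in the second group are in bijection with $\{p_{o(m\,m+1\,m+4)},\,p_{o(m\,m+1\,m+5)}\mid m\}$, which is exactly the set $S$ of Proposition~\ref{prop:237leq0}. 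Combining with the previous paragraph: $F$ is positive if and only if all semi-consecutive coordinates of $\Gr(3,8)$ are positive, and the quiddity sequence of $F$ is positive if and only if, moreover, every element of $S$ is positive.

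Finally I would conclude. Suppose $F$ is positive but its quiddity sequence is not. By the dictionary above, all semi-consecutive Pl\"ucker coordinates of $\Gr(3,8)$ are positive, so the standing hypotheses of Proposition~\ref{prop:237leq0} are met, and some element of $S$ is non-positive. Proposition~\ref{prop:237leq0} then forces every element of $S$ to vanish and every semi-consecutive Pl\"ucker coordinate of $\Gr(3,8)$ to equal $1$. Translating back through complementation, every entry of $F$ is a Pl\"ucker coordinate that is either consecutive or semi-consecutive in $\Gr(3,8)$, hence equals $1$; that is, $F$ is the frieze of all $1$'s. (One also checks that for the all-$1$'s frieze the relations of Lemma~\ref{lem:135geq0} and Proposition~\ref{prop:1314loop} give vanishing $S$-coordinates, so the stated exception is genuine.)

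The main obstacle is the combinatorial identification in the second paragraph: one must verify not only that the two ``new'' families of semi-consecutive coordinates of $\Gr(5,8)$ map onto $S$ under $I\mapsto I^c$, but also that the remaining semi-consecutive coordinates of $\Gr(5,8)$ together with the consecutive ones — that is, the full list of entries of $F$ — correspond under complementation to exactly the consecutive and semi-consecutive coordinates of $\Gr(3,8)$. Only once this is pinned down is ``$F$ is positive'' literally the hypothesis needed to invoke Proposition~\ref{prop:237leq0}, after which nothing further is required.
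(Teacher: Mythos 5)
Your proposal is correct and follows essentially the same route as the paper: transport to $\Gr(3,8)$ via $p_I\mapsto p_{I^c}$, identify the set $S$ of Proposition~\ref{prop:237leq0} with the semi-consecutive but not almost consecutive coordinates of $\Gr(5,8)$ (and the semi-consecutive coordinates of $\Gr(3,8)$ with the almost consecutive, non-consecutive ones of $\Gr(5,8)$), and then invoke that proposition. The only difference is that you spell out the combinatorial matching under complementation explicitly, which the paper asserts without verification.
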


	\begin{proof}
		Recall from Remark \ref{rem:38to58} that there is an isomorphism between $\Gr(3,8)$ and $\Gr(5,8)$.
		Consider the collection $S$ from Proposition \ref{prop:237leq0}.
		Performing the complement operation $p_{I^c}$ on elements of $S$ yields all semi-consecutive Pl\"ucker coordinates for $\Gr(3,8)$ that are not almost consecutive.  Similarly, performing it on the semi-consecutive Pl\"ucker coordinates of $\Gr(3,8)$ yields all almost consecutive Pl\"ucker coordinates in $\Gr(5,8)$ that are not consecutive.  Hence, Proposition \ref{prop:237leq0} can be restated for $\Gr(5,8)$ as saying that if all almost consecutive Pl\"ucker coordinates are positive then all semi-consecutive Pl\"ucker coordinates are positive with the single exception of when all almost consecutive Pl\"ucker coordinates are 1. 
				
		Since $F$ is a positive frieze, then all almost consecutive Pl\"ucker coordinates in $\Gr(5,8)$ are positive. Hence by above all semi-consecutive entries in $\Gr(5,8)$ are positive with the single exception when every entry of $F$ is 1.  Since, semi-consecutive coordinates are precisely the entries of the quiddity sequence, this yields the desired result. 
			\end{proof}

	\subsection{Gale dual and positivity result}
	Using the notion of the Gale dual of an $SL_k$-frieze, we are able to extend correspondences between positive friezes and positive quiddity sequences to other small values of $k$ and $n$.	
	
	We begin by recalling the notion of the Gale dual following the work of Morier-Genoud, Ovsienko, Schwartz, and Tobachnikov \cite{mgost13}.

	\begin{definition}\label{def:galetransform}\cite[Definitions 4.1.3 and Proposition 5.2.1]{mgost13}
		Let $F$ be a tame frieze of type $(k,n)$ with entries $a_{ij}$ as in Definition~\ref{def:frieze}. 
        Let $\alpha_{ij}$ be the determinant of the $j\times j$ diamond in $F$ whose left corner is $a_{i\,i}$ for $j\leq k$.
		The \textit{Gale dual of $F$}, denoted $F^\G$, is the tame frieze of type $(n-k, n)$ with entries $\alpha_{ij}$.
	\end{definition}

	We make the key observation that the entries of the Gale dual of $F$ are semi-consecutive minors of Pl\"ucker coordinates, which are precisely the entries of the quiddity sequence for $F$.

	Additionally, we refer to the following result from Ovsienko who proves a connection between $3d$-dissections of polygons and positive quiddity sequences for friezes in the case $k=2$.  In particular, the following statement is a direct consequence of the main theorem of \cite{o18}.
		
	\begin{theorem}\label{thm:2,10}\cite{o18}
		Let $k=2$ and $n\leq 9$.
		Then the Gale dual restricts to a bijection on positive friezes.
	\end{theorem}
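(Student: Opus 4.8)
The plan is to reduce the statement to a property of quiddity sequences and then invoke Ovsienko's classification \cite{o18}. Recall from \cite{mgost13} that the Gale dual $F\mapsto F^\G$ is an involution, up to a shift in indices, on the set of tame friezes; in particular it is a bijection between tame friezes of type $(2,n)$ and tame friezes of type $(n-2,n)$. Hence it suffices to show that a tame frieze $F$ of type $(2,n)$ is positive if and only if $F^\G$ is positive; the restriction to a bijection between positive friezes of the two types then follows formally. The implication that $F$ positive forces $F^\G$ positive is immediate: by the observation preceding the theorem, the entries of $F^\G$ are the semi-consecutive Pl\"ucker coordinates recording the quiddity data of $F$, and for $k=2$ these are, up to a shift, the entries of the row of $F$ adjacent to the row of ones, which are positive by hypothesis.

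For the converse I would argue as follows. Assume $F^\G$ is positive. By the same observation this says exactly that the quiddity sequence of $F$ is positive. Now I would invoke the main theorem of \cite{o18}: positive quiddity sequences of $SL_2$-friezes of width $n-3$ are in bijection with $3d$-dissections of the $n$-gon, and among these the triangulations of the $n$-gon are precisely the dissections whose associated frieze is positive, namely the classical Conway--Coxeter friezes \cite{cc73}. The combinatorial input I would draw from \cite{o18} is that for $n\le 9$ \emph{every} $3d$-dissection of the $n$-gon is a triangulation. Consequently $F$ is a Conway--Coxeter frieze, hence positive, which proves the converse. Combining the two implications with the involutivity of $\G$ yields the desired bijection.

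The hard part, and the only place the hypothesis $n\le 9$ is used, is the combinatorial fact quoted from \cite{o18}: that $3d$-dissections of $n$-gons with $n\le 9$ are exactly triangulations, equivalently that their number equals the Catalan number $C_{n-2}$. This hypothesis cannot be weakened: $n=10$ is the first case admitting a $3d$-dissection that is not a triangulation, and the corresponding positive quiddity sequence produces a frieze $F$ with a zero entry, so that $F^\G$ is positive while $F$ is not, and the Gale dual is no longer a bijection on positive friezes. Everything else is bookkeeping already available to us: the identification of the entries of $F^\G$ with the quiddity data of $F$ recorded just before the theorem, the involutivity of the Gale dual \cite{mgost13}, and the Conway--Coxeter correspondence between triangulations of the $n$-gon and positive $SL_2$-friezes of width $n-3$.
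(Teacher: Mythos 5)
First, a point of comparison: the paper does not prove this statement at all --- it is quoted as ``a direct consequence of the main theorem of \cite{o18}'' with no argument supplied. So your reconstruction via Ovsienko's dissection correspondence is indeed the intended route, and your reduction is set up correctly: the entries of $F^\G$ are the quiddity entries of $F$, the implication ``$F$ positive $\Rightarrow F^\G$ positive'' is immediate, and the whole content is the converse ``positive quiddity $\Rightarrow$ positive frieze.''

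The gap is in the combinatorial input you quote from \cite{o18}. It is false that every $3d$-dissection of an $n$-gon with $n\leq 9$ is a triangulation: the undissected hexagon is already a $3d$-dissection at $n=6$, as is a hexagon plus a triangle at $n=7$, so your claim that $n=10$ is the first counterexample is wrong. What you actually need is that every $3d$-dissection whose quiddity satisfies the monodromy condition $M_n(c_1,\ldots,c_n)=-\mathrm{Id}$ (the condition for being the quiddity of a tame $n$-periodic frieze; Ovsienko's bijection covers solutions of $M_n=\pm\mathrm{Id}$) is a triangulation. By Ovsienko's sign formula a dissection into one hexagon and triangles has monodromy $+\mathrm{Id}$, so this repaired statement does hold for $n\leq 8$, and your argument goes through there. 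But it fails at $n=9$: the undissected $9$-gon has monodromy $M(1)^9=-\mathrm{Id}$ and positive quiddity $(1,\ldots,1)$, and the associated tame frieze of width $6$ has constant rows $1,0,-1,-1,0,1$, hence is not positive, while its Gale dual (the all-ones width-one frieze of type $(7,9)$) is positive. So the converse direction cannot be established for $n=9$ by your method or any other; the statement as printed appears to require the bound $n\leq 8$, or an explicit exclusion of this all-ones quiddity analogous to the exception the authors record in the $(5,8)$ case. Since the paper offers only the citation, this is a problem with the statement's range rather than merely with your proof, but your write-up does not detect it because the dissection fact it rests on is misquoted.
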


	We can now state a partial positivity result for $SL_k$- friezes.
	
	\begin{theorem}\label{thm:positivitybij}
		Let $F$ be a tame $SL_k$-frieze of type $(k,n)$ satisfying one of the following conditions:
		\begin{enumerate}
			\item $k=2$ and $n\leq9$,
			\item $k=3,6$ and $n\leq 8$,
			\item $k=4$ and $n\leq 7$,
			\item $k=5$ and $n\leq 7$,
			\item $k=5$ and $n=8$ with the exception of the frieze of all ones where the quiddity vectors are all $(
			1,
			0,
			0,
			0,
			1
			)^T$.
		\end{enumerate}
		Then $F$ is positive if and only if its quiddity sequence is positive.
	\end{theorem}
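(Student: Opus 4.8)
The plan is to assemble the statement from the ingredients already established in Section~\ref{sec:positivity}, using Gale duality as the organizing tool. The key preliminary observation, recorded just before Theorem~\ref{thm:2,10}, is that the entries of the Gale dual $F^\G$ of a frieze $F$ of type $(k,n)$ are exactly the semi-consecutive Pl\"ucker coordinates, which by Corollary~\ref{cor:semi} are precisely the entries of the quiddity sequence of $F$. Hence $F^\G$ is a positive frieze if and only if the quiddity sequence of $F$ is positive. Since the Gale dual is an involution \cite{mgost13}, applying the same observation to $G=F^\G$, which is of type $(n-k,n)$, shows that $F=(F^\G)^\G$ is positive if and only if the quiddity sequence of $F^\G$ is positive. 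Combining the two, the biconditional of the theorem for type $(k,n)$ is logically equivalent to the biconditional for type $(n-k,n)$; moreover the implication ``$F$ positive $\Rightarrow$ quiddity positive'' for one type translates into the reverse implication ``quiddity positive $\Rightarrow$ $F$ positive'' for the Gale-dual type. So it suffices to verify each of the two implications for one representative of each Gale-dual pair.

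For ``positive quiddity $\Rightarrow$ positive frieze,'' Lemma~\ref{lem:n=k+3} gives the result whenever $n-3\le k$; this covers $(3,n)$ for $n\le 6$, $(4,n)$ for $n\le 7$, $(5,n)$ for $n\le 8$, and $(6,n)$ for $n\le 9$. For ``positive frieze $\Rightarrow$ positive quiddity,'' Lemma~\ref{lem:k=2,3} covers $k\le 3$, Lemma~\ref{lem:k=4} covers $k=4$ with $n\le 7$, and Proposition~\ref{prop:k=5} covers $(5,8)$, with the all-ones frieze as the sole exception. The case $k=2$, $n\le 9$ is handled directly by Theorem~\ref{thm:2,10}: since the Gale dual restricts to a bijection on positive friezes of that type, $F$ is positive iff $F^\G$ is positive iff the quiddity sequence of $F$ is positive, giving both directions of case $(1)$ at once. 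By the reduction of the previous paragraph, Theorem~\ref{thm:2,10} also yields the full biconditional for the Gale-dual types $(n-2,n)$ with $n\le 9$, in particular for $(5,7)$ and $(6,8)$.

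It then remains to check each of the five cases. Case $(1)$ is Theorem~\ref{thm:2,10}. Case $(3)$, $k=4$ with $n\le 7$, follows from Lemma~\ref{lem:k=4} for ``$\Rightarrow$'' and Lemma~\ref{lem:n=k+3} for ``$\Leftarrow$,'' since $n-k\le 3$. Case $(5)$, $(5,8)$, follows from Proposition~\ref{prop:k=5} for ``$\Rightarrow$'' and Lemma~\ref{lem:n=k+3} for ``$\Leftarrow$''; the all-ones frieze is excepted precisely because its quiddity vectors are $(1,0,0,0,1)^T$, which are nonnegative but not positive, while the frieze itself is positive. In case $(4)$ the subcase $(5,7)$ is the Gale dual of $(2,7)$ and so is covered by case $(1)$, and the remaining subcases have width $0$ or satisfy $n-k\le 3$. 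In case $(2)$ the subcase $k=6$ with $n\le 8$ is the Gale dual of $k=2$ with $n\le 8$, hence covered by case $(1)$; the subcase $k=3$ has ``$\Rightarrow$'' from Lemma~\ref{lem:k=2,3}, and ``$\Leftarrow$'' from Lemma~\ref{lem:n=k+3} when $n\le 6$, from the biconditional for the Gale dual $(4,7)$ when $n=7$, and from the biconditional for the Gale dual $(5,8)$ when $n=8$, where one appeals as well to the analysis of $\mathrm{Gr}(3,8)$ in Proposition~\ref{prop:237leq0} and Proposition~\ref{prop:1314loop}.

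The main obstacle is not a single estimate but the bookkeeping of the Gale-dual reduction together with the exceptional all-ones frieze: one must track how the exception in Proposition~\ref{prop:k=5} interacts with each Gale-dual pair, and verify that the degenerate configuration singled out in Proposition~\ref{prop:237leq0} — all semi-consecutive Pl\"ucker coordinates equal to $1$ while the coordinates in the set $S$ all vanish — either does not arise at a given type (for instance, a width-$\ge 3$ $SL_3$-frieze cannot be everywhere $1$, since a $3\times 3$ diamond of ones has determinant $0\neq 1$) or coincides exactly with the one frieze listed in the theorem as an exception. Once this degenerate-configuration check is carried out, case by case and across each Gale-dual pair, every frieze entry is identified with an almost-consecutive Pl\"ucker coordinate whose positivity is controlled by the lemmas of Section~\ref{sec:positivity}, and the biconditional in each case follows.
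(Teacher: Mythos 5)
Your route is essentially the paper's: the same ingredients (Lemma~\ref{lem:n=k+3}, Lemma~\ref{lem:k=2,3}, Lemma~\ref{lem:k=4}, Proposition~\ref{prop:k=5}, Theorem~\ref{thm:2,10}) assembled case by case, with the Gale dual used to transport one implication at a type $(k,n)$ into the opposite implication at $(n-k,n)$ via the observation that the entries of $F^\G$ are the quiddity entries of $F$. Your packaging of this transport as a single logical equivalence between the biconditionals for $(k,n)$ and $(n-k,n)$ is a tidier way to say what the paper does implicitly.

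However, there is a genuine gap at exactly the step you postpone to the final paragraph, and the check you describe there cannot be carried out as stated. For the backward direction at $(3,8)$ one must exclude the degenerate alternative of Proposition~\ref{prop:237leq0}: all semi-consecutive Pl\"ucker coordinates equal to $1$ while every element of $S$ vanishes. You claim this configuration ``either does not arise at a given type \ldots or coincides exactly with the one frieze listed in the theorem as an exception,'' but neither disjunct applies. The degenerate $(3,8)$ frieze is not the all-ones frieze (so your parenthetical argument about a $3\times 3$ diamond of ones is aimed at the wrong object): for $k=3$, $n=8$ the coordinates $p_{o(m\,m+1\,m+4)}$ and $p_{o(m\,m+1\,m+5)}$ in $S$ are almost consecutive but not semi-consecutive, hence they are entries of the frieze that are not quiddity entries, and in the degenerate configuration they are $0$ while all other entries are $1$. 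Nor is this frieze the listed exception, which is of type $(5,8)$. Worse, the configuration is realizable: the $4$-periodic matrix $A=(e_1,e_2,e_3,v,e_1,e_2,e_3,v)$ with $v=(1,-1,1)^T$ has every consecutive and every semi-consecutive Pl\"ucker coordinate equal to $1$ and every element of $S$ equal to $0$, so $\F_{(3,8)}(A)$ is a tame $SL_3$-frieze of type $(3,8)$ whose four interior rows read $1,0,0,1$ and whose quiddity vectors are all $(1,1)^T$. Thus ``positive quiddity $\Rightarrow$ positive frieze'' fails for this frieze unless it is added as an exception, and no amount of bookkeeping of the Gale-dual pairs will make the deferred check go through. (For what it is worth, the paper's own proof is vulnerable at the same point: the remark that ``the Gale dual of the all ones frieze \ldots is not a positive tiling \ldots thus this case does not come into play'' does not address the issue, since the hypothesis of the backward direction is positivity of the entries of $F^\G$, which the all-ones $(5,8)$ frieze satisfies.) The remaining cases of your argument --- $(1)$, $(3)$, $(4)$, $(5)$, the $k=6$ subcase of $(2)$, and $k=3$ with $n\le 7$ --- do go through as you describe, since at $(4,7)$ Lemma~\ref{lem:k=4} has no exceptional configuration.
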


	\begin{proof}
		We break this into cases.
		\begin{description}

		\item[Case (1) with $k=2, n\leq 9$]
		The forward direction follows from Lemma \ref{lem:k=2,3}.
		We show the backward direction.
		Suppose that the semi-consecutive Pl\"ucker coordinates are all positive.
		Then $F^\G$ is a positive frieze of type $(n-k, n)$, so  by Theorem \ref{thm:2,10} the frieze $F$ is positive as well.
		
		\item[Case (3) with $k=4, n\leq 7$]
		The forward direction follows from Lemma \ref{lem:k=4}.
		The backward direction follows from Lemma \ref{lem:n=k+3}.
		
		\item[Cases (4) and (5) with $k=5, n\leq 8$]
		The backward direction follows from Lemma \ref{lem:n=k+3}.
		The forward direction follows from Proposition \ref{prop:k=5} when $n=8$.
		For $n=7$, suppose that the frieze $F$ is positive.
		Then, by Theorem \ref{thm:2,10}, its Gale dual $F^\G$ is a positive frieze where $k=2$.
		This means that semi-consecutive Pl\"ucker coordinates of $F$ are also positive.  	             Hence the quiddity sequence of $F$ is positive. 
		
		\item[Case (2) with $k=3, n\leq 8$]
		The forward direction follows from Lemma \ref{lem:k=2,3}.
		We prove the backward direction.
		For $n\leq 6$, this follows from Lemma \ref{lem:n=k+3}.
		For $n=7,8$, suppose that the semi-consecutive Pl\"ucker coordinates are all positive.
		Then $F^\G$ is a positive frieze where $n=7$ and $k=4$ or $n=8$ and $k=5$.
		By Cases (3) and (4)-(5), respectively, the resulting frieze has positive semi-consecutive Pl\"ucker coordinates with a single exception, hence its Gale dual has positive entries.
		Thus, the initial frieze $F$ is also positive.
		For the exception in Case (5), note that the Gale dual of the all ones frieze in $\Gr(3,8)$ is not a positive tiling in $\Gr(5,8)$ as the consecutive Pl\"ucker coordinates of the form $p_{o(n\,n+1\,n+5)}=p_{o(n\,n+1\,n+4)}=0$.
		Thus, this case does not come into play here.
		
		\item [Case (2) with $k=6, n\leq 8$]
		The backward direction follows from Lemma \ref{lem:n=k+3}.
		We prove the forward direction.
		Suppose that the frieze $F$ is positive.
		Then $F^\G$ is a positive frieze where $k=2$ by Theorem~\ref{thm:2,10}.
		This means that the semi-consecutive Pl\"ucker coordinates of $F$ are also positive.
		\end{description}
	\end{proof}

We remark that the correspondence of Theorem~\ref{thm:positivitybij} between positive friezes and positive quiddity sequences generally fails for higher values of $n$.  Hence, for general $k$ and $n$, one needs to develop another notion in this higher-dimensional setting that captures positivity.

	As an application, we make a connection to the enumeration of friezes. 
	It was conjectured by Fontaine and Plamondon in \cite{FP} that there are $26952$ positive friezes of type $(3,8)$, which was recently shown by Zhang in \cite{Zhang}.  By Theorem \ref{thm:positivitybij}, positive friezes of type $(3,8)$ are in bijection with the positive friezes of type $(5,8)$ with a single exception in the $(5,8)$ case not appearing in the $(3,8)$ case.  Thus, we immediately obtain the following result.

	\begin{corollary}
		There are $26953$ positive friezes of type $(5,8)$.
	\end{corollary}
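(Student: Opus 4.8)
The plan is to use Theorem~\ref{thm:positivitybij} to convert the problem into the known count of positive friezes of type $(3,8)$ --- there are exactly $26952$ of these by Zhang~\cite{Zhang}, confirming the conjecture of Fontaine and Plamondon~\cite{FP} --- by transporting along the Gale dual. Three facts drive the argument: the Gale dual $F \mapsto F^\G$ of Definition~\ref{def:galetransform} sends friezes of type $(3,8)$ to friezes of type $(5,8)$ and is an involution up to translation by~\cite{mgost13}, hence a bijection on friezes taken up to translation; the entries of $F^\G$ are precisely the entries of the quiddity sequence of $F$ (as observed after Definition~\ref{def:galetransform}), so $F^\G$ is a positive frieze exactly when the quiddity sequence of $F$ is positive, and symmetrically with $F$ and $F^\G$ interchanged; and Theorem~\ref{thm:positivitybij} says that a frieze of type $(3,8)$ is positive if and only if its quiddity sequence is positive, while the same holds for friezes of type $(5,8)$ except for the all-ones frieze $G_0$, which is a positive frieze of type $(5,8)$ whose quiddity sequence is not positive.

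From these I would deduce that the Gale dual restricts to a bijection
\[
\{\text{positive friezes of type }(3,8)\} \;\xrightarrow{\ \sim\ }\; \{\text{positive friezes of type }(5,8)\}\setminus\{G_0\}.
\]
Indeed, if $F$ is a positive frieze of type $(3,8)$, its quiddity sequence is positive by Theorem~\ref{thm:positivitybij}, so $F^\G$ has positive entries and is therefore a positive frieze of type $(5,8)$; moreover $F^\G \ne G_0$, since otherwise $F = (F^\G)^\G = G_0^\G$ up to translation, and the entries of $G_0^\G$ are the entries of the non-positive quiddity sequence of $G_0$, contradicting positivity of $F$. Conversely, if $G$ is a positive frieze of type $(5,8)$ with $G \ne G_0$, then its quiddity sequence is positive by Theorem~\ref{thm:positivitybij}, so $G^\G$ has positive entries and is a positive frieze of type $(3,8)$; and $(G^\G)^\G = G$ up to translation, so $G$ lies in the image. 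The two maps are mutually inverse, so the displayed map is a bijection, and consequently the number of positive friezes of type $(5,8)$ is one more than the number of positive friezes of type $(3,8)$, i.e.\ $26952 + 1 = 26953$.

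The one point requiring care is the bookkeeping for $G_0$: the argument uses both halves of the exception clause of Theorem~\ref{thm:positivitybij}(5), namely that $G_0$ is genuinely a positive frieze of type $(5,8)$, so that it contributes to the count, and that its quiddity sequence fails to be positive, so that (via the Gale dual) it does not arise from any positive frieze of type $(3,8)$ and hence is not counted twice. Everything else is a formal consequence of the bijectivity of the Gale dual and the identification of its entries with the quiddity sequence.
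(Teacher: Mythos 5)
Your proposal is correct and follows essentially the same route as the paper: both transport Zhang's count of $26952$ positive friezes of type $(3,8)$ through the Gale dual, using Theorem~\ref{thm:positivitybij} to identify positivity of a frieze with positivity of its quiddity sequence on each side, and adding one for the all-ones frieze of type $(5,8)$ excluded from the correspondence. Your write-up merely makes explicit the bookkeeping for the exceptional frieze that the paper leaves implicit.
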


\bibliographystyle{alpha}
\bibliography{SLk tilings}

\newcommand{\etalchar}[1]{$^{#1}$}
\begin{thebibliography}{MGOST14}

\bibitem[Art96]{a57}
Emil Artin.
\newblock {\em Alg\`ebre g\'{e}om\'{e}trique}.
\newblock Les Grands Classiques Gauthier-Villars. [Gauthier-Villars Great
  Classics]. \'{E}ditions Jacques Gabay, Paris, french edition, 1996.
\newblock Translated from the 1957 English original by M. Lazard, Edited and
  with a foreword by G. Julia.

\bibitem[BFG{\etalchar{+}}21]{bfgst21}
Karin Baur, Eleonore Faber, Sira Gratz, Khrystyna Serhiyenko, and Gordana
  Todorov.
\newblock Friezes satisfying higher {${SL}_k$}-determinants.
\newblock {\em Algebra Number Theory}, 15(1):29--68, 2021.

\bibitem[BM09]{BM09}
Karin Baur and Bethany Marsh.
\newblock Frieze patterns for punctured discs.
\newblock {\em J. Algebraic Combin.}, 30(3):349--379, 2009.

\bibitem[BPT16]{BPT16}
Karin Baur, Mark Parsons, and Manuela Tschabold.
\newblock Infinite friezes.
\newblock {\em European J. Combin.}, 54:220--237, 2016.

\bibitem[BR10]{br10}
Fran\c{c}ois Bergeron and Christophe Reutenauer.
\newblock {$SL_k$}-tilings of the plane.
\newblock {\em Illinois J. Math.}, 54(1):263--300, 2010.

\bibitem[CC73a]{cc73}
J.~H. Conway and H.~S.~M. Coxeter.
\newblock Triangulated polygons and frieze patterns.
\newblock {\em The Mathematical Gazette}, 57(400):87--94, 1973.

\bibitem[CC73b]{cc732}
J.~H. Conway and H.~S.~M. Coxeter.
\newblock Triangulated polygons and frieze patterns (continued).
\newblock {\em The Mathematical Gazette}, 57(401):175--183, 1973.

\bibitem[CC06]{cc06}
Philippe Caldero and Fr\'{e}d\'{e}ric Chapoton.
\newblock Cluster algebras as {H}all algebras of quiver representations.
\newblock {\em Comment. Math. Helv.}, 81(3):595--616, 2006.

\bibitem[CHJ24]{CHJ}
Michael Cuntz, Thorsten Holm, and Peter Jorgensen.
\newblock Noncommutative frieze patterns with coeﬃcients.
\newblock {\em arXiv 2403.09156}, 2024.

\bibitem[CJ21]{CJ21}
Ilke Canakci and Peter Jorgensen.
\newblock Friezes, weak friezes, and {T}-paths.
\newblock {\em Adv. in Appl. Math.}, 131:Paper No. 102253, 19, 2021.

\bibitem[CR72]{cr72}
Craig~M. Cordes and D.~P. Roselle.
\newblock Generalized frieze patterns.
\newblock {\em Duke Math. J.}, 39:637--648, 1972.

\bibitem[Cun17]{c15}
Michael Cuntz.
\newblock On wild frieze patterns.
\newblock {\em Exp. Math.}, 26(3):342--348, 2017.

\bibitem[Cun23]{c23}
Michael Cuntz.
\newblock Grassmannians over rings and subpolygons.
\newblock {\em Int. Math. Res. Not. IMRN}, 1(9):8078--8099, 2023.

\bibitem[DM22]{dm22}
Roi Docampo and Greg Muller.
\newblock Juggler's friezes.
\newblock {\em arXiv 2208.09025}, 2022.

\bibitem[FKST23]{fkst23}
Anna Felikson, Oleg Karpenkov, Khrystyna Serhiyenko, and Pavel Tumarkin.
\newblock {$3$}{D} {F}arey graph, lambda lengths and {$SL_2$}-tilings.
\newblock {\em arXiv 2306.17118}, 2023.

\bibitem[FP16]{FP}
Bruce Fontaine and Pierre-Guy Plamondon.
\newblock Counting friezes in type {$D_n$}.
\newblock {\em J. Algebraic Combin.}, 44(2):433--445, 2016.

\bibitem[FS21]{FL21}
Sergey Fomin and Linus Setiabrata.
\newblock Heronian friezes.
\newblock {\em Int. Math. Res. Not. IMRN}, (1):651--697, 2021.

\bibitem[FZ02]{fz02}
Sergey Fomin and Andrei Zelevinsky.
\newblock Cluster algebras. {I}. {F}oundations.
\newblock {\em J. Amer. Math. Soc.}, 15(2):497--529, 2002.

\bibitem[HJ20]{HJ20}
Thorsten Holm and Peter Jorgensen.
\newblock A {$p$}-angulated generalisation of {C}onway and {C}oxeter's theorem
  on frieze patterns.
\newblock {\em Int. Math. Res. Not. IMRN}, (1):71--90, 2020.

\bibitem[MG15]{mg15}
Sophie Morier-Genoud.
\newblock Coxeter's frieze patterns at the crossroads of algebra, geometry and
  combinatorics.
\newblock {\em Bull. Lond. Math. Soc.}, 47(6):895--938, 2015.

\bibitem[MGOST14]{mgost13}
Sophie Morier-Genoud, Valentin Ovsienko, Richard~Evan Schwartz, and Serge
  Tabachnikov.
\newblock Linear difference equations, frieze patterns, and the combinatorial
  {G}ale transform.
\newblock {\em Forum Math. Sigma}, 2:Paper No. e22, 45, 2014.

\bibitem[MGOT15]{MGOT15}
Sophie Morier-Genoud, Valentin Ovsienko, and Serge Tabachnikov.
\newblock Introducing supersymmetric frieze patterns and linear difference
  operators.
\newblock {\em Math. Z.}, 281(3-4):1061--1087, 2015.
\newblock With an appendix by Alexey Ustinov.

\bibitem[Ovs18]{o18}
Valentin Ovsienko.
\newblock Partitions of unity in {${SL}_2(\mathbb{Z})$}, negative continued
  fractions, and dissections of polygons.
\newblock {\em Res. Math. Sci.}, 5(2):Paper No. 21, 25, 2018.

\bibitem[Sco06]{s03}
Jeanne~S. Scott.
\newblock Grassmannians and cluster algebras.
\newblock {\em Proc. London Math. Soc. (3)}, 92(2):345--380, 2006.

\bibitem[Sho23]{s22}
Ian Short.
\newblock Classifying {${SL}_2$}-tilings.
\newblock {\em Trans. Amer. Math. Soc.}, 376(1):1--38, 2023.

\bibitem[SSZ24]{svsz24}
Ian Short, Matty~Van Son, and Andrei Zabolotskii.
\newblock Frieze patterns and {F}arey complexes.
\newblock {\em arXiv 2312.12953}, 2024.

\bibitem[Zha25]{Zhang}
Robin Zhang.
\newblock Diophantine enumeration of dynkin friezes.
\newblock {\em arXiv:2503.08800}, 2025.

\end{thebibliography}
\end{document}